\newcommand{\tdens}{\tau}
\newcommand{\edens}{\varepsilon}
\newcommand{\bT}{{\bar\tau}}
\DeclareMathOperator{\Tr}{Tr}
\newcommand{\R}{\mathbb{R}}
\newcommand{\calG}{\mathcal{G}}
\newcommand{\calO}{\mathcal{O}}
\newcommand{\be}{\begin{equation}}
\newcommand{\ee}{\end{equation}}
\newcommand{\inr}[2]{\langle #1, #2 \rangle}
\newtheorem{lemma}{Lemma}
\newtheorem{theorem}[lemma]{Theorem}
\newtheorem{corollary}[lemma]{Corollary}
\newtheorem{proposition}[lemma]{Proposition}
\newtheorem{remark}[lemma]{Remark}
\title{Typical large graphs with given edge and triangle densities}
\date{\today}
\author{
Joe Neeman
\and Charles Radin
\and Lorenzo Sadun
}
\address{Joe Neeman\\Department of Mathematics\\The University of
  Texas at Austin\\ Austin, TX 78712} \email{joeneeman@gmail.com}
\address{Charles Radin\\Department of Mathematics\\The University of
  Texas at Austin\\ Austin, TX 78712} \email{radin@math.utexas.edu}
\address{Lorenzo Sadun\\Department of Mathematics\\The University of
  Texas at Austin\\ Austin, TX 78712} \email{sadun@math.utexas.edu}
\thanks{
This work was partially supported by the Deutsche Forschungsgemeinschaft (DFG, German Research
Foundation) under Germany's Excellence Strategy – EXC-2047/1 – 390685813,
and by a fellowship from the Alfred P. Sloan Foundation.
}
\begin{document}

\begin{abstract}
The analysis of large simple graphs
with extreme values of the densities of edges and triangles has been
extended to the statistical structure of typical graphs of fixed
intermediate densities, by the use of large deviations of
Erd\H{o}s-R\'enyi graphs. We prove that the typical graph exhibits
sharp singularities as the constraining densities vary between different curves of
extreme values, and we determine the precise nature of the
singularities. The extension to graphs with fixed densities of edges
and $k$-cycles for odd $k>3$ is straightforward and we note the simple changes in
the proof.
 
\end{abstract}

\maketitle

\section{Introduction}\label{sec:Intro}

Our results concern the nature of simple graphs on $n$ vertices, for
large $n$, constrained to have density $\edens$ of edges and $\tdens$
of triangles. The range of achievable values of the pair $(\edens,\tdens)$ was an
old problem in extremal combinatorics initiated by Tur\'an in 1941
\cite{Tur}. The extremal graph theory of these constraints was
recently completed by Razborov {\em et al} in \cite{Raz, PR}, which also
contain a good history of this problem; see
Figure~\ref{FIG:phase_space}. The graphs associated with some parts of
the boundary of this region are not unique, but it is not
difficult to characterize those probabilistically using the graphon
formalism of Borgs et al.~\cite{BCL,BCLSV} and Lov\'{a}sz et
al.~\cite{LS1,LS2,LS3,Lov}. See Section 4 in~\cite{RS1} for a
discussion of those extremal graphs in terms relevant to this work.

The boundary of the parameter space
depicted in Figure~\ref{FIG:phase_space} falls naturally into three
curves: the upper boundary $\tdens=\edens^{3/2}$, 
the line segment on which $\tdens=0$,
and the scalloped curve completed by Razborov {\em et al}. On the 
upper boundary $\tdens>\edens^3$, while on the latter two curves
$\tdens<\edens^3$. 
The nature of the graphs associated with the points
on each curve is similar, but those associated with different curves
are not~\cite{RS1}.

\begin{figure}[ht]
\centering
\includegraphics[angle=0,width=0.8\textwidth]{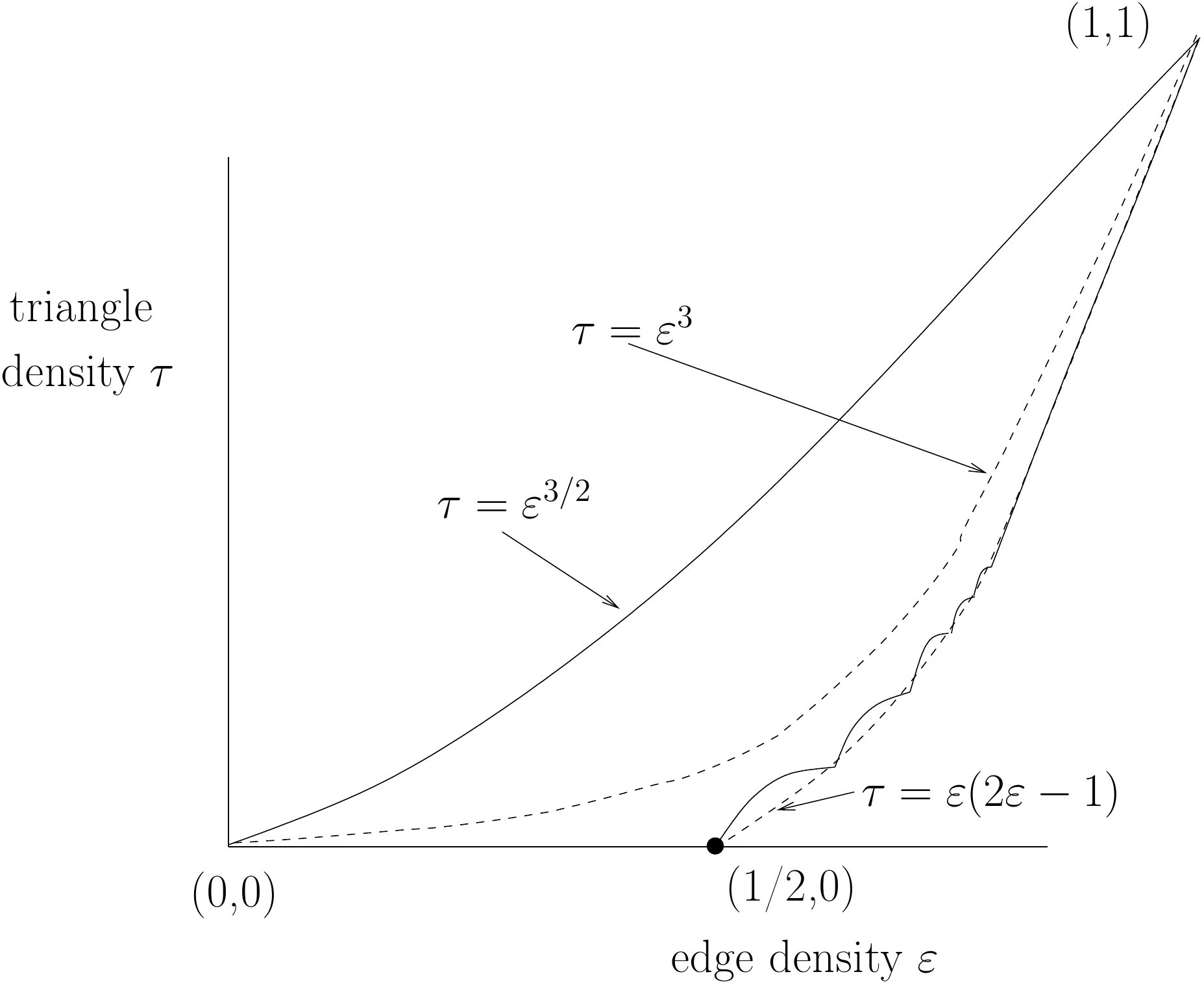}
\caption{The boundary of the achievable parameters  is in
  solid lines. The figure is distorted to expose features.}
\label{FIG:phase_space}
\end{figure}

\begin{figure}[ht]
\centering
\includegraphics[angle=0,width=0.8\textwidth]{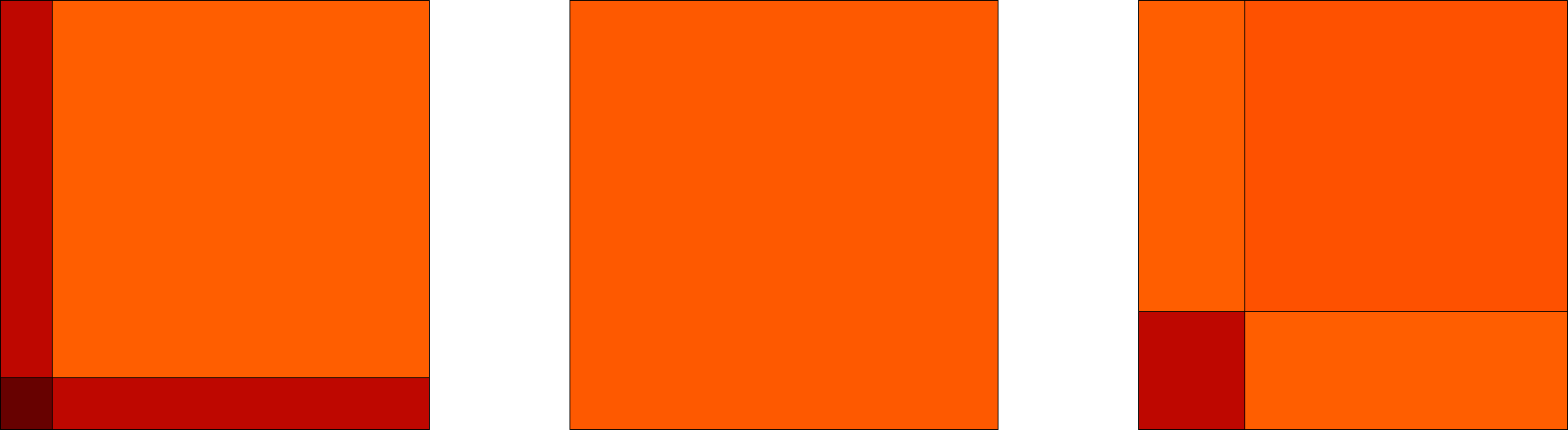}
\caption{Optimal graphons (not to scale) at $\edens > \frac 12$ and at $\tdens = \edens^3 + \delta^3$ on the left,
  $\tdens=\edens^3$ in the middle, and
  $\tdens=\edens^3 - \delta^3$  on the right. On the left, the strip at the bottom has height $O(\delta^3)$; on the right it has height $O(\delta)$.}
\label{FIG:graphons}
\end{figure}

In this paper we analyze the statistical
structure of `typical' graphs with
constraints on $(\edens,\tdens)$ in the {\em interior} of the
parameter space of Figure~\ref{FIG:phase_space}.  We use the graphon
formalism to describe asymptotic probabilistic structure, and use
the rate function for certain large deviations of Erd\H{o}s-R\'enyi
graphs to interpret typicality, a notion central to our analysis.  We
give a careful discussion of typicality in
Section~\ref{sec:asymptotics}, but informally it means `all but exponentially
few' graphs with the given density constraints, exponential in the
number of vertices of the system.

Given the known graphons associated with the boundary, we focus on how
a typical density-constrained graph changes as the densities
$(\edens,\tdens)$ move from one of the three basic components of the
boundary to another, in particular as $(\edens,\tdens)$ moves between
$\tdens=\edens^{3/2}$ and each of the other two curves: the line
segment and the scallops. The statistical structure of a graph typical for
constraints $(\edens,\tdens)$ is easily computed from the
`entropy-optimal graphon' associated by large deviations theory with $(\edens,\tdens)$, but we
emphasize that as an element in any such study one must establish
values of $(\edens,\tdens)$ at which there is a {\em unique}
entropy-optimal graphon, since without uniqueness one cannot really
speak of `typical' behavior associated with $(\edens,\tdens)$. In fact,
we have found that proving such uniqueness has been the most difficult
part of the analysis, requiring stringent a priori knowledge of the
possible optimal graphons. (There is numerical evidence of constraints
$(\edens,\tdens)$ supporting multiple entropy-optimal graphons; see
for instance the discussion of discontinuous transitions
in~\cite{KRRS3}. There is also a proof, Theorem 5.1 in \cite{KRRS}, of
nonunique entropy-optimal graphons in the related model with fixed
densities of edges and 2-stars.)

Our main result is the explicit determination, for any fixed
$1/2<\edens<1$, of the unique entropy-optimizing graphon associated
with $(\edens,\tdens)$ as $\tdens$ crosses through
$\tdens=\edens^3$, and the corollary that the behavior is singular at
$\tdens=\edens^3$. For a qualitative picture of the singularity see
Figure~\ref{FIG:graphons}.
In terms of large graphs, for these values of $(\edens, \tdens)$ we
approximately determine the number of graphs with edge density approximately
$\edens$ and triangle density approximately $\tdens$, and we show that most
such graphs have a specific structure. For more details
about the connection between entropy-optimizing graphons and large graphs,
see Section~\ref{sec:asymptotics}.

To describe our results more quantitatively we need some notation
(with more detail in Section~\ref{sec:asymptotics}). A
graphon $g(x,y)$ on $[0,1]\times [0,1]$ is called {\em bipodal} if
there is a decomposition of $[0,1]$ into 2 intervals (`vertex
clusters') $C_1$ and $C_2$ and constants $a,b,d$ such that
\begin{eqnarray}\label{def:abd}
g(x,y)&=a & \hbox{if }(x,y)\in C_1\times C_1\cr
g(x,y)&=b & \hbox{if }(x,y)\in C_2\times C_2 \cr 
g(x,y)&=d & \hbox{if }(x,y)\in C_1\times C_2 \hbox{ or }(x,y)\in C_2\times C_1.
\end{eqnarray}
We denote the length of $C_1$ by $c$.

It is immediate that graphs with independent edges satisfy
$\tdens=\edens^3$; see Figure~\ref{FIG:phase_space}. It was previously
proven \cite{KRRS2} for $0<\edens<1/2$ and for $1/2 < \edens <1$, that
the entropy-optimal graphon for $\tdens$ slightly greater than
$\edens^3$ is unique and bipodal and the structure was determined. In
this paper we consider the more difficult case of the graphons with
$\tdens$ slightly less than $\edens^3$. For $1/2 < \edens <1$ we prove
that it again is unique and bipodal, and again determine its
structure. We also determine the asymptotic behavior of the entropy as
$(\edens,\tdens)$ approaches the curve $\tdens=\edens^3,\
1/2<\edens<1$ from above and from below.  Our main results can be
summarized as follows, using the function:

\begin{equation}\label{eq:H-def}
H(p) = -[p \ln(p) + (1-p) \ln(1-p)].
\end{equation}

\begin{theorem}\label{thm:b11}
There is an open subset ${\calO}_1$ in the planar set of achievable parameters $(\edens,\tdens)$,
whose upper boundary is the curve $\tdens = \edens^3,\  1/2<\edens<1$, such that
at $(\edens,\tdens)$ in ${\calO}_1$ there is a unique entropy-optimizing
graphon $g_{(\edens,\tdens)}$. This graphon is bipodal and for fixed $(\edens,\tdens)=(e,e^3-\delta^3)$, the values of 
$a,b,c,d$ can be approximated to arbitrary accuracy via an explicit iterative scheme. These parameters can also be expressed via asymptotic power series in $\delta$ whose leading terms
are:
\begin{eqnarray}\label{approximation:b11-abcd}
a & = & 1-e - \delta + O(\delta^2) \cr 
b & = & e - \frac{\delta^2}{2e-1} + O(\delta^3) \cr 
c & = & \frac{\delta}{2e-1} - \frac{2\delta^2}{2e-1} + O(\delta^3) \cr 
d & = & e + \delta + \frac{\delta^2}{eH'(e)} \left ( H'(e) - \left (e-\frac12 \right ) H''(e) \right ) + O(\delta^3). \end{eqnarray}

\end{theorem}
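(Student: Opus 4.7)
The plan is to write the Euler--Lagrange equations for the constrained entropy maximization, reduce to bipodal graphons, construct the asymptotic expansion iteratively, and prove uniqueness via an implicit function argument.

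First, variational calculus on $\max_g \int\int H(g(x,y))\,dx\,dy$ subject to fixed edge and triangle densities gives the stationarity condition
\[
H'(g(x,y)) \;=\; \lambda + 3\mu \int_0^1 g(x,z)\, g(y,z)\, dz,
\]
where $\mu>0$ because $\tdens<e^3$. Strict concavity of $H$ implies that the constant graphon $g\equiv e$ is the unique optimizer on the curve $\tdens=e^3$, so any optimizer for $(e,e^3-\delta^3)$ with $\delta$ small lies close to $g\equiv e$ in cut norm.

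Next, I would reduce to bipodal graphons. The right-hand side of the EL equation depends on $g$ only through the smoother kernel $u(x,y)=\int g(x,z)g(y,z)\,dz$; writing $g=e+\phi$ and expanding gives $u(x,y)\approx e^2 + e(\bar\phi(x)+\bar\phi(y))$ with $\bar\phi(x)=\int\phi(x,y)\,dy$, so after inverting $H'$ the leading correction to $\phi(x,y)$ has the rank-two form $f(x)+f(y)$. Combined with a rigidity argument (any step-function critical point close enough to $g\equiv e$ takes at most two values per fiber), this forces $g$ to be bipodal. I expect promoting this approximate low-rank structure to exact bipodality for the nonlinear problem, and in particular ruling out multipodal critical points, to be the main obstacle.

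Once bipodality is in hand, parametrize by $(a,b,c,d)$ so that
\begin{align*}
S &= c^2 H(a) + (1-c)^2 H(b) + 2c(1-c) H(d),\\
E &= c^2 a + (1-c)^2 b + 2c(1-c) d,\\
T &= c^3 a^3 + (1-c)^3 b^3 + 3c^2(1-c)\, a\, d^2 + 3c(1-c)^2\, b\, d^2.
\end{align*}
The EL equations become the three algebraic relations
$H'(a) = \lambda + 3\mu(c a^2 + (1-c) d^2)$,
$H'(b) = \lambda + 3\mu((1-c) b^2 + c d^2)$,
$H'(d) = \lambda + 3\mu\, d(c a + (1-c) b)$,
together with $\partial_c(S-\lambda E-\mu T)=0$. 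At $\delta=0$ the only solutions form the degenerate family $a=b=d=e$ with $c$ arbitrary. Inserting the stated ansatz into $E=e$ and collecting terms of order $\delta^2$ yields the balance $c^2(2e-1) + \delta^2/(2e-1) = 2c\delta + O(\delta^3)$, which factors as $(c(2e-1)-\delta)^2 = O(\delta^3)$ and so forces $c=\delta/(2e-1)+O(\delta^2)$; this is where $e>1/2$ is needed, so that $c>0$. Each higher-order coefficient is then determined by linearizing the full EL-plus-constraint system around the current approximation and solving a linear system --- the explicit iterative scheme promised in the statement.

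Finally, for uniqueness I would apply the implicit function theorem to the six-equation system in $(a,b,c,d,\lambda,\mu)$ as a function of $\delta$. The Jacobian degenerates at $\delta=0$ because $c$ is free there; after the blow-up $c=\delta\gamma$ the rescaled Jacobian is invertible, with its determinant carrying a nonvanishing factor of $(2e-1)$ throughout $1/2<e<1$. This produces a unique smooth branch of bipodal critical points matching the ansatz, and combined with the bipodal reduction it yields uniqueness of the entropy-optimizer and the stated asymptotics in an open neighborhood $\calO_1$ of the curve $\tdens=e^3$, $1/2<e<1$.
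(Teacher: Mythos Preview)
Your Euler--Lagrange route has a genuine gap at precisely the step you flag as ``the main obstacle,'' and the difficulty is structural rather than technical. Two problems compound each other. First, the triangle Lagrange multiplier $\mu$ diverges like $\delta^{-1}$ here (the paper notes this explicitly; it is visible from $\partial s/\partial\tau \sim \delta^{-1}$ below the curve), so the EL equation is not a small perturbation of $H'(g)=\lambda$ and your expansion around the constant graphon is not controlled. Second, and more seriously, your linearization $u(x,y)\approx e^2 + e(\bar\phi(x)+\bar\phi(y))$ captures nothing: for the optimizer the degree function satisfies $\int (d(x)-e)^2\,dx = O(\delta^4)$, so $\bar\phi$ is essentially zero and your rank-two additive correction $f(x)+f(y)$ vanishes to leading order. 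The actual structure of $\Delta g$ is rank-one \emph{multiplicative}, $\Delta g(x,y)\approx -v(x)v(y)$, and it enters $u(x,y)$ only through the quadratic term $\int\phi(x,z)\phi(y,z)\,dz$ that your expansion discards. Without that term you cannot see the bipodal structure at all, and with it the EL equation becomes a nonlinear eigenvalue problem for which your ``rigidity argument'' sketch gives no traction.

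The paper avoids EL entirely for the bipodality step. It first proves a sharp entropy--$L^2$ tradeoff showing $\|\Delta g\|_2^2 = \delta^2 + O(\delta^3)$, then uses eigenvalue inequalities for $T_{\Delta g}$ to force $\Delta g$ within $O(\delta^{3/2})$ of rank one in $L^2$, and separately shows via a pointwise entropy-cost argument that $g$ concentrates near the two values $e$ and $1-e$. These combine into an approximate bipodal structure (Proposition~4). Exact bipodality is then obtained by a competitor argument: average $g$ on the two podes, estimate the resulting changes in entropy and triangle density, and show that if $g$ were not already bipodal one could perturb the averaged graphon (by growing the small pode) to recover the triangle constraint with strictly higher entropy. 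Your calculus-based derivation of the parameters $(a,b,c,d)$ once bipodality is known is in the right spirit, though the paper parametrizes by $(a,\mu)$ with $\mu = c\Delta a/(1-c)+\Delta d$ rather than by Lagrange multipliers, which sidesteps the $\mu\sim\delta^{-1}$ blow-up and makes the Hessian analysis for uniqueness clean.
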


\begin{theorem}\label{thm:f11}
There is an open subset ${\calO}_2$ in the planar set of achievable parameters $(\edens,\tdens)$,
whose lower boundary is the curve $\tdens = \edens^3,\  1/2<\edens<1$, such that
at each $(\edens,\tdens)$ in ${\calO}_2$ there is a unique entropy-optimizing
graphon $g_{(\edens,\tdens)}$. This graphon is bipodal and for fixed $(\edens,\tdens)=(e,e^3+\Delta \tau)$ the values of 
$a,b,c,d$ can be approximated to arbitrary accuracy via an explicit iterative scheme. These parameters can also be expressed via asymptotic power series in $\Delta \tau$ whose leading terms are:
\begin{eqnarray}\label{approximation:f11-abcd}
a & = & a_0 + O(\Delta \tau) \cr 
b & = & e - \frac{2 \Delta \tau}{3e(2e-1)} + O(\Delta \tau^2) \cr 
c & = & \frac{\Delta \tau}{3e(2e-1)^2} + O(\Delta \tau^2) \cr 
d & = &  1-e + O(\Delta \tau), 
\end{eqnarray}
where $a_0$ is the solution to 
\begin{equation}\label{a-equ}
H'(a_0) = \left ( 1 - \frac{2}{e} \right ) H'(e).
\end{equation}
\end{theorem}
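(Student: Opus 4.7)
The plan is to follow the template of the proof of Theorem~\ref{thm:b11}: reduce to bipodal graphons, write down the Lagrange equations, and expand asymptotically about $c=0$. The main novelty here is the limiting geometry visible in (\ref{approximation:f11-abcd}), in which the small cluster has density $a_0$ bounded away from $e$ and the cross-block value $d$ approaches $1-e$ rather than $e$. First I would invoke the a priori bipodality result available from earlier in the paper to reduce the functional problem to maximizing
\[
S(a,b,c,d) = c^2 H(a) + 2c(1-c) H(d) + (1-c)^2 H(b)
\]
subject to
\begin{align*}
e &= c^2 a + 2c(1-c)\,d + (1-c)^2 b,\\
\tdens &= c^3 a^3 + 3c^2(1-c)\,a d^2 + 3c(1-c)^2\,b d^2 + (1-c)^3 b^3.
\end{align*}
The Lagrange equations (with multipliers $\alpha,\beta$) read
\begin{align*}
H'(a) &= \alpha + 3\beta[c a^2 + (1-c) d^2],\\
H'(b) &= \alpha + 3\beta[c d^2 + (1-c) b^2],\\
H'(d) &= \alpha + 3\beta\,d[c a + (1-c) b],
\end{align*}
together with stationarity in $c$.

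At $c=0$ the edge constraint forces $b_0=e$, while the $c$-stationarity equation reduces to the trapezoid identity $(H(b_0)-H(d_0))/(b_0-d_0)=(H'(b_0)+H'(d_0))/2$; combined with $b_0=e$ and the symmetries $H(1-e)=H(e)$, $H'(1-e)=-H'(e)$, this admits $d_0=1-e$. Subtracting the second Lagrange equation from the third gives $3\beta=2H'(e)/[e(2e-1)]$, and subtracting the first from the second then yields $H'(a_0)-H'(e)=-2H'(e)/e$, which is exactly (\ref{a-equ}); monotonicity of $H'$ gives a unique $a_0\in(0,\tfrac12)$. To promote this leading solution to an asymptotic expansion I would substitute $a=a_0+a_1\Delta\tau+\cdots$, $b=e+b_1\Delta\tau+\cdots$, $c=c_1\Delta\tau+\cdots$, $d=(1-e)+d_1\Delta\tau+\cdots$ (plus analogous expansions for $\alpha,\beta$) into all six equations and solve order by order. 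At first order the linear system arising from the two density constraints has determinant proportional to $e(2e-1)^2$, nonvanishing on $(\tfrac12,1)$, and produces $c_1=1/[3e(2e-1)^2]$ together with the other first-order coefficients listed in (\ref{approximation:f11-abcd}); higher orders follow from the same linearised system, yielding the ``explicit iterative scheme'' claimed in the theorem.

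The main obstacle is uniqueness, because the system is genuinely singular at $\Delta\tau=0$: the constant graphon $g\equiv e$ (equivalently, any bipodal graphon with $a=b=d=e$ and arbitrary $c$) satisfies the Lagrange equations at $(\edens,\tdens)=(e,e^3)$, so the implicit function theorem does not apply directly to the map from $(a,b,c,d,\alpha,\beta)$ to the residuals at $\Delta\tau=0$. To get around this I would blow up the singularity by introducing rescaled coordinates $\tilde c=c/\Delta\tau$, $\tilde b=(b-e)/\Delta\tau$, $\tilde d=(d-(1-e))/\Delta\tau$ (leaving $a$ unscaled), in which the rescaled Lagrange-plus-constraint system becomes a regular perturbation at $\Delta\tau=0$ of a nonsingular algebraic system whose unique solution is the leading configuration $(a_0,e,0,1-e)$ together with the first-order coefficients computed above. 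A contraction-mapping argument in these blown-up coordinates then simultaneously yields convergence of the iterative scheme and uniqueness of the bipodal critical point in a neighborhood; combined with the a priori bipodality reduction, this gives uniqueness of the entropy optimizer in $\calO_2$. The delicate part is the bookkeeping of which Lagrange and constraint residuals are $O(\Delta\tau)$, $O(\Delta\tau^2)$, $\ldots$ after the rescaling, and in particular ruling out spurious critical branches — for instance, ones on which $c$ scales as a different power of $\Delta\tau$ — that would otherwise give distinct candidate optimizers.
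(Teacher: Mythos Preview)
Your proposal is sound and lands on the same two key identities as the paper --- the trapezoid relation $(H(b_0)-H(d_0))/(b_0-d_0)=(H'(b_0)+H'(d_0))/2$ forcing $d_0=1-e$, and the subtracted Lagrange equations yielding $H'(a_0)=(1-2/e)H'(e)$ --- but the surrounding framework differs.  The paper does not introduce Lagrange multipliers at all: it uses the edge constraint to eliminate $b$ and the triangle constraint to eliminate $c$, reducing to an unconstrained two-variable maximization in $(a,d)$, and then sets $\partial_a S=\partial_d S=0$ directly.  Solving $\partial_d S=0$ to order $\Delta\tau$ gives exactly your trapezoid identity and hence $d=1-e+O(\Delta\tau)$; feeding this into $\partial_a S=0$ at order $\Delta\tau^2$ gives the equation for $a_0$.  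For the iterative scheme and local uniqueness the paper computes the $2\times 2$ Hessian of $S$ in $(a,d)$ explicitly, shows it equals a fixed nonsingular matrix $M_0$ up to lower-order corrections, and runs the fixed-matrix iteration $(a,d)_{\rm new}=(a,d)_{\rm old}-M_0^{-1}\nabla S$.  Your blow-up $\tilde c=c/\Delta\tau$, $\tilde b=(b-e)/\Delta\tau$, $\tilde d=(d-(1-e))/\Delta\tau$ plus contraction mapping achieves the same end more abstractly; the paper's route is more elementary because the two-variable reduction sidesteps the $6\times 6$ system and the limiting behavior of the multipliers.

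One place where you are doing more work than needed: the paper does not itself rule out spurious critical branches with different scalings of $c$, because the cited result \cite{KRRS2} already furnishes not just bipodality but also the $o(1)$ parameter estimates $a=a_0+o(1)$, $d=1-e+o(1)$, $c=o(1)$ for \emph{the} optimizer.  All that remains for the present paper is to sharpen $o(\cdot)$ to $O(\cdot)$ and to show that the local scheme converges --- global uniqueness is inherited, not proved here.
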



\begin{theorem}\label{thm:entropy}

The entropy function $s(\edens,\tdens)$ is real analytic in the variables $\edens$ and
$\tdens$ in the two subsets ${\calO}_2$ and ${\calO}_1$ of Theorems \ref{thm:b11} and \ref{thm:f11}, 
 which share the boundary curve $\tdens=\edens^3$. 
The entropy $s(\edens,\tdens)$ is continuous on their common boundary but is
not differentiable at any point of the curve. 
As $\tdens$ approaches $\edens^3$ from above and below, 
the following estimates apply:
\begin{itemize}
\item For fixed $\edens=e \ne 1/2$, as $\tdens=e^3+\Delta \tau$ 
approaches $e^3$ from above, 
\begin{eqnarray}
s(e,e^3\!+\!\Delta \tau) & = & H(e) - \frac{2\Delta \tau}{3e(1-2e)} H'(e)  \cr
&&+ \frac{\Delta
  \tau^2}{9e^2(1\!-\!2e)^4}
\left ( H(a_0) \!-\! H(e) \!+\! H'(e) \left ( 3(a_0\!-\!e) +\frac{2(1\!-\!2e)}{e}(a_0\!+\!3e\!-\!2)
\right ) \right )\cr
&& + \frac{2 \Delta \tau^2 H''(e)}{9e^2(1-2e)^2} + O(\Delta \tau^3). 
\end{eqnarray}
\item For fixed $\edens= e \in (1/2,1)$, as $\tdens = e^3 - \delta^3$ approaches $e^3$ from below, 
\begin{eqnarray}
s(e,e^3-\delta^3) &=&  H(e) + \frac{\delta^2}{2e-1}H'(e) - \frac{2\delta^3 \nu }{(2e-1)^2} \cr 
&& + \delta^4 \left ( \frac{H'''(e)}{3(2e-1)} + \frac{4 \nu}{(2e-1)^3}
  - \frac{2\nu^2}{e H'(e) (2e-1)^2} \right )\cr
&& + O(\delta^5),
\end{eqnarray}
where $\nu = H'(e) - \left ( e - \frac12 \right ) H''(e).$ 
\end{itemize}
In particular, $\partial s/\partial \tdens$ diverges as $\delta^{-1}$ as $\tdens$ approaches 
$e^3$ from below, as previously shown in \cite{RS2}. As $\tdens$ approaches $e^3$ 
from above, $\partial s/\partial \tdens$ does {\em not} diverge, instead approaching 
the finite (negative) value ${2H'(e)}/{3e(2e-1)}$. 
Furthermore, the second derivative $\partial^2 s/\partial \tdens^2$ is
negative. 
\end{theorem}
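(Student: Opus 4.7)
The plan is to turn the whole theorem into a substitution-and-Taylor-expansion exercise using the bipodal parametrization of Theorems \ref{thm:b11} and \ref{thm:f11}. For a bipodal graphon with parameters $(a,b,c,d)$ and cluster sizes $c,\,1-c$, the entropy integrates blockwise to
\[
s(\edens,\tdens) \;=\; c^{2}H(a) + 2c(1-c)H(d) + (1-c)^{2}H(b),
\]
so once $(a,b,c,d)$ are known as functions of $(\edens,\tdens)$ the entropy is explicit. Theorems \ref{thm:b11} and \ref{thm:f11} provide these functions as fixed points of a contractive iterative scheme whose leading orders are displayed in \eqref{approximation:b11-abcd} and \eqref{approximation:f11-abcd}. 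Running the iteration one or two steps beyond what is stated supplies the higher-order coefficients needed here.

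\textbf{Real analyticity on $\calO_{1}$ and $\calO_{2}$.} The iterative scheme is equivalent to solving an implicit system --- the stationarity conditions for the bipodal ansatz together with the edge and triangle density constraints --- whose Jacobian was already shown to be non-degenerate in the uniqueness arguments for Theorems \ref{thm:b11} and \ref{thm:f11}. The analytic implicit function theorem then yields $(a,b,c,d)$, and hence $s$, real analytic in $(\edens,\tdens)$ throughout each region.

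\textbf{Asymptotics, continuity, and first-derivative behavior.} Substitute the Theorem \ref{thm:b11} parameters into the bipodal entropy formula and expand $H(a), H(b), H(d)$ in Taylor series around $1-e$, $e$, $e$ respectively. The block $c^{2}H(a)$ enters at order $\delta^{2}$ while the cross and large blocks furnish corrections whose cancellations produce the series involving $\nu = H'(e)-(e-\tfrac12)H''(e)$. The same procedure applied to Theorem \ref{thm:f11} is easier, since $c = O(\Delta\tau)$ while $a \to a_{0}$ is $O(1)$, so $c^{2}H(a_{0})$ enters only at order $\Delta\tau^{2}$ and the linear term arises entirely from $(1-c)^{2}H(b)$ via $H(b) = H(e)+H'(e)(b-e)+\cdots$. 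Both one-sided expansions have leading term $H(e)$, giving continuity across $\tdens=e^{3}$. The chain rule applied to $\tdens = e^{3}-\delta^{3}$ converts the below-curve series into
\[
\frac{\partial s}{\partial \tdens} \;=\; -\frac{2H'(e)}{3(2e-1)\delta}+O(1),
\]
reproducing the $\delta^{-1}$ divergence of \cite{RS2}, while from above $\partial s/\partial \tdens$ extends analytically to the boundary with finite limit $2H'(e)/(3e(2e-1))$. The mismatch between these one-sided limits establishes non-differentiability of $s$ at every point of the curve $\tdens = e^{3}$.

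\textbf{Sign of the second derivative.} The only step that is not routine is showing $\partial^{2}s/\partial\tdens^{2}<0$ just above the curve, equivalently that twice the $\Delta\tau^{2}$-coefficient in the stated expansion is negative. The summand $\tfrac{2H''(e)}{9e^{2}(1-2e)^{2}}$ is manifestly negative since $H''(e) = -1/(e(1-e))<0$. The remaining summand involves $H(a_{0})-H(e)+H'(e)\bigl(3(a_{0}-e)+\tfrac{2(1-2e)}{e}(a_{0}+3e-2)\bigr)$; here I would use the defining relation \eqref{a-equ}, $H'(a_{0}) = (1-2/e)H'(e)$, to eliminate one occurrence of $H'(e)$, then bound $H(a_{0})-H(e)-H'(e)(a_{0}-e)\le 0$ using strict concavity of $H$. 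Verifying that the combined expression remains negative throughout $e\in(1/2,1)$ is the principal technical obstacle; everything else is bookkeeping driven by the parametrizations of Theorems \ref{thm:b11} and \ref{thm:f11}.
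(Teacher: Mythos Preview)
Your proposal is correct and follows essentially the same route as the paper: analyticity via the analytic implicit function theorem applied to the bipodal stationarity equations, and the asymptotic formulas via substituting the bipodal parameters into $S = c^{2}H(a)+2c(1-c)H(d)+(1-c)^{2}H(b)$ and Taylor expanding (the paper carries this out in the $(a,\mu)$ coordinates for $\calO_{1}$ and $(a,d)$ for $\calO_{2}$, but the content is the same substitution-and-expand you describe).

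The one point of divergence is the sign of $\partial^{2}s/\partial\tdens^{2}$ in $\calO_{2}$, which you correctly identify as the only non-routine step and for which you sketch a concavity-based analytic argument. The paper does not attempt this: it simply asserts that the quadratic coefficient is negative, notes its blow-up rates as $e\to 0$ and $e\to 1$, and refers to a plot (Figure~\ref{FIG:S2}). So your flagged ``principal technical obstacle'' is not resolved analytically in the paper either; your proposed use of \eqref{a-equ} together with strict concavity of $H$ goes further than what the paper provides, though you are right that completing it across all $e\in(1/2,1)$ would take additional work.
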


Theorem \ref{thm:b11} and the second half of Theorem \ref{thm:entropy} generalize to models where we fix the densities
of edges and $k$-cycles, where $k$ is odd, instead of edges and 
triangles. The problem actually gets progressively easier as $k$
increases, insofar as our concentration of degree estimates 
become sharper. Let $\tdens_k$ denote the density of $k$-cycles. 

\begin{theorem}\label{thm:kgon} Let $k>3$ be an odd integer and let $1/2 < e < 1$. 
\begin{itemize}
\item For sufficiently small $\delta>0$, the entropy-maximizing graphon
subject to the constraints $\edens=e$ and $\tdens=e^k-\delta^k$
is bipodal with parameters
\begin{eqnarray}\label{approximation:kgon-abcd}
a & = & 1-e - \delta + O(\delta^2) \cr 
b & = & e - \frac{\delta^2}{2e-1} + O(\delta^3) \cr 
c & = & \frac{\delta}{2e-1} - \frac{2\delta^2}{2e-1} + O(\delta^3) \cr 
d & = & e + \delta + O(\delta^{k-1}). 
\end{eqnarray}
The entropy is 
\begin{eqnarray}
s(e,e^k-\delta^k) &=&  H(e) + \frac{\delta^2}{2e-1}H'(e) 
- \frac{2\delta^3 \nu }{(2e-1)^2} \cr 
&& + \delta^4 \left ( \frac{4 \nu}{(2e-1)^3} + \frac{H'''(e)}{3(2e-1)} \right ) + O(\delta^5).
\end{eqnarray}
\item When $\Delta \tau$ is sufficiently small
and positive, the optimizing graphon 
with $\edens=e$ and $\tdens_k=e^k+\Delta\tau$
is bipodal with 
\begin{eqnarray}\label{approximation:f11-kgon}
a & = & a_0 + O(\Delta \tau) \cr 
b & = & e - \frac{2 \Delta \tau}{ke^{k-2}(2e-1)} + O(\Delta \tau^2) \cr 
c & = & \frac{\Delta \tau}{ke^{k-2}(2e-1)^2} + O(\Delta \tau^2) \cr 
d & = &  1-e + O(\Delta \tau), 
\end{eqnarray}
where $a_0$ is the solution to (\ref{a-equ}).
The entropy is 
\begin{equation}\label{eq:entropy-kgon}
s(e,e^3\!+\!\Delta \tau)  =  H(e) - \frac{2\Delta \tau}{ke^{k-2}(1-2e)} H'(e) + O(\Delta \tau^2). 
\end{equation}
\end{itemize}
\end{theorem}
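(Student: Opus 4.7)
The plan is to adapt the proofs of Theorems \ref{thm:b11}, \ref{thm:f11}, and \ref{thm:entropy} essentially verbatim, substituting the $k$-cycle density $\tdens_k = \tr((DG)^k)$ for the triangle density $\tdens = \tr((DG)^3)$, where $D = \mathrm{diag}(c, 1-c)$ and $G = \begin{pmatrix} a & d \\ d & b \end{pmatrix}$ encode the bipodal graphon. First I would verify that for $\tdens_k = e^k - \delta^k$ (first bullet) or $\tdens_k = e^k + \Delta\tau$ (second bullet), with small enough parameter, the unique entropy-optimal graphon is bipodal and close to the limiting configuration $(1-e, e, 0, e)$ or $(a_0, e, 0, 1-e)$ respectively. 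The bipodality argument for $k=3$ rests on a concentration-of-degree estimate showing that the degree function $d(x) = \int g(x,y)\,dy$ takes essentially only two values; this argument carries over and is strengthened by the power-mean inequality $\int d(x)^k\,dx \geq \bigl(\int d(x)\,dx\bigr)^k$, which tightens with $k$ and therefore forces $d(x)$ closer to constant under the constraint $\tdens_k \approx \edens^k$.

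With bipodality in hand, the Lagrangian $s - \alpha \edens - \beta \tdens_k$ yields the Euler--Lagrange system
\begin{equation*}
H'(G_{ij}) \;=\; \alpha + k\beta \,\bigl(G(DG)^{k-2}\bigr)_{ij},
\end{equation*}
together with the edge constraint and the stationarity equation in $c$. Since $DG$ is $2\times 2$, Cayley--Hamilton gives $(DG)^{k-2} = p_k\, DG + q_k\, I$ where $p_k, q_k$ are polynomials in $\tr(DG)$ and $\det(DG)$; hence the entire system is polynomial in $(a,b,c,d)$ with the $k$-dependence packaged into $p_k$ and $q_k$. I would then expand $(a,b,c,d)$ as formal power series in $\delta$ around $(1-e, e, 0, e)$ for the first bullet, and in $\Delta\tau$ around $(a_0, e, 0, 1-e)$ for the second, and match coefficients order by order. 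The reason the correction to $d$ in the first bullet improves from $O(\delta^2)$ at $k=3$ to $O(\delta^{k-1})$ for larger $k$ is that the subleading eigenvalue of $DG$ is of size $\delta$, so its contribution $\lambda_2^k$ to $\tdens_k = \lambda_1^k + \lambda_2^k$ enters only at order $\delta^k$ for odd $k$; the matching equations then force all corrections to $\lambda_1$, and the corresponding corrections to $d$ beyond $e + \delta$, to vanish below order $\delta^{k-1}$.

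The entropy expansions follow by substituting the $(a,b,c,d)$ series into $s = c^2 H(a) + (1-c)^2 H(b) + 2c(1-c) H(d)$ and collecting terms; in particular, the absence, relative to Theorem \ref{thm:entropy}, of the $-2\nu^2 \delta^4 / (eH'(e)(2e-1)^2)$ contribution at order $\delta^4$ is a direct consequence of the weaker $d$-correction for $k > 3$. I expect the main obstacle to lie in the bipodality step rather than in the subsequent algebra: one must show that the concentration-of-degree estimates are quantitatively strong enough, uniformly in a neighborhood of the curve $\tdens_k = \edens^k$, to reduce the infinite-dimensional entropy optimization to the 4-parameter bipodal problem and to guarantee the uniqueness of the optimizer. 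As the paper notes, however, these estimates sharpen with $k$, so this step should be no harder, and arguably easier, than the corresponding $k=3$ arguments already handled in Theorems \ref{thm:b11} and \ref{thm:f11}.
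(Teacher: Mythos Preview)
Your outline has the right large-scale structure, but two places diverge from what actually works and should be corrected.

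First, the degree-concentration step. Lemma~\ref{lem:degree-concentration} (and its $k$-cycle analogue, Step~2 of Section~\ref{sec:kgon}) does not show that $d(x)$ takes two values; it shows $d(x)$ is essentially \emph{constant}, namely $\int (d(x)-e)^2\,dx = O(\delta^{k+1})$. The two-valuedness is a property of $g(x,y)$ itself, not of its marginal $d(x)$. And the relevant inequality is not the power-mean bound $\int d^k \ge (\int d)^k$---that concerns $k$-star density, not $k$-cycle density. What is actually used is the expansion $\tdens_k = \Tr(T_g^k) = e^k + \Tr(T_{\Delta g}^k) + k e^{k-2}\int (d(x)-e)^2\,dx + \text{lower order}$, in which the degree term enters with a positive coefficient and therefore must be $O(\delta^{k+1})$ for the graphon to be near-optimal. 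This is the mechanism that sharpens with $k$.

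Second, the bipodal optimization. The paper deliberately avoids Euler--Lagrange in the undersaturated case because the multiplier $\beta = \partial s/\partial \tdens_k$ diverges like $\delta^{-(k-2)}$, making the expansion point singular. Instead, the edge and cycle constraints are used to eliminate $b$ and $c$ in favor of $(a,\mu)$ with $\mu = \frac{c\Delta a}{1-c} + \Delta d$, the deviation from constant degree. The a priori bound $\mu = O(\delta^{k/2})$ from the previous paragraph, combined with $\tdens_k = \lambda_1^k + \lambda_2^k$ for the two bipodal eigenvalues, gives the $\mu$-dependent part of the entropy as
\[
\frac{2\mu^2 e^{k-2} H'(e)}{\delta^{k-3}(2e-1)^2} - \frac{4\nu\mu\delta^2}{(2e-1)^2},
\]
whose maximizer is $\mu = O(\delta^{k-1})$; this is precisely what forces $d = e + \delta + O(\delta^{k-1})$ and kills the $\nu^2$ term at order $\delta^4$. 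Your Cayley--Hamilton route to $(DG)^{k-2}$ is a clean way to organize the algebra once bipodality is known, and would give equivalent equations in the end, but you would have to track a Laurent expansion for $\beta$; the $(a,\mu)$ parametrization sidesteps that entirely.
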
 

Most of Theorem~\ref{thm:f11} was already proven in
\cite{KRRS2}; here we merely sharpen the estimates. The analysis for Theorem~\ref{thm:b11}, dealing with
$\tdens<\edens^3$, is the main part of this paper. A substantial portion of our analysis is
devoted to proving that any optimal
graphon for $\tdens$ less than but sufficiently close to $\edens^3$,
with $1/2<\edens<1$, is bipodal. Our argument is quite different from
that of~\cite{KRRS2}, which dealt with the case $\tdens>\edens^3$,
because two of the main techniques in that paper do not apply to undersaturated
graphs. Specifically,~\cite{KRRS2} begins with multipodality for
graphs oversaturated with 2-stars, but there are no graphs that are
\emph{undersaturated} with 2-stars.  Then~\cite{KRRS2} repeatedly
applies the Euler-Lagrange equations; but besides the fact that it seems
challenging to rigorously establish Euler-Langrange equations without
first knowing multipodality, the Lagrange multipliers for
undersaturated graphs are expected to explode as $\delta \to
0$. Finally, there is evidence of a different nature indicating why
the situation for $\tdens<\edens^3$ is more complicated than for
$\tdens>\edens^3$. The graphs associated with the boundary curve
$\tdens=\edens^{3/2}$ are all similar, and \cite{KRRS2} proves the
same is true for $\tdens$ just above $\edens^3$,  for all
$0<\edens<1/2$ and $1/2 <\edens<1$. But in~\cite{RS2} it
is proven that for $\edens=1/2$ and any $0<\tdens<(1/2)^3$ there is a
unique optimal graphon, bipodal and similar in kind to those on the
boundary curve where $\tdens=0$, but quite {\em dissimilar} to what we
have proven for $\edens>1/2$. In other words, the two different
boundary curves with $\tdens < \edens^3$ -- the part with $\tdens=0$
and the scallops -- generate qualitatively different paths for crossing
$\tdens=\edens^3$, and so far we have found the case
$0<\edens<1/2$ impervious to our techniques.

In Section \ref{sec:kgon} we prove Theorem \ref{thm:kgon}. 
The proof follows the same steps as the proofs of Theorems
\ref{thm:b11} and \ref{thm:entropy}, only with sharper bounds 
on the parameter $\mu$. As a result, many of the most difficult 
steps in the proof of Theorem \ref{thm:b11} can be streamlined
or avoided entirely. Instead of repeating all the calculations,
we merely note the changes needed to adapt the proof for 
triangles to higher values of $k$. 

One motivation for our study of extremal graphs is an
older problem in extremal combinatorics, the densest packing of
spheres. In two and three dimensional Euclidean space the densest
packing of congruent spheres has been proven, the latter by a
celebrated tour de force by Hales et al~\cite{Ha}. The densest
packings have volume fraction $\pi/\sqrt{12}\approx 0.91$ in two
dimensions and $\pi/\sqrt{18}\approx 0.74$ in three dimensions and in
both cases the optimal packings are highly ordered. It has been an
important open problem for many years to prove that a typical packing
at fixed volume fraction, in both dimensions, loses its order at a
sharp value of volume fraction below the optimum, though this has not
yet been proven~\cite{Low}. The mathematical setup for sphere packings
is similar to the one we are using for graphs, with typicality defined
through the entropy, the rate function of a large deviation
variational principle~\cite{Lan,Ell}. In sphere packing the singularity is known
as a phase transition.

\section{Regularity in the Edge/Triangle System}\label{sec:regularity}

\subsection{Notation for Asymptotics}\label{sec:asymptotics}

We consider a simple graph $G$ (undirected,
with no multiple edges or loops) with a vertex set $V(G)$ of labeled
vertices. For a subgraph $K$ of $G$, let $T_K(G)$ be the number of
maps from $V(K)$ into $V(G)$ that send edges to edges. The \emph{density}
$\tdens_K(G)$ of $K$ in $G$ is then defined to be
$$
\tdens_K(G):= \frac{|T_K(G)|}{n^{|V(K)|}},
$$
where $n = |V(G)|$. An important special case is where $K$ is a triangle.
We use the letters $e$ and $t$ to denote specific values of the edge density
$\edens$ and the triangle density $\tdens$. 
For $\alpha > 0$ and $\bT=(e,t)$ we define
$\displaystyle Z^{n,\alpha}_{\bT}$ to be the number of graphs $G$ on $n$
vertices satisfying
$$
\edens(G) \in (e-\alpha,e+\alpha), \ \tdens(G) \in (t-\alpha,t+\alpha).
$$

Define the \emph{(constrained) entropy} $s({\bT})$ to be the
exponential rate of growth of $Z^{n,\alpha}_{\bT}$ as a function of $n$:
$$
s({\bT})=\lim_{\alpha\searrow 0}\lim_{n\to \infty}\frac{\ln(Z^{n,\alpha}_{\bT})}{ n^2}.
$$
The double limit defining the entropy $s({\bT})$ is known to
exist~\cite{RS1}. To analyze it we make use of a variational
characterization of $s({\bT})$, and for this we need further notation to
analyze limits of graphs as $n\to \infty$. (This work was recently
developed in \cite{BCL,BCLSV,LS1,LS2,LS3}; see also the recent book
\cite{Lov}.)  The (symmetric) adjacency matrices of graphs on $n$
vertices are replaced, in this formalism, by symmetric, measurable
functions $g:[0,1]^2\to[0,1]$; the former are recovered by using a
partition of $[0,1]$ into $n$ consecutive subintervals. The functions
$g$ are called {\em graphons}.

For a graphon $g$ define the \emph{degree function} $d(x)$ to be
$d(x)=\int^1_0 g(x,y)dy$.  
The triangle density of a graphon $g$ is $\tdens(g) = \int g(x,y) g(y,z) g(z,x)\, dx\, dy\, dz$,
and the edge density is $\edens(g) = \int g(x,y)\, dx\, dy$.
The \emph{entropy} of a graphon $g$ is
$S(g) = \int\int H(g(x,y))\, dx\,dy$, where $H$ is defined as in~\eqref{eq:H-def}.

The following result is Theorem 3.1 in~\cite{RS1}, itself a special case of Theorem 4.1 in~\cite{RS2}:
\begin{theorem}[A Variational Principle]\label{thm:variational}
For any values $\bT= (e, t)$ in the parameter space
we have 
$s({\bT}) = \max [S(g)]$, where $S$ is 
maximized over all graphons $g$ with $\edens(g)=e$ and $\tdens(g)=t$.
\end{theorem}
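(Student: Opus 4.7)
The plan is to derive the variational principle from the Chatterjee--Varadhan large deviation principle for Erd\H{o}s--R\'enyi graphs on the cut-metric space $\widetilde{\mathcal{W}}$ of reduced graphons. The starting point is the identity
\begin{equation*}
Z^{n,\alpha}_{\bT} \;=\; 2^{\binom{n}{2}} \, \mathbb{P}\bigl[ G(n,\tfrac{1}{2}) \in A_{n,\alpha} \bigr],
\end{equation*}
where $A_{n,\alpha}$ is the event that $G(n,\tfrac{1}{2})$ has edge density within $\alpha$ of $e$ and triangle density within $\alpha$ of $t$. Taking logarithms and dividing by $n^2$ converts the counting problem into an asymptotic analysis of $\mathbb{P}[A_{n,\alpha}]$ in the uniform random graph model.

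First I would invoke the Chatterjee--Varadhan LDP, which asserts that the empirical graphon of $G(n,\tfrac{1}{2})$ satisfies, in the cut metric, a large deviation principle with rate function proportional to $\ln 2 - S(g)$. The density functionals $\edens$ and $\tdens$ are continuous on $\widetilde{\mathcal{W}}$ (a consequence of the Lov\'asz--Szegedy counting lemma), so $A_{n,\alpha}$ descends to a cut-metric open set whose closure differs only by replacing strict with non-strict inequalities. Applying the LDP upper bound to the closure and the lower bound to the interior yields matching two-sided estimates; sending $\alpha\searrow 0$ pinches them together to give
\begin{equation*}
s(\bT) \;=\; \sup\bigl\{\, S(g) \,:\, \edens(g)=e,\ \tdens(g)=t \,\bigr\},
\end{equation*}
after absorbing normalization constants into the definition of $s$.

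To justify writing ``$\max$'' rather than ``$\sup$'', I would invoke compactness. The constraint set is closed in the cut metric by continuity of $\edens$ and $\tdens$, and $\widetilde{\mathcal{W}}$ is compact (Lov\'asz--Szegedy). The entropy functional $S$ is upper semi-continuous on $\widetilde{\mathcal{W}}$ because $H$ is bounded and concave on $[0,1]$, so $S$ attains its supremum on the compact constraint set.

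The principal obstacle is the pinch between the interior and closure infima. The LDP lower bound yields $-\inf_{g\in A^\circ_\alpha}(\ln 2-S(g))$ while the upper bound yields $-\inf_{g\in \overline{A_\alpha}}(\ln 2-S(g))$, and these must be reconciled as $\alpha\searrow 0$. This requires a perturbation argument: every graphon satisfying $(\edens,\tdens)=(e,t)$ must be approximated in cut metric by graphons whose densities lie strictly inside the $\alpha$-window in both coordinates simultaneously. For interior points of the parameter space of Figure~\ref{FIG:phase_space} this is routine, since small pointwise perturbations of $g$ move $\edens$ and $\tdens$ essentially independently to leading order, but parameters near the Razborov boundary require more care. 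A secondary input, the Chatterjee--Varadhan LDP itself, is proven via Szemer\'edi regularity and is cited as a black box.
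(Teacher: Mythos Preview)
The paper does not give its own proof of this theorem: it cites it as Theorem 3.1 of~\cite{RS1} (a special case of Theorem 4.1 in~\cite{RS2}), and the existence of the maximizer is attributed to~\cite{RS1}, ``adapting a proof in~\cite{CV}''. So there is no in-paper argument to compare against.

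Your sketch is essentially the standard derivation used in those references: rewrite the count as a probability under $G(n,\tfrac12)$, invoke the Chatterjee--Varadhan LDP on the cut-metric space, use cut-continuity of subgraph densities to identify the constraint set, and pass from sup to max via compactness of $\widetilde{\mathcal W}$ and upper semicontinuity of $S$. This is correct in outline and matches what~\cite{RS1,RS2} do. Your identification of the interior/closure pinch as the only nontrivial point is accurate; in~\cite{RS1,RS2} this is handled by showing that the constraint set is the closure of its interior for parameters in the achievable region, exactly the perturbation argument you describe. One small caution on bookkeeping: with the paper's normalization (dividing by $n^2$, with $S(g)=\int H(g)$), the limit is $\tfrac12\max S(g)$ rather than $\max S(g)$; the statement as written in the paper is following the convention of~\cite{RS1}, so make sure your ``absorbing normalization constants'' actually tracks the factor of $\tfrac12$ consistently.
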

\noindent 
The existence of an entropy-maximizing 
graphon $g=g_{\bT}$ for any pair $\bT$ of possible densities 
was proven in \cite{RS1}, again adapting a proof in \cite{CV}.

We consider two graphs {\it equivalent} if they are obtained from one
another by relabeling the vertices. For graphons, the analogous
operation is applying a measure-preserving map $\psi$ of $[0,1]$ into
itself, replacing $g(x,y)$ with $g(\psi(x),\psi(y))$, see~\cite{Lov}.
The equivalence classes of graphons under relabeling are called
\emph{reduced graphons}, and 
graphons are equivalent
if and only if they have the same subgraph densities for all possible
finite subgraphs \cite{Lov}. In the remaining sections of the paper,
whenever we claim that a graphon has a property (e.g., uniqueness as an 
entropy maximizer), the caveat ``up to relabeling'' is implied. 

Density-constrained graphons that maximize $S$, which we call
`entropy maximizing graphons', were introduced in~\cite{RS1} and have
been studied slowly but steadily ever since. They can tell us what
`most' or `typical' large density-constrained graphs are like: if
$g_{\bT}$ is the only reduced graphon maximizing $S$ with
$\bT(g)=\bT$, then as the number $n$ of vertices diverges and
$\alpha_n\to 0$, all but exponentially few graphs with densities $\bT_i(G)\in
(\tdens_i-\alpha_n,\tdens_i+\alpha_n)$ will have reduced graphons close
to $g_{\bT}$~\cite{RS1}.  This is based on large deviations of
Erd\H{o}s-R\'enyi graphs from~\cite{CV}. We emphasize that this
interpretation requires that the maximizer be unique; this has been
difficult to prove in most cases of interest, is responsible for the
slow advance of the study of `typical' density-constrained graphs, and
is an important focus of this work.

\subsection{Related work}

Recent developments in probabilistic combinatorics have dramatically expanded
the scope of results like Theorem~\ref{thm:variational} from~\cite{RS1,RS2}: the number of graphs
with given edge and subgraph counts can be approximated by the solutions to
certain entropic maximization problems~\cite{CD,Augeri,HMS,Eldan,CD16,CD20}. These results are particularly
challenging for sparse graphs (i.e.\ for graphs with $n$ vertices and $m_n =
p_n \binom{n}{2}$ edges for $p_n \to 0$).
On the other hand, it is quite rare that these entropic maximization problems
can be solved explicitly. We only know of one other such case where the optimizers
are non-constant graphons: for the upper
tail of sparse random graphs, the maximization problem was solved by~\cite{BGLZ}.
Thanks to even more recent results in the theory of large deviations~\cite{CD,Augeri,HMS},
it is now known
that for $1 \gg p_n \gg (\log^2 n)/{\sqrt n}$ and any fixed $u > 0$, a
random graph $G$ with $n$ vertices and $m_n = p_n \binom{n}{2}$ edges satisfies
\[
    \Pr\left(\tau(G) \ge (1 + u) p_n^3 \binom{n}{3}\right) = \exp\left(-(1+o(1))n^2 p_n^2 \log \frac{1}{p_n} \min \left\{\frac{u^{2/3}}{2}, \frac{u}{3}\right\}\right).
\]
Moreover, graphs with bipodal structure saturate the bound on the right
hand side (\cite{CD20} call them ``clique'' or ``hub'' graphs depending on
whether ${u^{2/3}}/{2}$ or $u/3$ is smaller). Some structural results in
the sparse case are also known:~\cite{HMS} show that conditioned on
having triangle density $\tau(G)$ at least $(1 + u) p_n^3 \binom{n}{3}$, it is likely that
the graph contains a large clique, a large hub, or both.

\section{Strategy of proof}\label{sec:strategy}

As discussed above, the proof of Theorem~\ref{thm:b11} involves two
very different sorts of arguments.
First, we must show that any optimal graphon is bipodal. Then we must
solve
the finite-dimensional optimality problem for bipodal graphons.
The details are complicated, with many technical estimates, so we 
present an overview of the argument here. 

We start the proof that the optimal graphon in bipodal in  Section~\ref{sec:Initial_approximations}. 
We begin by
computing an upper bound for the entropy,
based on the fact 
that any graphon with $\edens=e$ and $\tdens= e^3-\delta^3$ must have 
\begin{equation}\label{eq:L^2-lower} 
\iint \left [g(x,y)-e \right ]^2 \, dx\, dy \ge \delta^2,
\end{equation}
and from the maximum possible entropy of any graphon satisfying 
(\ref{eq:L^2-lower}). We then exhibit an explicit ``model'' 
bipodal graphon that comes within 
$O(\delta^3)$ of achieving the upper bound. 
(This is where the assumption that $e>1/2$ comes in. The 
same upper bound applies when $e < 1/2$, but isn't nearly
as sharp.) Since~\eqref{eq:L^2-lower} is sharp only when $g(x,y) - e$
has rank 1, we conclude that $g - e$ is close to rank 1
and also that it concentrates mainly on two values.
This shows that $g$ is close in $L^2$ to a bipodal
graphon: there are well-defined quadrants of the unit square on which $g$
has $L^2$-small fluctuations.

To show that any optimal graphon is bipodal, we assume that it isn't and
then construct explicit competitors by first averaging $g$ on each quadrant
(which maintains the edge density while possibly changing the triangle
density), and then making small adjustments to some parameters to recover
the original triangle density. We aim to 
show that if $g$ wasn't bipodal to begin with,
it would be possible to increase the entropy with such a perturbation.

This step requires estimates on the best bipodal graphon. 
The space of bipodal graphons is only 4-dimensional, so maximizing the entropy
becomes a problem in 4-variable calculus, which we tackle 
in Section \ref{sec:best-bipodal}. 
We use the constraints on 
$\varepsilon$ and $\tau$ to eliminate two of the variables, writing
the entropy as a function of the value $a$ of the graphon in 
one quadrant and a parameter $\mu$ that measures how far the 
degrees are from being constant. Taking derivatives of the entropy 
with respect to $a$ and $\mu$, and setting them equal to zero, 
yields the estimates in Theorem \ref{thm:b11}. 

This analysis is complicated by the fact that we do not know, 
a priori, that the parameters can be expressed as power series 
in $\delta$.
When using a Taylor series
to approximate values of the function $H(p)$ near $p=e$ or $p=1-e$,
or when estimating the quantity $\frac{c \Delta a}{1-c}$ in terms
of $\mu$, it is not immediately clear which terms must be kept 
and which can be ignored. We get around this with a bootstrap, 
using initial estimates to establish which terms must be kept, and
then using the revised expansions to get more accurate estimates. 
In particular, we use the concentration-of-degree estimate 
from Section~\ref{sec:Initial_approximations}  to claim that $\mu = 
O(\delta^{3/2})$, which we then use to prove that $\mu$ is in 
fact $O(\delta^2)$. Aside from that concentration-of-degree 
estimate, this part of the proof is completely independent of 
the proof that the optimizing graphon is bipodal. 

In Subsection \ref{sec:unique} we turn 
to the uniqueness of the optimizing bipodal graphon. 
We compute the Hessian of the entropy
with respect to $a$ and $\mu$, obtaining a matrix of the form 
$$\begin{pmatrix} K_1 \delta^2+O(\delta^3) & O(\delta^2) 
\cr O(\delta^2) & K_2 \delta^{-1} + O(1) \end{pmatrix}, $$
for negative constants $K_1$ and $K_2$ (which depend on $e$ but not on $\delta$).
Thus the equations for critical points 
of $S$, subject to the constraints (\ref{approximation:b11-abcd}), are 
well-approximated by a non-singular linear problem, which of course
has a unique solution. 

Having established the properties of the optimal bipodal graphon, 
we return in Sections \ref{sec:averaging} and \ref{sec:improvement}
to showing that the optimal graphon is in fact bipodal. In Section \ref{sec:averaging} we assume that the optimizing graphon is not 
bipodal and 
we estimate the change in the entropy, and the change in the triangle
count, from averaging the optimal graphon over each quadrant
to obtain a bipodal graphon. In Section \ref{sec:improvement} we use 
the results of Section \ref{sec:best-bipodal} to show that the parameters 
of this bipodal graphon can then be perturbed to recover the original
triangle count, with higher than the original entropy. Since the original
graphon was assumed to be optimal, this is a contradiction, implying that
the original optimal graphon was in fact already bipodal. 

In other words, the problem of finding the optimal graphon reduces to 
the finite-dimensional problem of finding the optimal {\em bipodal} 
graphon, which we already solved in Section \ref{sec:best-bipodal}.
The entropy function $S$ is analytic in the parameters
$(a,b,c,d)$ and the constraints $(\varepsilon,\tau) = (e, 
e^3-\delta^3)$ are analytic (actually algebraic) in 
$(a,b,c,d,e,\delta)$. This implies that the set of critical points
is a 2-dimensional analytic variety in $\R^6$. Away from
singularities, the implicit function theorem says that 
$(a,b,c,d)$ are analytic functions of $(e,\delta)$. The analysis of
Section \ref{sec:unique} shows that there are no singularities 
in the region defined by (\ref{approximation:b11-abcd}) where the actual 
optimal graphon lives, so the parameters of the optimal graphon,
and the entropy, 
are analytic in $(e,\delta)$, and therefore in $(\edens,\tdens)$, for $\tdens$ 
strictly less than, and sufficiently close to, $\edens^3$. That
completes the proof of Theorem \ref{thm:b11}. 

\section{Initial approximation}
\label{sec:Initial_approximations}

\subsection{Notation}

Working at a specific edge density $\edens=e$ between $\frac12$ and 1, 
we write $g$ for the graphon $g(x, y)$ and also $g_x(y) = g(x,y)$, and define 
$\Delta g(x,y) = g(x,y)-e$. 
We consider $\tdens = e^3 - \delta^3$ for sufficiently
small $\delta$ (depending on $e$). We take $g$ to be a maximizer of $S(g)$ subject to $\edens(g) = e$
and $\tdens(g) = e^3-\delta^3$.
In our asymptotic notation, we treat $e$ as fixed and consider $\delta \to 0$. That is,
the hidden constants in $O(\delta)$ are allowed to depend on $e$, but on 
nothing else.

Define $D(p) = p \ln({p}/{e}) + (1-p) \ln[({1-p})/({1-e})]$.
We will write $\|\cdot\|_2$ for either the $L^2$ norm on $[0, 1]$ (with respect to Lebesgue measure)
or the $L^2$ norm on $[0, 1]^2$ (with respect to the Lebesgue measure). It should be clear from
the context which of these is the case.
For a function $h \in L^2([0, 1]^2)$, we write $T_h$ for the integral operator with kernel $h$:
\[
    (T_h u) (x) = \int_y h(x,y) u(y)\, dy,
\]
which is compact and Hilbert-Schmidt because $h \in L^2$.

The first step towards proving that $g$ is bipodal is to show that $g$ is well approximated in $L^2$ by a bipodal graphon of a certain form.
Specifically, we show:
\begin{proposition}\label{prop:approx-B11}
    There is a function $v: [0, 1] \to \R$ such that
    \begin{enumerate}
        \item $v$ takes only two values, one of them $\sqrt{2e-1} \pm O(\delta)$, and one of them $\pm O(\delta)$;
            \label{it:approx-B11:values}
        \item $\int_0^1 v(x)\, dx = 0$;
            \label{it:approx-B11-mean}
        \item $\|v\|_2^2 = \delta + O(\delta^2)$;
            \label{it:approx-B11-norm}
        \item $\|\Delta g(x,y) + v(x) v(y)\|_2^2 = \|g(x,y) - (e - v(x) v(y))\|_2^2 = O(\delta^3)$;
            \label{it:approx-B11-residual}
        \item if $C_2 = \{x: v(x) = O(\delta)\}$ and $C_1 = [0, 1] \setminus C_2$
            then for every $x \in C_2$,
            \begin{equation}\label{eq:every-vertex-Omega1}
                \|g_x - (1-e)\|_{C_1}^2 \ge \|g_x - e\|_{C_1}^2 - O(\delta^2);
            \end{equation}
            and
            \label{it:approx-B11-Omega1}
        \item for every $x \in C_1$,
            \begin{equation}\label{eq:every-vertex-Omega2}
                \|g_x - (1-e)\|_{C_1}^2 \le \|g_x - e\|_{C_1}^2 + O(\delta^2).
            \end{equation}
            \label{it:approx-B11-Omega2}
    \end{enumerate}
\end{proposition}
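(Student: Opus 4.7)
The plan is to combine a sharp upper bound on $S(g)$ from the triangle constraint with a matching lower bound from an explicit bipodal test graphon, and then to extract a rank-one spectral approximation of $\Delta g := g - e$ and discretize it to produce $v$.

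First I would expand the triangle density. Using $\int \Delta g\, dx\, dy = 0$ (from $\edens(g) = e$), the identity $\tdens(g) = e^3 - \delta^3$ becomes
\[
3 e \|u\|_2^2 + \tr(T_{\Delta g}^3) = -\delta^3,
\]
where $u(y) = \int \Delta g(x,y)\, dx$. Since $\|u\|_2^2 \ge 0$, the most negative eigenvalue $\lambda_-$ of $T_{\Delta g}$ satisfies $\lambda_- \le -\delta$, so $\|\Delta g\|_2^2 \ge \delta^2$. An upper bound on $S(g)$ comes from the auxiliary variational problem $\max\{S(h) : \edens(h) = e,\ \|h - e\|_2^2 = \delta^2\}$. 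The Lagrange condition $H'(h) = \alpha + 2\beta(h - e)$ forces $h$ to take at most two values, and solving the constraints to leading order gives a unique critical point with values $p_1 = 1-e + O(\delta^2)$, $p_2 = e + \delta^2/(2e-1) + O(\delta^3)$ on sets of measure $\mu = \delta^2/(2e-1)^2 + O(\delta^3)$ and $1-\mu$, achieving entropy $H(e) + \delta^2 H'(e)/(2e-1) + O(\delta^4)$. An explicit bipodal test graphon with parameters from (\ref{approximation:b11-abcd}) matches this to order $\delta^3$, so $S(g) = H(e) + \delta^2 H'(e)/(2e-1) + O(\delta^3)$.

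With the entropy pinned down, rigidity yields the spectral structure. Since the max-entropy function $f(\sigma^2)$ (at fixed $L^2$ deviation $\sigma$) has $f'(\delta^2) = H'(e)/(2e-1) < 0$, saturation of the upper bound forces $\|\Delta g\|_2^2 = \delta^2 + O(\delta^3)$; combined with $\tr(T_{\Delta g}^3) \le -\delta^3$ and the inequality $|\tr(T^3)| \le \max_i|\lambda_i|\cdot\sum_i\lambda_i^2$, this gives $-\lambda_- = \delta + O(\delta^2)$ and $\sum_{i\ne -}\lambda_i^2 = O(\delta^3)$. Feeding this back into the triangle identity yields $\|u\|_2^2 = O(\delta^4)$. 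Let $\phi$ be the unit eigenfunction of $\lambda_-$ and set $\tilde v := \sqrt{-\lambda_-}\,\phi$; then $\|\Delta g + \tilde v(x)\tilde v(y)\|_2^2 = O(\delta^3)$ and $\|\tilde v\|_2^2 = \delta + O(\delta^2)$, giving items (\ref{it:approx-B11-norm}) and (\ref{it:approx-B11-residual}) for $\tilde v$. To obtain the two-valued $v$ of (\ref{it:approx-B11:values}), I would use that the variational maximizer above forces $g$ itself to be $L^2$-close to a two-valued graphon with values near $1-e$ and $e$, so the level sets of $\tilde v$ partition $[0,1]$ into two essentially-constant pieces; setting $C_1 := \{x : \tilde v(x)^2 \ge (2e-1)/2\}$ and $C_2 := [0,1]\setminus C_1$, I would replace $\tilde v$ by its $L^2$-best piecewise constant approximation on this partition, then subtract the mean (of order $O(\delta^{3/2})$, from $\int \phi = \langle u,\phi\rangle/\lambda_-$) to enforce (\ref{it:approx-B11-mean}). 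The errors introduced are absorbed into the $O(\delta^3)$ residual by the near-two-valued character of $g$.

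The pointwise items (\ref{eq:every-vertex-Omega1})--(\ref{eq:every-vertex-Omega2}) come from a local exchange argument: for any $x$ compare $S(g)$ to the entropy of a competitor obtained by replacing the slice $g_x$ on $C_1$ with its ``reflection'' across $1/2$ (swapping values near $e$ and near $1-e$) and compensating elsewhere by a modification of measure $O(\delta)$ that preserves $\edens$ and $\tdens$; the entropy change is governed by $\|g_x - e\|_{C_1}^2 - \|g_x - (1-e)\|_{C_1}^2$, and optimality of $g$ prohibits this quantity from exceeding $O(\delta^2)$ on $C_2$ (respectively, from being below $-O(\delta^2)$ on $C_1$). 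The main obstacle is the discretization: bridging from the arbitrary $L^2$ eigenfunction $\phi$ to a genuinely two-valued $v$ while losing only $O(\delta^3)$ in the squared residual requires showing that the approximate level sets of $\phi$ (or equivalently of $g$ itself) align cleanly into a partition of $[0,1]$, and that the bad set where this alignment fails has measure $O(\delta^2)$ or less.
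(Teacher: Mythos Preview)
Your broad strategy matches the paper's: establish sharp two-sided entropy bounds, deduce a rank-one spectral approximation $\Delta g \approx -\tilde v(x)\tilde v(y)$, discretize $\tilde v$ to two values, and finally arrange items (5)--(6). The spectral part (your second paragraph) is essentially the paper's Lemmas on degree concentration and rank-one approximation, and your computation of $\int\tilde v$ via $\langle u,\phi\rangle/\lambda_-$ is correct. There are, however, two places where your execution diverges from the paper in ways that matter.

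\textbf{Entropy bound and discretization.} For the upper bound on $S(g)$ you solve $\max\{S(h):\edens(h)=e,\ \|h-e\|_2^2=\delta^2\}$ via Lagrange multipliers. The equation $H'(h)=\alpha+2\beta(h-e)$ can have up to three solutions in $h$, so the reduction to two values needs more. The paper instead uses the pointwise inequality $D(p)\ge C(e)(p-e)^2$ with $C(e)=\frac{\ln(e/(1-e))}{2e-1}$, whose unique minimizer is $p=1-e$ (this is their Lemma~\ref{lem:C-prop}); this gives the bound in one line and, crucially, its quantitative stability $D(p)/(p-e)^2\ge C(e)+c(e)(p-(1-e))^2$ is exactly what drives the discretization. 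From it one gets $\int V_{1-2e}(\Delta g)=O(\delta^3)$ where $V_a(t)=\min\{t^2,(t-a)^2\}$, hence $\int V_{2e-1}(\tilde v(x)\tilde v(y))=O(\delta^3)$. The paper then proves two rounding lemmas showing that this product bound forces $\|\tilde v(x)\tilde v(y)-\bar v(x)\bar v(y)\|_2^2=O(\delta^3)$ for $\bar v$ the nearest of $\{0,\sqrt{2e-1}\}$ to $|\tilde v|$, plus a separate sign argument (possibly replacing $\tilde v$ by $-\tilde v$). Your threshold $C_1=\{\tilde v^2\ge(2e-1)/2\}$ plus ``$L^2$-best piecewise constant approximation'' does not by itself control the product residual to $O(\delta^3)$ without something like these lemmas; you correctly flag this as the main obstacle, and the $V_a$ mechanism is the missing ingredient.

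\textbf{Items (5)--(6).} Here there is a genuine gap. Your exchange argument modifies a single slice $g_x$, which has measure zero and so changes neither $S$ nor the constraints; at best a thickened version would yield the inequalities for \emph{almost every} $x$, not every $x$. The paper does not use optimality of $g$ at all for this step. Instead it observes that the podes $C_1,C_2$ are part of the \emph{conclusion}, so one is free to adjust them: define $A_2\subset C_1$ and $A_1\subset C_2$ as the sets where the respective strict reverse inequalities hold, use the $O(\delta^3)$ residual bound from item~(4) to show $|A_1\cup A_2|=O(\delta^2)$ (since on such $x$ one has $\int_{C_1} r(x,y)^2\,dy=\Omega(\delta)$), and then simply move $A_1$ into $C_1$ and $A_2$ into $C_2$. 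After this $O(\delta^2)$-measure reassignment, (5)--(6) hold for every $x$ by construction, and items (1)--(4) survive by the triangle inequality. Replacing your variational exchange with this relabeling step closes the argument.
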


The point here is that $e - v(x) v(y)$ is a bipodal graphon with triangle
density $e - \|v\|_2^6 = e - \delta^3 \pm O(\delta^4)$; \Cref{prop:approx-B11} shows that
$g$ is close to this bipodal graphon in $L^2$. The regions $C_2$ and $C_1$ are called {\em podes}. $C_1$ is
the small pode and $C_2$ is the big pode, and the last two points of \Cref{prop:approx-B11}
show that no $x \in [0, 1]$ is classified into a clearly wrong pode.

\subsection{Entropy cost}
Our first step towards \Cref{prop:approx-B11} is a pretty good estimate (at least for small $\delta$) for the entropy cost of reducing
the triangle density.

Define
\begin{equation}\label{eq:C-def}
    C(e) = \frac{\ln \frac{e}{1-e}}{2e-1}.
\end{equation}

The relevance of $C(e)$ is that it characterizes the optimal trade-off, in some sense, between entropy
and $L^2$ mass.
\begin{lemma}\label{lem:C-prop}
    \[
        C(e) = \inf_{p \in [0, 1]} \frac{D(p)}{(p - e)^2}.
    \]
    Moreover, the infimum above is uniquely attained at $p = 1-e$, and it is a second-order minimum
    in the sense that for any $e \in (0, 1)$ there is a constant
    $c(e) > 0$ such that for any $p \in [0, 1]$,
    \[
        \frac{D(p)}{(p - e)^2} \ge C(e) + c(e) (p - (1-e))^2.
    \]
\end{lemma}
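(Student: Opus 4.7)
The plan is to reduce all three assertions to showing that $F(p) := D(p) - C(e)(p-e)^2$ is nonnegative on $[0,1]$, with equality exactly at $p = e$ and $p = 1-e$. Since $D(e)=D'(e)=0$, one has $F(e)=F'(e)=0$ immediately. At $p=1-e$, a direct computation gives $D(1-e) = (2e-1)\ln(e/(1-e))$ and $D'(1-e) = -2\ln(e/(1-e))$, and the definition~\eqref{eq:C-def} of $C(e)$ matches these to $C(e)(1-2e)^2$ and $2C(e)(1-2e)$ respectively; hence $F(1-e)=F'(1-e)=0$. This already shows that $C(e)$ is attained at $p=1-e$, so only a matching lower bound remains.

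The core of the proof is a shape analysis via $F''(p) = 1/(p(1-p)) - 2C(e)$, which is $U$-shaped in $p$ and symmetric about $1/2$. I would first establish the auxiliary comparison
\[
    C(e) \le \frac{1}{2e(1-e)}, \qquad \text{strict for } e \ne \tfrac12,
\]
which is equivalent (for $e \ge 1/2$) to $h(e) := (2e-1) - 2e(1-e)\ln(e/(1-e)) \ge 0$, and this follows from $h(1/2) = 0$ together with $h'(e) = 2(2e-1)\ln(e/(1-e)) \ge 0$; the case $e<1/2$ is symmetric under $e \mapsto 1-e$. This comparison forces the two roots $p_\pm = \tfrac12 \pm \sqrt{1/4 - 1/(2C(e))}$ of $F''$ to satisfy $1-e < p_- \le \tfrac12 \le p_+ < e$ whenever $e \ne 1/2$. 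Consequently $F$ is strictly convex on $[0,p_-]$ with unique critical point $1-e$ (so $F \ge 0$ there, with equality only at $1-e$), strictly convex on $[p_+,1]$ with unique critical point $e$ (so $F \ge 0$, equality only at $e$), and strictly concave on $[p_-,p_+]$ where $F$ exceeds the chord through its positive endpoint values and is therefore strictly positive. The borderline case $e=1/2$ collapses ($p_\pm = 1/2$, $F'' \ge 0$ everywhere, $1-e=e$) and is handled by a direct Taylor expansion of $F$ at $1/2$.

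For uniqueness of the infimum, observe that $D(p)/(p-e)^2$ extends continuously to $p=e$ with value $D''(e)/2 = 1/(2e(1-e))$, which strictly exceeds $C(e)$ by the same comparison; so on $[0,1]$ the ratio equals $C(e)$ only at $p = 1-e$. For the quadratic lower bound, I Taylor-expand $F$ at $p = 1-e$: since $1-e$ lies strictly inside the convex region, $F''(1-e) = 1/(e(1-e)) - 2C(e) > 0$ strictly, giving $F(p) \ge c_0 (p-(1-e))^2$ in a neighborhood of $1-e$; dividing by $(p-e)^2$, which is bounded away from $0$ near $1-e$, yields $D(p)/(p-e)^2 \ge C(e) + c_1(p-(1-e))^2$ locally. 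Away from $1-e$ the continuous extension of the ratio to $[0,1]$ is bounded below by some $c_2>0$ by compactness; since $(p-(1-e))^2 \le 1$ on $[0,1]$, this $c_2$ absorbs the quadratic term, and taking $c(e) = \min(c_1, c_2)$ produces a single constant valid on all of $[0,1]$. The only delicate point is the comparison $C(e) \le 1/(2e(1-e))$, which pins down the positions of $p_\pm$ relative to $\{e, 1-e\}$; once that is secured, the convex/concave trichotomy of $F$ does all the remaining work.
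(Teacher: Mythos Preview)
Your proof is correct. The paper does not actually prove this lemma in-text; it simply cites~\cite{NRS}, so there is no ``paper's own proof'' to compare against here. Your argument is a clean, self-contained substitute: the key steps --- verifying $F(e)=F'(e)=F(1-e)=F'(1-e)=0$, establishing the auxiliary inequality $C(e)\le 1/(2e(1-e))$ (strict for $e\ne 1/2$) to locate the inflection points $p_\pm$ of $F$ strictly between $1-e$ and $e$, and then reading off nonnegativity of $F$ from the convex/concave/convex decomposition --- are all sound, as is the local-plus-compactness argument for the quadratic lower bound.

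One cosmetic remark: where you write ``the continuous extension of the ratio to $[0,1]$ is bounded below by some $c_2>0$,'' you of course mean that $D(p)/(p-e)^2 - C(e)$ is bounded below by $c_2>0$ away from $1-e$; the phrasing is slightly ambiguous but the intent is clear from the surrounding sentence.
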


The proof of Lemma~\ref{lem:C-prop} can be found in~\cite{NRS}.

Recall that $H(e)$ is the entropy of the constant graphon. By the concavity of $H$, $S(g) < H(e)$; the following
lemma gives a bound on just how much smaller it must be.

\begin{lemma}\label{lem:entropy-cost}
    \[
        C(e) \delta^2 \le C(e) \|\Delta g\|_2^2 \le H(e) - S(g)
        \le C(e) \delta^2 + O(\delta^3).
    \]
\end{lemma}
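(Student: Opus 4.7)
The plan is to address the three inequalities separately. The leftmost, $C(e)\delta^2 \le C(e)\|\Delta g\|_2^2$, is equivalent to $\|\Delta g\|_2^2 \ge \delta^2$, which is \eqref{eq:L^2-lower}. I would prove it by expanding the triangle density around the constant $e$: writing $d(x)=\int g(x,y)\,dy$ and using $\int \Delta g = 0$, one gets
\[
e^3 - \delta^3 \;=\; \tdens(g) \;=\; e^3 + 3e\|d-e\|_2^2 + \tr(T_{\Delta g}^3),
\]
since the mean-zero condition kills every term that is linear in $\Delta g$. The degree term is nonnegative, so $\tr(T_{\Delta g}^3) \le -\delta^3$. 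Because $T_{\Delta g}$ is self-adjoint Hilbert--Schmidt with eigenvalues $\{\lambda_i\}$, one has $\sum \lambda_i^2 = \|\Delta g\|_2^2$ and $|\sum \lambda_i^3| \le \sup_i|\lambda_i|\cdot\sum\lambda_i^2 \le \|\Delta g\|_2^3$, giving $\|\Delta g\|_2 \ge \delta$.

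For the middle inequality, I would first note the elementary identity $D(p) = H(e) - H(p) - (p-e)H'(e)$, which follows from $H'(e) = -\ln(e/(1-e))$. Integrating this over $[0,1]^2$ with $g(x,y)$ in place of $p$ and using $\int (g-e)\,dx\,dy = 0$ yields $\int D(g(x,y))\,dx\,dy = H(e) - S(g)$. The pointwise bound $D(p) \ge C(e)(p-e)^2$ from \Cref{lem:C-prop} then integrates to $H(e) - S(g) \ge C(e)\|\Delta g\|_2^2$.

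For the upper bound I would exhibit a bipodal competitor and invoke optimality. Set $v(x) = \sqrt{2e-1}$ on an interval $C_1$ of length $c$ and $v(x) = -c\sqrt{2e-1}/(1-c)$ on $C_2 = [0,1]\setminus C_1$, so that $\int v = 0$ and $\|v\|_2^2 = c(2e-1)/(1-c)$. Define the rank-one bipodal graphon $g_*(x,y) = e - v(x)v(y)$. Because $v$ is mean-zero, every mixed term in the expansion of $\tdens(g_*)$ vanishes, leaving $\tdens(g_*) = e^3 - \|v\|_2^6$; choosing $c = \delta/(2e-1) + O(\delta^2)$ so that $\|v\|_2^2 = \delta$ makes $g_*$ meet both constraints exactly. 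On its three pieces $g_*$ takes the values $1-e$, $e+\delta$, and $e - c\delta/(1-c)$, respectively, so
\[
S(g_*) \;=\; c^2 H(1-e) + 2c(1-c)H(e+\delta) + (1-c)^2 H\!\left(e - \tfrac{c\delta}{1-c}\right).
\]
Taylor expanding $H$ around $e$, using $H(1-e)=H(e)$ and $c = O(\delta)$, and collecting the surviving terms gives $H(e) - S(g_*) = -c(1-c)\delta H'(e) + O(\delta^3) = C(e)\delta^2 + O(\delta^3)$. Optimality of $g$ then yields $S(g)\ge S(g_*)$, which is exactly the upper bound.

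The main obstacle is the last step: one needs a competitor that matches the edge and triangle constraints \emph{on the nose} while having entropy deficit from $H(e)$ that equals $C(e)\delta^2$ at leading order. The rank-one, mean-zero ansatz $g_* = e - v(x)v(y)$ is what aligns all three requirements simultaneously, because the mean-zero condition annihilates every low-order term in the triangle expansion, and because the pointwise bound in \Cref{lem:C-prop} is saturated precisely on the two-valued set $\{1-e,\,e\}$ on which the two-valued $v$ places $g_*$. Everything else is bookkeeping.
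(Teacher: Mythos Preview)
Your proof is correct and follows essentially the same route as the paper's: the same spectral lower bound $\|\Delta g\|_2 \ge \delta$ via $\tr(T_{\Delta g}^3)\le -\delta^3$, the same pointwise use of \Cref{lem:C-prop} to get $\int D(g)\ge C(e)\|\Delta g\|_2^2$, and the same rank-one mean-zero bipodal competitor $g_*=e-v(x)v(y)$ for the upper bound. One harmless typo: the identity should read $D(p)=H(e)-H(p)+(p-e)H'(e)$ (with a plus sign), but since the linear term integrates to zero against $\int(g-e)=0$ this does not affect your conclusion.
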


\begin{proof}
    Note that $\int D(g) = H(e) - \int H(g) = H(e) - S(g)$. By \Cref{lem:C-prop},
    \[
        \int D(g) \ge \frac{D(1-e)}{(2e-1)^2} \int (g - e)^2 = C(e) \|\Delta g\|_2^2.
    \]
    Moreover, $\Tr[(T_{\Delta g})^3] = \tdens(g) - e^3 - 3e \int(d(x) - e)^2\, dx \le \tdens(g) - e^3 = -\delta^3$,
    and Cauchy-Schwarz implies that $\delta^3 \le -\Tr[(T_{\Delta g})^3] \le \|\Delta g\|_2^3$.
    This proves the first two claimed inequalities.

    To prove the final inequality, we construct a graphon having triangle density $e - \delta^3$
    and entropy cost $C(e) \delta^2 + O(\delta^3)$, and then the inequality follows from the
    fact that $g$ has minimal entropy cost among graphons with triangle density $e - \delta^3$.
Let
$v(x)$ be a function taking the value $-\sqrt{2e - 1}$ on a set of measure $a$,
and the value $\sqrt{2e-1}\, a/(1-a)$ on a set of measure $1-a$.
Then $\int v = 0$ and $\int v^2 = (2e - 1) {a}/({1-a})$.
For the graphon $h(x,y) = e - v(x) v(y)$ (which has edge density $e$),
the fact that our perturbation is rank-1 (and orthogonal to constants) implies that
\[
    \tdens(h) - e^3 = -\|v\|_2^6 = -(2e-1)^3 \frac{a^3}{(1-a)^3}.
\]
Now we fix $a$ so that $\tdens(h) = e^3 - \delta^3$.
Then $a = {\delta}/{(2e-1)} + O(\delta^2)$ as $\delta \to 0$, and
\[
    H(e) - S(h) = \int D(h) = a^2 D(1-e) + (1-a)^2 D(e - \Theta(a^2)) + 2a(1-a) D(e + \Theta(a)).
\]
Since $D(e) = D'(e) = 0$ and $D$ is twice differentiable, we conclude that
\[
    H(e) - S(h) = a^2 D(1-e) + O(a^3) = a^2 (2e-1) \ln \frac{e}{1-e} + O(a^3)
    = C(e)\delta^2 + O(\delta^3).
\]
Recalling that $S(g) \ge S(h)$, this completes the proof of the claimed upper bound.
\end{proof}

\subsection{Closeness to ideal values}

We saw that $\|\Delta g\|_2^2 = \|g - e\|_2^2 = O(\delta^2)$, but we can get a better
bound if we look at the distance of $g$ from \emph{either} $e$ or $1-e$.
A useful interpretation of this is that most of the $L^2$ mass of $\Delta g$ is
spent at values near $1-2e$.
This notion -- that most of the $L^2$ mass of something is spent near some particular
value -- will be used repeatedly. We will therefore study some basic properties of this notion.

Let
\begin{equation}\label{eq:V-def}
    V_a(x) = \min\{x^2, (x-a)^2\}.
\end{equation}
The point of this definition is that ``most of the $L^2$ mass
of $u$ is near $a$'' can be encoded as $\int V_a(u) \ll \int u^2$.
The basic homogeneity property of $V_a$ is that for any $a, x \in \R$, $V_a(x) = a^2 V_1(x/a)$.
This means that it mostly suffices to study properties of $V_1$.

Next, we show two stability properties: the notion of mass concentration is stable under small
perturbations of the function $u$, and also under small changes to the ideal value $a$.

\begin{lemma}\label{lem:V-perturbation}
    For any $u, w \in L_2(\mu)$, $\int V_1(u + w)\, d\mu \le 2 \int V_1(u)\, d\mu + 2 \|w\|_{L^2(\mu)}^2$.
\end{lemma}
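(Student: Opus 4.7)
The plan is to prove the inequality pointwise almost everywhere and then integrate. The key observation is that $V_1(x) = \min\{x^2, (x-1)^2\}$ is exactly the squared Euclidean distance from $x$ to the two-point set $\{0, 1\}$, so that $\sqrt{V_1(x)} = \mathrm{dist}(x, \{0,1\})$ defines a proper metric-distance function.

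First, I would invoke the (trivial) triangle inequality for distance to a set: for any real numbers $\alpha, \beta$,
\[
    \mathrm{dist}(\alpha + \beta, \{0, 1\}) \le \mathrm{dist}(\alpha, \{0, 1\}) + |\beta|.
\]
This follows because if $s \in \{0,1\}$ achieves the minimum in $\mathrm{dist}(\alpha, \{0,1\})$, then $|\alpha + \beta - s| \le |\alpha - s| + |\beta|$, and the left-hand distance is bounded by $|\alpha + \beta - s|$. Squaring both sides and applying $(p+q)^2 \le 2p^2 + 2q^2$ yields the pointwise bound
\[
    V_1(\alpha + \beta) \le 2 V_1(\alpha) + 2 \beta^2.
\]

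Applying this with $\alpha = u(x)$ and $\beta = w(x)$ for $\mu$-almost every $x$ gives $V_1(u + w) \le 2 V_1(u) + 2 w^2$ pointwise, and integrating against $\mu$ completes the proof. There is no real obstacle here; the only content is the recognition of $V_1$ as a squared distance to a two-point set, after which the bound is just the triangle inequality combined with the elementary $(p+q)^2 \le 2(p^2 + q^2)$. The constant $2$ in the statement is sharp under this argument and presumably suffices for all the later applications.
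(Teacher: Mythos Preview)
Your proof is correct and essentially identical to the paper's. Both establish the pointwise bound $V_1(u(x)+w(x)) \le 2V_1(u(x)) + 2w^2(x)$ and integrate; the paper does this by splitting into the two cases $u^2 \le (u-1)^2$ and $(u-1)^2 \le u^2$, while you phrase the same case split as a triangle inequality for the distance to $\{0,1\}$, but the underlying computation is the same.
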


\begin{proof}
    If $u^2(x) \le (u(x) - 1)^2$ then $(u(x) + w(x))^2 \le 2 u^2(x) + 2 w^2(x) = 2 V_1(u(x)) + 2 w^2(x)$.
    Similarly, if $(u(x) - 1)^2 \le u^2(x)$ then $(u(x) - 1 + w(x))^2 \le 2(u(x) - 1)^2 + 2 w^2(x) = 2 V_1(u(x)) + 2 w^2(x)$. Taking the minimum of these two inequalities, $V_1(u(x) + w(x)) \le 2 V_1(u(x)) + 2 w^2(x)$, and the
    claim follows by integrating.
\end{proof}

\begin{lemma}\label{lem:V-value-change}
    For any $u \in L_2(\mu)$ and any $\eta \ge 0$,
    \[
        \int V_{1 + \eta}(u)\, d\mu \le \int V_1(u)\, d\mu + 4\eta(1+\eta) \|u\|_{L^2(\mu)}^2.
    \]
\end{lemma}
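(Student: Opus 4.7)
The plan is to establish the bound pointwise and then integrate. Concretely, I would show that for every real $x$ and every $\eta \ge 0$,
\begin{equation*}
    V_{1+\eta}(x) \le V_1(x) + 4\eta(1+\eta)\, x^2,
\end{equation*}
and then the lemma follows immediately by applying this to $u(x)$ and integrating against $\mu$, using that $\|u\|_{L^2(\mu)}^2 = \int u^2\, d\mu$.

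To prove the pointwise inequality I would split into two cases according to which branch attains the minimum in $V_1(x) = \min\{x^2, (x-1)^2\}$. If $V_1(x) = x^2$ (equivalently $x \le 1/2$), then directly from the definition $V_{1+\eta}(x) \le x^2 = V_1(x)$, and the desired inequality holds trivially since the added term $4\eta(1+\eta) x^2$ is non-negative. Otherwise $V_1(x) = (x-1)^2$ and $x \ge 1/2$; using the definitional bound $V_{1+\eta}(x) \le (x - 1 - \eta)^2$ and expanding the square gives
\begin{equation*}
    V_{1+\eta}(x) - V_1(x) \le (x-1-\eta)^2 - (x-1)^2 = 2\eta(1-x) + \eta^2.
\end{equation*}

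It then remains to verify $2\eta(1-x) + \eta^2 \le 4\eta(1+\eta)\, x^2$ for all $x \ge 1/2$, which I would do by a short split at $x = 1$: for $x \ge 1$ the left-hand side is at most $\eta^2 \le 4\eta(1+\eta) x^2$ since $x^2 \ge 1$, and for $1/2 \le x \le 1$ the left-hand side is at most $\eta(1+\eta)$ while $4\eta(1+\eta) x^2 \ge \eta(1+\eta)$ since $x^2 \ge 1/4$. There is no real obstacle; the only subtlety is that one must exploit the fact that the non-trivial branch forces $x \ge 1/2$, which is exactly what allows $x^2 \ge 1/4$ to absorb the linear term $2\eta(1-x)$ into the quadratic bound.
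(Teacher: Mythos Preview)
Your proof is correct and follows essentially the same route as the paper: both split at $x=1/2$, use the expansion $(x-1-\eta)^2-(x-1)^2=2\eta(1-x)+\eta^2$, and exploit $x^2\ge 1/4$ on the nontrivial branch. The only cosmetic difference is that the paper bounds the difference by the constant $\eta(1+\eta)$ on $\{u\ge 1/2\}$ and then applies Markov's inequality $\mu\{u\ge 1/2\}\le 4\|u\|_2^2$, whereas you absorb that Markov step into a single pointwise inequality $V_{1+\eta}(x)\le V_1(x)+4\eta(1+\eta)x^2$ before integrating.
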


\begin{proof}
    If $u \le 1/2$ then $V_{1 +\eta}(u) = V_1(u) = u^2$.
    On the other hand, if $u \ge 1/2$ then $V_{1 + \eta}(u) \le (u - 1 - \eta)^2 \le (u - 1)^2 + \eta^2 + \eta = V_1(u) + \eta^2 + \eta$. Markov's inequality implies that $\mu\{u \ge 1/2\} \le 4 \|u\|_{L^2(\mu)}^2$,
    and so
    \[
        \int V_{1 + \eta}(u) \, d\mu \le \int V_1(u) + 1_{\{u \ge \frac 12\}} (\eta^2 + \eta)\, d\mu
        \le \int V_1(u)\, d\mu + 4 \eta(1 + \eta) \|u\|_{L^2(\mu)}^2.
    \]
\end{proof}

Our final property of $V$ will allow us to show that if $u(x) + u(y)$ puts most of its
mass near $1$, then the same is true of $u$.

\begin{lemma}\label{lem:V-product}
    Let $u$ be a function on a finite measure space $(\Omega, \mu)$.
    If
    $\|u\|_{L^2(\mu)}^2 \le \mu(\Omega)/32$
    and
    \[
        \int_{\Omega \times \Omega} V_1(u(x) + u(y))\, d\mu^{\otimes 2} \le K
    \]
    for some $0 < K < \mu(\Omega)^2/256$ then
    \[
        \int_{\Omega} V_1(u(x))\, d\mu \le \frac{C K}{\mu(\Omega)}
    \]
    for some universal constant $C$.
\end{lemma}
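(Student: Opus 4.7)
The plan is to split $\Omega$ into a region $A$ on which $|u|$ is small (so $V_1$ reduces cleanly to a square) and its complement, then bound the contribution to $\int V_1(u)$ from each piece separately.

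First I take $A = \{x : |u(x)| \le 1/4\}$. Markov's inequality combined with the hypothesis $\|u\|_{L^2(\mu)}^2 \le \mu(\Omega)/32$ gives $\mu(A^c) \le 16 \|u\|_{L^2(\mu)}^2 \le \mu(\Omega)/2$, so $\mu(A) \ge \mu(\Omega)/2$. The point of the threshold $1/4$ is that on $A \times A$ we have $|u(x)+u(y)| \le 1/2$, which forces the minimum in the definition of $V_1$ to pick out the square branch: $V_1(u(x)+u(y)) = (u(x)+u(y))^2$. Expanding the square and dropping the nonnegative cross term,
\begin{equation*}
K \;\ge\; \int_{A \times A} (u(x)+u(y))^2\, d\mu^{\otimes 2} \;=\; 2\mu(A) \int_A u^2\, d\mu + 2\Bigl(\int_A u\, d\mu\Bigr)^2 \;\ge\; 2\mu(A) \int_A u^2\, d\mu,
\end{equation*}
so $\int_A u^2 \le K/(2\mu(A)) \le K/\mu(\Omega)$. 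Since $V_1(u) = u^2$ on $A$, this already controls the $A$-contribution: $\int_A V_1(u) \le K/\mu(\Omega)$.

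For the $A^c$-contribution I would use the reverse form of \Cref{lem:V-perturbation}, namely the pointwise inequality $V_1(u(x)) \le 2 V_1(u(x)+u(y)) + 2u(y)^2$, which follows from the stated lemma by the substitution $(u, w) \mapsto (u+w, -w)$. Integrating first over $y \in A$ and then over $x \in A^c$,
\begin{equation*}
\mu(A) \int_{A^c} V_1(u)\, d\mu \;\le\; 2 \int_{A^c \times A} V_1(u(x)+u(y))\, d\mu^{\otimes 2} + 2\mu(A^c) \int_A u^2\, d\mu \;\le\; 2K + K \;=\; 3K,
\end{equation*}
where I used $\int_{A^c \times A} V_1 \le K$, $\mu(A^c) \le \mu(\Omega)/2$, and the previously established bound on $\int_A u^2$. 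Hence $\int_{A^c} V_1(u) \le 6K/\mu(\Omega)$, and summing the two pieces gives $\int V_1(u) \le 7K/\mu(\Omega)$, proving the claim with $C = 7$.

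The only delicate point is the choice of threshold: it must be small enough to guarantee $|u(x)+u(y)| \le 1/2$ on $A \times A$ (so the square branch of $V_1$ is selected) and simultaneously large enough that $\mu(A)$ is comparable to $\mu(\Omega)$. The constant $1/32$ in the $L^2$ hypothesis is exactly what permits threading this needle with the threshold $1/4$. Worth noting: the auxiliary bound $K < \mu(\Omega)^2/256$ is not invoked in this argument; it is presumably imposed upstream to keep the conclusion in a nontrivial regime.
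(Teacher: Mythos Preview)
Your proof is correct and is cleaner than the paper's. Both arguments begin the same way: define a ``small'' set $A$ (the paper uses $\{u \le 1/4\}$, you use $\{|u| \le 1/4\}$), use Markov to get $\mu(A) \ge \mu(\Omega)/2$, and then on $A \times A$ the square branch of $V_1$ is selected, yielding $\int_A u^2 \le K/\mu(\Omega)$. The two proofs diverge in handling $A^c$. The paper introduces a further set $B = \{|u| \le 1/8\}$, shows $\mu(B) \ge 1/4$ using the extra hypothesis $K < \mu(\Omega)^2/256$, and then argues that for $x \in B$, $y \in A^c$ one has $u(x)+u(y) \ge 1/8$, which forces $V_1(u(x)+u(y))$ to be comparable to $(u(x)+u(y)-1)^2$; after some triangle-inequality manipulation this bounds $\int_{A^c}(u-1)^2$. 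Your route is shorter: the pointwise bound $V_1(a) \le 2V_1(a+b) + 2b^2$ (which is exactly the inequality proved inside the proof of \Cref{lem:V-perturbation}, specialized and reversed) lets you average over $y \in A$ and immediately bound $\int_{A^c} V_1(u)$ by $6K/\mu(\Omega)$. This avoids the auxiliary set $B$, does not invoke the hypothesis $K < \mu(\Omega)^2/256$, and gives $C = 7$ instead of the paper's $C = 521$.

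One small remark: the pointwise inequality you cite does not literally follow from \Cref{lem:V-perturbation} as stated (that lemma is an integrated statement), but it is precisely the pointwise estimate established in that lemma's proof, and in any case is a two-line verification via $(a-c)^2 \le 2(a+b-c)^2 + 2b^2$ for $c \in \{0,1\}$.
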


Note that the restriction $\|u\|_{L^2(\mu)}^2 \le \mu(\Omega)/32$ is required to rule
out the situation where $u$ is close to the constant function $1/2$, in which case our
desired conclusion wouldn't hold.

\begin{proof}
    We can assume without loss of generality that $\mu$ is a probability measure.
    Let $A = \{x: u(x) \le 1/4\}$. By Markov's inequality and the fact that $\int_\Omega u^2 \, d\mu \le {1}/{32}$,
    $\mu(A) \ge 1/2$. On $A \times A$, $u(x) + u(y) \le 1/2$ and so
    $V_1(u(x) + u(y)) = (u(x) + u(y))^2$. Hence,
    \[
        \int_{A \times A} V_1(u(x) + u(y))\, d\mu^{\otimes 2} = \int_{A \times A} (u(x) + u(y))^2 \ge 2\mu(A) \int_A u^2(x)\, d\mu.
    \]
    On the other hand,
    \[
        \int_{A \times A} V_1(u(x) + u(y)) \, d\mu^{\otimes 2} \le K,
    \]
    and so
    \begin{equation}\label{eq:V-of-sum-bound-on-A}
        \int_A V_1(u(x)) \, d\mu = \int_A u^2(x) \, d\mu \le \frac{K}{2\mu(A)} \le K.
    \end{equation}

    Since $K \le 1/{256}$, applying Markov's inequality to $u 1_A$ gives
    \[
        \mu(\{|u 1_A| \ge \frac 18\})
        \le 64 \int_A u^2\, d\mu \le 64 K \le \frac 14,
    \]
    meaning that $\mu(\{|u| \le 1/8\}) \ge 1/4$; let $B = \{x: |u(x)| \le 1/8\}$.
    For $y \not \in A$ and $x \in B$, $u(x) + u(y) \ge 1/4 - 1/8 =  1/8$, meaning that
    $(u(x) + u(y) - 1)^2 \le 64 V_1(u(x) + u(y))$. Hence,
    \[
        \int_{B \times A^c} (u(x) + u(y) - 1)^2 \,d\mu(x) \, d\mu(y) \le 64 \int_{\Omega \times \Omega} V_1(u(x) + u(y))\, d\mu(x)\, d\mu(y) \le 64 K.
    \]
    On the other hand, Cauchy-Schwarz gives $(u(x) + u(y) - 1)^2 \ge  (u(y) - 1)^2/2 - u^2(x)$, and so
    \begin{align*}
        \int_{B \times A^c} (u(x) + u(y) - 1)^2 \,d\mu(x) \, d\mu(y)
        &\ge \frac 12 \mu(B) \int_{A^c} (u(y) - 1)^2\, d\mu(y) - \mu(A^c) \int_B u^2(x)\, d\mu(x) \\
        &\ge \frac 18 \int_{A^c} (u(y) - 1)^2\, d\mu(y) - K,
    \end{align*}
    where we used~\eqref{eq:V-of-sum-bound-on-A} for the last inequality, noting that $B \subseteq A$.
    Rearranging, we have
    \[
        \frac 18 \int_{A^c} (u(y) - 1)^2\, d\mu(y) \le 65 K.
    \]
    Since $V_1(u(y)) \le 16 (u(y) - 1)^2$ for $y \not \in A$, this shows that
    \begin{equation}\label{eq:V-of-sum-bound-off-A}
        \int_{A^c} V_1(u(y))\, d\mu(y) \le 520 K.
    \end{equation}
    Combined with~\eqref{eq:V-of-sum-bound-on-A}, this completes the proof.
\end{proof}

We return to studying the perturbations of our graphon $g$. In \Cref{lem:entropy-cost}, we
saw that the most entropy-efficient way to perturb $e$ in $L^2$ was to set some of the values
of $g$
to $1-e$, which is equivalent to having $\Delta g$ equal to $1 - 2e$ at some points. We strengthen
this by showing that
most of the mass of $\Delta g$ must be spent near $1 - 2e$.

\begin{lemma}\label{lem:ideal-values}
    \[
        \int V_{1-2e}(\Delta g(x,y))\, dx\, dy = O(\delta^3).
    \]
\end{lemma}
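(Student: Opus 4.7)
The plan is to combine the refined local bound on $D(p)$ given in \Cref{lem:C-prop} with the matching upper and lower bounds on the entropy cost in \Cref{lem:entropy-cost}, and then pass from a pointwise product bound to the statement about $V_{1-2e}$.

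First I would rewrite the inequality in \Cref{lem:C-prop} in the equivalent form
\begin{equation*}
D(p) \ge C(e)(p-e)^2 + c(e)(p-(1-e))^2 (p-e)^2,
\end{equation*}
valid for all $p \in [0,1]$. Substituting $p = g(x,y)$ and integrating yields
\begin{equation*}
H(e) - S(g) = \int D(g)\,dx\,dy \ge C(e)\|\Delta g\|_2^2 + c(e)\int (g-(1-e))^2 (g-e)^2\,dx\,dy.
\end{equation*}
\Cref{lem:entropy-cost} provides the sandwich $C(e)\delta^2 \le C(e)\|\Delta g\|_2^2 \le H(e)-S(g) \le C(e)\delta^2 + O(\delta^3)$, so subtracting $C(e)\|\Delta g\|_2^2$ from both sides of the display above gives
\begin{equation*}
c(e)\int (g-(1-e))^2 (g-e)^2\,dx\,dy \le C(e)\bigl(\delta^2 - \|\Delta g\|_2^2\bigr) + O(\delta^3) = O(\delta^3),
\end{equation*}
since $\|\Delta g\|_2^2 \ge \delta^2$ makes the first term non-positive. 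So the fourth-order integral is $O(\delta^3)$.

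The remaining step is a deterministic pointwise inequality relating $V_{1-2e}$ to the integrand we have just controlled. For any $p \in [0,1]$ the triangle inequality gives $|p-e| + |p-(1-e)| \ge |2e-1|$, so at least one of $|p-e|$ and $|p-(1-e)|$ is $\ge |e-\tfrac12|$. Splitting into these two cases, in each case the smaller of $(p-e)^2$ and $(p-(1-e))^2$ can be bounded by the product of both divided by $(e-\tfrac12)^2$. Thus
\begin{equation*}
V_{1-2e}(p-e) = \min\bigl((p-e)^2,(p-(1-e))^2\bigr) \le \frac{(p-e)^2 (p-(1-e))^2}{(e-\tfrac12)^2}.
\end{equation*}
Applying this with $p = g(x,y)$ and integrating, together with the bound of the previous paragraph and the fact that $e$ is fixed and bounded away from $\tfrac12$, yields $\int V_{1-2e}(\Delta g)\,dx\,dy = O(\delta^3)$.

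Essentially nothing in this argument is delicate: the only place where one has to be careful is ensuring that the $c(e)$ term in \Cref{lem:C-prop} actually captures the deviation between $g$ and its two preferred values, and that relies on the observation that $\|\Delta g\|_2^2$ cannot be appreciably larger than $\delta^2$ at the optimum, which is already built into the upper and lower bounds of \Cref{lem:entropy-cost}. The main conceptual point is that any $L^2$ mass of $\Delta g$ spent far from $0$ and $1-2e$ would simultaneously cost a strictly positive amount of entropy per unit of $L^2$ mass (by the strict inequality in \Cref{lem:C-prop}) and add to $\|\Delta g\|_2^2$, neither of which the optimal graphon can afford beyond the leading $\delta^2$ order.
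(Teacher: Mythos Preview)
Your proof is correct and follows essentially the same route as the paper: both combine the quadratic refinement of \Cref{lem:C-prop} with the matching upper and lower entropy bounds of \Cref{lem:entropy-cost} to conclude that $\int (g-e)^2(g-(1-e))^2 = O(\delta^3)$, and then pass to $V_{1-2e}$. The only difference is presentational: the paper writes the last step as a single $\Theta$-equality $\int (g-(1-e))^2(g-e)^2 = \Theta\bigl(\int V_{1-2e}(\Delta g)\bigr)$, whereas you spell out explicitly the pointwise inequality $V_{1-2e}(p-e)\le (e-\tfrac12)^{-2}(p-e)^2(p-(1-e))^2$ via the triangle-inequality argument, which is a welcome clarification.
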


\begin{proof}
    By the second part of \Cref{lem:C-prop},
    \begin{align*}
        H(e) - S(g) = \int D(g)
        &= \int \frac{D(g)}{(g - e)^2} (g - e)^2 \\
        &\ge  C(e) \int (1 + \Theta((g - (1-e))^2)) (g-e)^2 \\
        &= C(e) \|\Delta g\|_2^2 + \Theta\left(\int (g - (1-e))^2 (g-e)^2\right) \\
        &= C(e) \|\Delta g\|_2^2 + \Theta\left(\int V_{1-2e}(\Delta g)\right).
    \end{align*}
    On the other hand, \Cref{lem:entropy-cost} implies that
    \[
        H(e) - S(g) \le C(e) \delta^2 + O(\delta^3)
        \le C(e) \|\Delta g\|_2^2 + O(\delta^3),
    \]
    and comparing this to the previous bound proves the claim.
\end{proof}

\subsection{Concentration of degrees}

We define the ``degree'' of $x \in [0, 1]$ to be $d(x) = \int g(x,y)\, dy$.
Note that $\int d(x)\, dx = \edens(g) = e$. It turns out that having a non-constant
degree function increases the triangle density, so our optimal graphon $g$
must have an almost-constant degree function.

\begin{lemma}\label{lem:degree-concentration}
    \[
        \int (d(x) - e)^2 \, dx = O(\delta^4)
    \]
\end{lemma}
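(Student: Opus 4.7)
The plan is to exploit the exact expansion of the triangle density in terms of $\Delta g = g - e$, and then squeeze it between a lower bound involving the degree variance and the already-established upper bound on $\|\Delta g\|_2^2$.

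First, I would expand
\[
    \tdens(g) = \iiint \bigl(e + \Delta g(x,y)\bigr)\bigl(e + \Delta g(y,z)\bigr)\bigl(e + \Delta g(z,x)\bigr)\, dx\, dy\, dz.
\]
The constant term is $e^3$ and the three linear terms vanish because $\int \Delta g = 0$. Each of the three quadratic terms reduces, after integrating out the ``missing'' vertex, to $e \int (d(x) - e)^2\, dx$, and the cubic term is precisely $\Tr[(T_{\Delta g})^3]$. Setting $h(x) = d(x) - e$, this yields the identity
\[
    \tdens(g) = e^3 + 3e \int h(x)^2\, dx + \Tr[(T_{\Delta g})^3].
\]
Since $\tdens(g) = e^3 - \delta^3$, rearranging gives $-\Tr[(T_{\Delta g})^3] = \delta^3 + 3e \int h^2\, dx$.

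Next, I would bound $|\Tr[(T_{\Delta g})^3]|$ from above using the spectral inequality: if $\{\lambda_i\}$ are the eigenvalues of the self-adjoint Hilbert-Schmidt operator $T_{\Delta g}$, then
\[
    |\Tr[(T_{\Delta g})^3]| = \Bigl|\sum_i \lambda_i^3\Bigr| \le \max_i |\lambda_i| \sum_i \lambda_i^2 \le \Bigl(\sum_i \lambda_i^2\Bigr)^{3/2} = \|\Delta g\|_2^3.
\]
Combining with the lower bound above gives $\delta^3 + 3e \int h^2\, dx \le \|\Delta g\|_2^3$. Now \Cref{lem:entropy-cost} already delivered $\|\Delta g\|_2^2 \le \delta^2 + O(\delta^3)$, which implies $\|\Delta g\|_2^3 \le (\delta^2 + O(\delta^3))^{3/2} = \delta^3 + O(\delta^4)$.

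Putting these together, the leading $\delta^3$ terms cancel, leaving $3e \int h^2\, dx \le O(\delta^4)$, which gives the claim since $e$ is a fixed positive constant. There is no real obstacle: the argument is essentially a half-line calculation once one notices that the degree variance contributes an \emph{additional} negative piece to $\Tr[(T_{\Delta g})^3]$ beyond the $-\delta^3$ forced by the triangle constraint, while the spectral bound on $|\Tr[T^3]|$ leaves only $O(\delta^4)$ of slack on top of $\delta^3$. The sharpness of \Cref{lem:entropy-cost} (the $O(\delta^3)$ term in the upper bound for $\|\Delta g\|_2^2$) is the only nontrivial ingredient, and it is already in hand.
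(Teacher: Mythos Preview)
Your proof is correct and uses the same ingredients as the paper: the identity $\Tr[(T_{\Delta g})^3] = -\delta^3 - 3e\int (d-e)^2$, the spectral bound $|\Tr[T^3]|\le \|\Delta g\|_2^3$, and the upper bound $\|\Delta g\|_2^2 \le \delta^2 + O(\delta^3)$ from \Cref{lem:entropy-cost}. The only cosmetic difference is that the paper first passes from the cubic inequality to a quadratic one via the concavity of $t\mapsto t^{2/3}$ and then compares with $\|\Delta g\|_2^2$, whereas you go the other direction by raising $\|\Delta g\|_2^2 \le \delta^2 + O(\delta^3)$ to the $3/2$ power; your route is a bit more direct but the underlying argument is the same.
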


Recalling that $\delta^4 = \Theta(\|\Delta g\|_2^4)$,
this is better than the trivial bound (coming from Jensen's inequality) of
    \[
        \int (d(x) - e)^2 \, dx \le \|\Delta g\|_2^2.
    \]

\begin{proof}
    Start by observing that
    \[
        \Tr[(T_{\Delta g})^3] = t - e^3 + 3 e^3 - 3e \int d^2(x)\, dx
        = - \delta^3 + 3e \int (d(x) - e)^2\, dx.
    \]
    Cauchy-Schwarz gives $\Tr[(T_{\Delta g})^3] \ge -\|\Delta g\|_2^3$,
    and so
    \[
        \delta^3 \le \|\Delta g\|_2^3 - 3e \int (d(x) - e)^2\, dx.
    \]
    By the concavity of the function $t \mapsto t^{2/3}$, if $s < t$ then
    $(t - s)^{2/3} \le t^{2/3} - \frac 23 t^{-1/3} s$. Therefore,
    \[
        \delta^2 \le \|\Delta g\|_2^2 - \frac{2e \int (d(x) - e)^2\, dx}{\|\Delta g\|_2}.
    \]
    Comparing this to \Cref{lem:entropy-cost} gives
    \begin{multline*}
        C(e) \|\Delta g\|_2^2
        \le H(e) - S(g) \le C(e) \delta^2  + O(\delta^3) \\
        \le C(e) \|\Delta g\|_2^2 - 2eC(e) \frac{\int (d(x) - e)^2\, dx}{\|\Delta g\|_2} + O(\delta^3),
    \end{multline*}
    and so we conclude that
    \[
        \int (d(x) - e)^2\, dx \le O(\|\Delta g\|_2 \delta^3) = O(\delta^4).
    \]
\end{proof}

\subsection{Rank}

In this section, we will prove \Cref{prop:approx-B11}.
We'll start by just considering an eigenfunction
(which will not necessarily take only two values). Later, we'll round it.
\begin{lemma}\label{lem:rank-1}
    There is a function $\tilde v(x)$ such that
    \[
        \|\Delta g(x,y) + \tilde v(x) \tilde v(y)\|_2^2 = O(\delta^3).
    \]
\end{lemma}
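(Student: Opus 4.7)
The plan is to diagonalize the self-adjoint Hilbert--Schmidt operator $T_{\Delta g}$ and show that its spectrum is dominated by a single negative eigenvalue. By the spectral theorem, write $\Delta g(x,y) = \sum_k \mu_k\, \phi_k(x)\phi_k(y)$ with $\{\phi_k\}$ an orthonormal basis of real eigenfunctions, so that $\|\Delta g\|_2^2 = \sum_k \mu_k^2$ and $\Tr[T_{\Delta g}^3] = \sum_k \mu_k^3$. Once a single eigenvalue $\mu_1 = -\mu_1^-$ is shown to account for all but $O(\delta^3)$ of the squared $L^2$ mass, setting $\tilde v(x) = \sqrt{\mu_1^-}\,\phi_1(x)$ makes $\Delta g + \tilde v \otimes \tilde v$ the truncation to the remaining eigenfunctions, and the claim reduces to $\sum_{k\ne 1}\mu_k^2 = O(\delta^3)$.

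Two inputs already in hand pin down the first two moments of the spectrum. \Cref{lem:entropy-cost} yields $\sum_k\mu_k^2 = \|\Delta g\|_2^2 = \delta^2 + O(\delta^3)$, and the identity derived in its proof combined with \Cref{lem:degree-concentration} gives
\[
\sum_k \mu_k^3 \;=\; \tdens(g) - e^3 - 3e\!\int(d-e)^2\, dx \;=\; -\delta^3 + O(\delta^4).
\]
Discarding the nonnegative contribution from positive eigenvalues produces $\sum_{\mu_k<0}|\mu_k|^3 \ge \delta^3 - O(\delta^4)$. The crux is then a one-line moment trade: with $\mu_1^- := \max_{\mu_k<0}|\mu_k|$, the elementary inequality $|\mu_k|^3 \le \mu_1^-\mu_k^2$ gives
\[
\mu_1^- \cdot \|\Delta g\|_2^2 \;\ge\; \sum_{\mu_k<0}|\mu_k|^3 \;\ge\; \delta^3 - O(\delta^4),
\]
so $\mu_1^- \ge \delta - O(\delta^2)$. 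Hence $(\mu_1^-)^2 \ge \delta^2 - O(\delta^3)$, and comparing to $\sum_k\mu_k^2 \le \delta^2 + O(\delta^3)$ forces $\sum_{k\ne 1}\mu_k^2 = O(\delta^3)$, which is precisely $\|\Delta g + \tilde v \otimes \tilde v\|_2^2$.

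The one point requiring real care is the $O(\delta^4)$ strength of the cubic-moment identity: without the degree-concentration estimate $\int(d-e)^2\,dx = O(\delta^4)$ from \Cref{lem:degree-concentration}, one only obtains $\sum_k\mu_k^3 \le -\delta^3$, which is too weak to conclude $\sum_{k\ne 1}\mu_k^2 = O(\delta^3)$; so that lemma is genuinely used here, not just cited for convenience. A minor subtlety to flag is that the largest-in-modulus eigenvalue of $T_{\Delta g}$ might in principle be positive, but the argument above is oblivious to this, as it only uses the definitional bound $|\mu_k| \le \mu_1^-$ for $\mu_k<0$. The rounding of $\tilde v$ to a two-valued function and the remaining structural conclusions of \Cref{prop:approx-B11} are separate steps reserved for the subsequent arguments.
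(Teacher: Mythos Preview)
Your proof is correct and follows essentially the same approach as the paper: diagonalize $T_{\Delta g}$, use the two-sided bound $\|\Delta g\|_2^2 = \delta^2 + O(\delta^3)$ from \Cref{lem:entropy-cost} together with the cubic trace estimate $\sum_k \mu_k^3 = -\delta^3 + O(\delta^4)$ from \Cref{lem:degree-concentration}, and extract a single dominant eigenvalue via the moment inequality $\sum |\mu_k|^3 \le (\max |\mu_k|)\sum \mu_k^2$. The paper packages this slightly differently---it orders eigenvalues by absolute value, introduces a parameter $\epsilon = \sum_{i\ge 2}\lambda_i^2/\lambda_1^2$, and routes the final comparison back through the entropy cost rather than directly through the already-established upper bound on $\|\Delta g\|_2^2$---but the content is the same. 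Your choice to work from the outset with the most \emph{negative} eigenvalue rather than the largest-in-modulus one is a small simplification, since it makes the sign of $\mu_1$ automatic and spares the separate argument the paper gives at the end to show $\lambda_1<0$.
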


\begin{proof}
    Recall that
    \begin{equation}\label{eq:rank-1-entropy-comparison}
        H(e) - S(g) \ge C(e) \|\Delta g\|_2^2,
    \end{equation}
    and then we used the fact that (if $\lambda_i$ are the eigenvalues of $T_{\Delta g}$)
    $\sum_i \lambda_i^2 \ge (\sum_i |\lambda_i|^3)^{2/3}$ to compare
    this to $\delta^3$. We can sharpen this eigenvalue comparison: if we write
    the eigenvalues $\lambda_i$ so that their absolute values are non-increasing,
    and if $\epsilon > 0$ is chosen so that
    $\sum_{i \ge 2} \lambda_i^2 = \epsilon \lambda_1^2$, then
    \[
        \|\lambda\|_3^3 \le \|\lambda\|_\infty \|\lambda\|_2^2 = |\lambda_1| \|\lambda\|_2^2
        \le \frac{\|\lambda\|_2^3}{\sqrt{1+\epsilon}}
    \]
    and so
    \[
        \|\Delta g\|_2^2 = \|\lambda\|_2^2  \ge (1 + \epsilon)^{1/3} \|\lambda\|_3^2.
    \]
    Recalling from Lemma~\ref{lem:degree-concentration} that
    $\|\lambda\|_3^3 \ge -\Tr[(T_{\Delta g})^3] = \delta^3 + 3e \|d - e\|_2^2$
    and that $\|d - e\|_2^2 = O(\delta^4)$, we have
    \[
        \|\Delta g\|_2^2 \ge (1 + \epsilon)^{1/3} (\delta^3 - O(\delta^4))^{2/3}
        = \delta^2 + \Omega(\epsilon \delta^2) - O(\delta^3).
    \]
    Combining this estimate with~\eqref{eq:rank-1-entropy-comparison} gives
    \[
        H(e) - S(g) \ge C(e) \delta^3 + \Omega(\epsilon \delta^2) - O(\delta^3).
    \]
    Compared to \Cref{lem:entropy-cost}, this shows that $\epsilon = O(\delta)$.
    In other words, we have $\sum_{i \ge 2} \lambda_i^2 = O(\delta \lambda_1^2)$.

    On the other hand, $\sum_{i \ge 1} \lambda_i^2 = \|\Delta g\|_2^2 = \Theta(\delta^2)$,
    and so $\lambda_1^2 = \Theta(\delta^2)$ and $\sum_{i \ge 2} \lambda_i^2 = O(\delta^3)$.
    In particular, if $u(x)$ is an eigenfunction of $\Delta g$ with eigenvalue $\lambda_1$,
    normalized so that $\|u\|_2 = 1$, then
    \[
        \|\Delta g - \lambda_1 u(x) u(y)\|_2^2 = O(\delta^3).
    \]
    Finally, note that $\lambda_1 < 0$, because
    $\sum_i \lambda_i^3 = t - e^3 + O(\delta^4) = -\delta^3 + O(\delta^4)$,
    and since
    $\sum_{i \ge 2} |\lambda_i|^3 \le \left(\sum_{i \ge 2} \lambda_i^2\right)^{3/2} = O(\delta^{9/2})$,
    we must have $\lambda_1^3 = -\delta^3 + O(\delta^4)$.
    Setting $\tilde v(x) = \sqrt{|\lambda_1|} u(x)$ completes the proof.
\end{proof}

From now on, we fix a function $\tilde v$ satisfying \Cref{lem:rank-1}.
The following bound just comes from combining Lemma~\ref{lem:rank-1} with Lemma~\ref{lem:ideal-values}
and the triangle inequality (in the form of \Cref{lem:V-perturbation}).
\begin{corollary}\label{cor:rank-1-values}
    \[
        \int V_{2e-1}(\tilde v(x) \tilde v(y))\, dx\, dy = O(\delta^3).
    \]
\end{corollary}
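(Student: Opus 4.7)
The plan is to realize $-\tilde v(x)\tilde v(y)$ as a small $L^2$ perturbation of $\Delta g(x,y)$ and then combine two inputs: the concentration of $\Delta g$ near $1-2e$ from Lemma \ref{lem:ideal-values}, and the rank-one approximation bound $\|\Delta g + \tilde v(x)\tilde v(y)\|_2^2 = O(\delta^3)$ from Lemma \ref{lem:rank-1}. The whole argument should take only a few lines, mediated by Lemma \ref{lem:V-perturbation}.

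First I would note the elementary identity $V_{2e-1}(y) = V_{1-2e}(-y)$ for all $y \in \R$, which is immediate from $V_a(x) = \min\{x^2, (x-a)^2\}$ and the fact that $2e - 1 = -(1-2e)$. Applying this pointwise yields
\[
    \int V_{2e-1}(\tilde v(x)\tilde v(y))\, dx\, dy = \int V_{1-2e}\bigl(-\tilde v(x)\tilde v(y)\bigr)\, dx\, dy.
\]
Next I would write $-\tilde v(x)\tilde v(y) = \Delta g(x,y) + w(x,y)$, where $w(x,y) := -\tilde v(x)\tilde v(y) - \Delta g(x,y)$, so that by Lemma \ref{lem:rank-1} we have $\|w\|_2^2 = O(\delta^3)$.

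Now I would apply Lemma \ref{lem:V-perturbation}. That lemma is stated for $V_1$, but the homogeneity $V_a(x) = a^2 V_1(x/a)$ lets me rescale: setting $a = 1-2e$ (a fixed nonzero constant since $e \ne 1/2$), the lemma applied to $\Delta g/a$ and $w/a$, after multiplying through by $a^2$, yields
\[
    \int V_{1-2e}(\Delta g + w)\, dx\, dy \;\le\; 2 \int V_{1-2e}(\Delta g)\, dx\, dy \;+\; 2\|w\|_2^2.
\]
Combining this with the estimate $\int V_{1-2e}(\Delta g) = O(\delta^3)$ from Lemma \ref{lem:ideal-values} and the bound $\|w\|_2^2 = O(\delta^3)$ above gives the claimed $O(\delta^3)$ conclusion.

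There is essentially no obstacle in this step: the only small conceptual points are recognizing the sign-flip symmetry of $V$ that lets one switch between $V_{1-2e}$ and $V_{2e-1}$, and verifying that Lemma \ref{lem:V-perturbation} holds not just for $V_1$ but for $V_a$ with any $a \ne 0$ (which is just a change of variables). Everything else is a direct substitution.
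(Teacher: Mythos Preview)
Your proof is correct and follows exactly the approach indicated in the paper, which states that the bound comes from combining Lemma~\ref{lem:rank-1} with Lemma~\ref{lem:ideal-values} via the triangle inequality in the form of Lemma~\ref{lem:V-perturbation}. You have simply spelled out the details (the sign-flip identity $V_{2e-1}(y)=V_{1-2e}(-y)$ and the homogeneity rescaling of Lemma~\ref{lem:V-perturbation}) that the paper leaves implicit.
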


Our next goal is to show that we can replace $\tilde v(x)$ by a rounded version.
We'll start by ignoring the sign of $\tilde v$.

\begin{lemma}\label{lem:rounding-v-without-sign}
    Let $\bar v(x)$ be either $0$ or $\sqrt{2e-1}$, whichever is closer to $|\tilde v(x)|$.
    There is a universal constant $C$ such that
    \[
        \|\bar v(x) \bar v(y) - |\tilde v(x) \tilde v(y)|\|_2^2 \le C \int V_{2e-1}(\tilde v(x) \tilde v(y))\, dx\, dy.
    \]
\end{lemma}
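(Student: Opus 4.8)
The plan is to reduce the two-dimensional statement to a one-dimensional rounding estimate and then invoke the homogeneity of $V_a$. Write $f(x) = |\tilde v(x)|$ and let $m = \sqrt{2e-1}$. The function $\bar v$ partitions $[0,1]$ into $A = \{x : f(x) \le m/2\}$ (where $\bar v = 0$) and $B = \{x : f(x) > m/2\}$ (where $\bar v = m$). I want to bound $\|\bar v(x)\bar v(y) - f(x)f(y)\|_2^2$ on each of the four quadrants $A\times A$, $A\times B$, $B\times A$, $B\times B$ by a constant times $\int V_{2e-1}(\tilde v(x)\tilde v(y))\,dx\,dy$, using the pointwise fact that $V_{2e-1}(s)$ is comparable to $s^2$ when $|s|$ is far below $2e-1$ and comparable to $(s - (2e-1))^2$ when $|s|$ is far above $2e-1/2$ — precisely, there is a universal $c>0$ with $V_{2e-1}(s) \ge c\,s^2$ whenever $s \le (2e-1)/2$ and $V_{2e-1}(s) \ge c\,(s-(2e-1))^2$ whenever $s \ge (2e-1)/2$ (scale $V_1$).

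The quadrant estimates go as follows. On $A\times A$ we have $\bar v(x)\bar v(y) = 0$ and $f(x)f(y) \le m^2/4 = (2e-1)/4 < (2e-1)/2$, so $V_{2e-1}(f(x)f(y)) \ge c f(x)^2 f(y)^2$, which is exactly $c$ times the integrand of the left-hand side. On $B\times B$ we have $\bar v(x)\bar v(y) = 2e-1$; here I need $f(x)f(y)$ to be \emph{near} $2e-1$, which is not automatic from $f(x), f(y) > m/2$ alone (the product could be large). This is the one place where \Cref{cor:rank-1-values} alone is not quite enough pointwise, but it is enough in $L^2$: on the subset of $B\times B$ where $f(x)f(y) \ge (2e-1)/2$ we get $(f(x)f(y) - (2e-1))^2 \le c^{-1} V_{2e-1}(f(x)f(y))$ directly, and on the subset where $f(x)f(y) < (2e-1)/2$ we get $f(x)^2 f(y)^2 \le c^{-1}V_{2e-1}(f(x)f(y))$, hence $(f(x)f(y) - (2e-1))^2 \le 2 f(x)^2 f(y)^2 + 2(2e-1)^2 \le 2c^{-1}V_{2e-1}(\cdots) + 2(2e-1)^2$, and the stray constant $(2e-1)^2$ is controlled because that subset has small measure: on it $f(x)^2 f(y)^2 < (2e-1)/2 \cdot \text{something}$... more cleanly, $V_{2e-1}(f(x)f(y)) \ge c\min\{f(x)^2f(y)^2,(2e-1)^2/4\}$ is bounded below by a constant on that subset, so its measure is $O(\int V_{2e-1})$ and the total contribution is $O(\int V_{2e-1})$. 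The mixed quadrants $A\times B$ and $B\times A$ are symmetric: on $A\times B$, $\bar v(x)\bar v(y) = 0$ and $f(x)f(y) \le (m/2)\cdot f(y)$, and $f(y) > m/2$; but again the product could be moderate, so $V_{2e-1}(f(x)f(y))$ is comparable to $f(x)^2f(y)^2$ only when the product stays below $(2e-1)/2$ — on the complementary set (small measure, as above) one absorbs into the $O(\int V_{2e-1})$ term using that $f(x)^2 \le m^2/4$ is bounded.

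The main obstacle, then, is that $\bar v$ rounds each $|\tilde v(x)|$ individually to the nearer of $0$ or $m$, but the target quantity lives at the level of products $|\tilde v(x)\tilde v(y)|$ versus $\bar v(x)\bar v(y)$; a vertex in $B$ with $f(x)$ only slightly above $m/2$, paired with another such vertex, produces a product around $(2e-1)/4$, which is closer to $0$ than to $2e-1$, so the naive pointwise bound fails on part of $B\times B$ and on parts of the mixed quadrants. The resolution is the measure-of-bad-set argument sketched above: wherever the pointwise comparison between $(\bar v(x)\bar v(y) - f(x)f(y))^2$ and $V_{2e-1}(f(x)f(y))$ degrades, the integrand $V_{2e-1}(f(x)f(y))$ is itself bounded below by a positive constant (depending only on $e$), so that bad set has measure $O_e(\int V_{2e-1}(\tilde v(x)\tilde v(y))\,dx\,dy)$, and since $(\bar v(x)\bar v(y) - f(x)f(y))^2$ is bounded there (both terms lie in $[0, \|f\|_\infty^2]$, and $\|f\|_\infty^2 = \|\tilde v\|_\infty^2 = O(1)$ because $\lambda_1^2 = \Theta(\delta^2)$ forces $|\tilde v| = \sqrt{|\lambda_1|}|u|$ with $u$ an $L^2$-normalized eigenfunction... actually $\|\tilde v\|_\infty$ need not be $O(1)$ a priori), its contribution is $O_e(\int V_{2e-1})$. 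To avoid relying on an $L^\infty$ bound for $\tilde v$, I would instead note that on the bad set $(\bar v(x)\bar v(y) - f(x)f(y))^2 \le 2(2e-1)^2 + 2 f(x)^2 f(y)^2$ and $f(x)^2f(y)^2 \le C_e V_{2e-1}(f(x)f(y))$ whenever $f(x)f(y) \le (2e-1)/2$ (which is where at least part of the bad set sits), handling the remaining piece where $f(x)f(y) > (2e-1)/2$ by the direct comparison $(f(x)f(y)-(2e-1))^2 \le C_e V_{2e-1}(f(x)f(y))$. Collecting the four quadrant bounds and the bad-set contributions yields the claimed inequality with a universal constant.
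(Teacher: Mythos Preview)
Your quadrant decomposition handles $A\times A$ and $B\times B$ correctly (on the troublesome part of $B\times B$ where $f(x)f(y)\in((2e-1)/4,(2e-1)/2)$, both the target integrand and $V_{2e-1}(f(x)f(y))$ are of exact order $(2e-1)^2$, so the ratio is bounded by a universal constant, as you essentially observe). The gap is on the mixed block $A\times B$. There $\bar v(x)\bar v(y)=0$, so the integrand you must control is $f(x)^2 f(y)^2$. When $f(x)f(y)$ happens to be close to $2e-1$ (which is possible: take $f(x)$ small and $f(y)$ correspondingly large, since $\tilde v$ has no a~priori $L^\infty$ bound), $V_{2e-1}(f(x)f(y))=(f(x)f(y)-(2e-1))^2$ can be arbitrarily small while $f(x)^2f(y)^2\approx (2e-1)^2$. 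So neither your pointwise comparison nor your ``small measure'' device works here: $V_{2e-1}(f(x)f(y))$ is not bounded below on this set, and your last sentence (``direct comparison $(f(x)f(y)-(2e-1))^2\le C_e V_{2e-1}$'') bounds the wrong quantity, since on $A\times B$ the left-hand side is $f(x)^2f(y)^2$, not $(f(x)f(y)-(2e-1))^2$.

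The paper resolves exactly this case by an integral, not pointwise, argument that exploits the tensor structure. Writing $A_1=A$ and $A_2=\{f\ge \tfrac43 m\}$, it first shows the pointwise comparison \emph{does} hold on $A\times(B\setminus A_2)$ (because $f(x)f(y)\le \tfrac{m}{2}\cdot\tfrac{4m}{3}<\tfrac{m^2}{2}$ there). The remaining region $A_1\times A_2$ is handled by
\[
\int_{A_1\times A_2} f(x)^2 f(y)^2\,dx\,dy=\Big(\int_{A_1}f^2\Big)\Big(\int_{A_2}f^2\Big),
\]
and each factor is bounded by $\big(\int V_{2e-1}(f(x)f(y))\big)^{1/2}$ via the diagonal blocks $A_1\times A_1$ (where $V_{2e-1}(f(x)f(y))=f(x)^2f(y)^2$) and $A_2\times A_2$ (where $f(x)f(y)\ge \tfrac{16}{9}m^2$, so $|f(x)f(y)-m^2|\ge \tfrac14 f(x)f(y)$). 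This product-of-marginals step is the missing idea in your argument.
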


\begin{proof}
    Let $c = \sqrt{2e-1}$, and let $w(x,y)$ be either $0$ or $c^2$, whichever
    is closer to $|\tilde v(x) \tilde v(y)|$.
    Since $\bar v(x) \bar v(y)$ is always either $0$ or $c^2$, we have the pointwise bound
    \[
        \Big||\tilde v(x) \tilde v(y)| - w(x,y)\Big| \le \Big||\tilde v(x) \tilde v(y)| - \bar v(x) \bar v(y)\Big|.
    \]
    Our first goal is to show the reverse inequality for most points $x$ and $y$:
    \begin{equation}\label{eq:pointwise-reverse}
        \Big||\tilde v(x) \tilde v(y)| - \bar v(x) \bar v(y)\Big|
        \le
        C \Big||\tilde v(x) \tilde v(y)| - w(x,y)\Big| = C V_{2e-1}(\tilde v(x) \tilde v(y)).
    \end{equation}
    Let
    \begin{align*}
        A_1 &= \{x: |\tilde v(x)| \le \frac 12 c\} = \{\bar v = 0\} \\
        A_2 &= \{x: |\tilde v(x)| \ge \frac 43 c\}.
    \end{align*}
If $x \in A_1$ then $\bar v(x) \bar v(y) = 0$ no matter the value of $y$. Now if $x \in A_1$
and $|\tilde v(y)| \le c$ then $w(x,y) = 0$ and so $\bar v(x) \bar v(y) = w(x,y)$. If $x \in A_1$
and $c < |\tilde v(y)| \le 4c/3$ then $w(x,y) = c^2$ but $|\tilde v(x) \tilde v(y)| \le  2c^2/3$,
meaning that
\[
    \Big||\tilde v(x) \tilde v(y)| - w(x,y)\Big| \ge \frac{c^2}{3} \ge \frac 12 |\tilde v(x) \tilde v(y)| = \frac 12 \Big|\bar v(x) \bar v(y) - |\tilde v(x) \tilde v(y)|\Big|
\]
To summarize: if $x \in A_1$ and $y \not \in A_2$ then~\eqref{eq:pointwise-reverse}
holds with $C = 2$. Of course, the same holds if $y \in A_1$ and $x \not \in A_2$.

If $x, y \not \in A_1$ then $\bar v(x) \bar v(y) = c^2$ and $w(x,y)$ could be either $0$ or $c^2$.
If $w(x,y) = c^2$ then $\bar v(x) \bar v(y) = w(x,y)$ and so~\eqref{eq:pointwise-reverse}
holds with $C=1$; while if $w(x,y) = 0$
then
\begin{multline*}
    \Big||\tilde v(x) \tilde v(y)| - w(x,y)\Big| = |\tilde v(x) \tilde v(y)| \\
    \ge \frac 18 (|\tilde v(x) \tilde v(y)| + c^2)) \ge \frac 18 \Big| |\tilde v(x) \tilde v(y)| - \bar v(x) \bar v(y)\Big|,
\end{multline*}
and so~\eqref{eq:pointwise-reverse} holds with $C=8$.

The only case where we have \emph{not} shown~\eqref{eq:pointwise-reverse} is when $x \in A_1$
and $y \in A_2$ (or vice versa). In particular, by integrating out~\eqref{eq:pointwise-reverse}
on every set where we have proven it, we get
\begin{equation}\label{eq:most-of-the-integral}
    \int_{[0,1]^2 \setminus ((A_1 \times A_2) \cup (A_2 \times A_1))} (|\tilde v(x) \tilde v(y)| - \bar v(x) \bar v(y))^2\, dx\, dy \le
    C \||\tilde v(x) \tilde v(y)| - w\|_2^2.
\end{equation}

Finally, we consider the case $x \in A_1$, $y \in A_2$: here the pointwise bound~\eqref{eq:pointwise-reverse}
is not necessarily true, and so we give an integral bound instead.
Note that
\begin{align*}
    \||\tilde v(x) \tilde v(y)| - w\|_2^2
    &\ge \int_{A_1 \times A_1} (|\tilde v(x) \tilde v(y)| - w(x,y))^2 \, dx\, dy  \\
    &= \int_{A_1 \times A_1} \tilde v^2(x) \tilde v^2(y)\, dx\, dy \\
    &= \left(\int_{A_1} \tilde v^2(x)\, dx\right)^2
\end{align*}
and similarly
\begin{align*}
    \||\tilde v(x) \tilde v(y) - w\|_2^2
    &\ge \int_{A_2 \times A_2} (|\tilde v(x) \tilde v(y)| - w(x,y))^2 \, dx\, dy  \\
    &= \int_{A_2 \times A_2} (|\tilde v(x) \tilde v(y)| - c^2)^2\, dx\, dy \\
    &\ge \int_{A_2 \times A_2} (|\tilde v(x) \tilde v(y)| - \frac{9}{16} |\tilde v(x)| |\tilde v(y)|)^2\, dx\, dy \\
    &\ge \frac{1}{16} \int_{A_2 \times A_2} \tilde v^2(x) \tilde v^2(y) \, dx\, dy \\
    &= \left(\frac{1}{4} \int_{A_2} \tilde v^2(x) \, dx\right)^2.
\end{align*}
So then
\begin{align*}
    \int_{A_1 \times A_2} (|\tilde v(x) \tilde v(y)| - \bar v(x) \bar v(y))^2\, dx\, dy
    &= \int_{A_1 \times A_2} \tilde v^2(x) \tilde v^2(y) \, dx\, dy \\
    &= \int_{A_1} \tilde v^2(x)\, dx \int_{A_2} \tilde v^2(y)\, dy \\
    &\le 4 \| |\tilde v(x) \tilde v(y)| - w\|_2^2.
\end{align*}
Of course, we have the same bound if we replace $A_1 \times A_2$ with $A_2 \times A_1$.
Together with~\eqref{eq:most-of-the-integral}, this shows that
\[
    \||\tilde v(x) \tilde v(y)| - \bar v(x) \bar v(y)\|_2^2 \le C \||\tilde v(x) \tilde v(y)| - w\|_2^2.
\]
This is almost the same as the claim; the difference is that the right hand side above is
\[
    C \int \min\{\tilde v^2(x) \tilde v^2(y), (|\tilde v(x) \tilde v(y)| - c^2)^2\}\, dx\, dy,
\]
whereas right hand side in the claim has no absolute values.
But since $\Big||v(x) v(y)| - c^2\Big| \le |v(x)v(y) - c^2|$, the claim follows.
\end{proof}

Next, we handle the signs. Of course, $\tilde v(x)$ can be negated without
changing $\tilde v(x) \tilde v(y)$, but the rounding to $\{0, \sqrt{2e-1}\}$ is
affected by the sign. Therefore, we may need to replace $\tilde v$ by $- \tilde v$
in order to give bounds for the rounded version.

\begin{lemma}\label{lem:rounding-v}
    After possibly replacing $\tilde v$ by $-\tilde v$, the following holds.
    Let $v(x)$ be either $0$ or $\sqrt{2e-1}$, whichever is closer to $\tilde v(x)$. Then
    \[
        \|v(x) v(y) - \tilde v(x) \tilde v(y)\|_2^2 \le
        C \int V_{2e-1}(\tilde v(x) \tilde v(y))\, dx\, dy.
    \]
\end{lemma}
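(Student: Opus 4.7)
My plan is to reduce the claim to \Cref{lem:rounding-v-without-sign} by inserting two intermediate objects, $\bar v(x)\bar v(y)$ and $|\tilde v(x)\tilde v(y)|$, into a three-term triangle inequality. Set $c=\sqrt{2e-1}$ and define $P'=\{x:\tilde v(x)\ge c/2\}$ and $N'=\{x:\tilde v(x)\le -c/2\}$. A direct reading of the definitions gives $v=c\,\mathbf{1}_{P'}$ (since any $\tilde v(x)\le c/2$, in particular any negative value, rounds to $0$), while $\bar v=c(\mathbf{1}_{P'}+\mathbf{1}_{N'})$. Because $\tilde v(x)\tilde v(y)$ is invariant under $\tilde v\mapsto -\tilde v$ (an operation which swaps $P'$ and $N'$), I am free to fix the sign so that $|P'|\ge |N'|$, and I will do so.

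I will then write
\[
v(x)v(y) - \tilde v(x)\tilde v(y) = \bigl[v(x)v(y)-\bar v(x)\bar v(y)\bigr] + \bigl[\bar v(x)\bar v(y)-|\tilde v(x)\tilde v(y)|\bigr] + \bigl[|\tilde v(x)\tilde v(y)|-\tilde v(x)\tilde v(y)\bigr]
\]
and, using $(a+b+c)^2\le 3(a^2+b^2+c^2)$, bound each term's $L^2$-square by a constant multiple of $\int V_{2e-1}(\tilde v(x)\tilde v(y))\,dx\,dy$. The middle term is exactly \Cref{lem:rounding-v-without-sign}. The third term vanishes on $\{\tilde v(x)\tilde v(y)\ge 0\}$ and equals $-2\tilde v(x)\tilde v(y)$ elsewhere; since $V_{2e-1}(t)=t^2$ whenever $t\le c^2/2$ (in particular whenever $t<0$), its $L^2$-square is at most $4\int V_{2e-1}(\tilde v(x)\tilde v(y))\,dx\,dy$.

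For the first term, mutual disjointness of $\mathbf{1}_{P'},\mathbf{1}_{N'}$ gives
\[
v(x)v(y)-\bar v(x)\bar v(y) = -c^2\bigl[\mathbf{1}_{P'}(x)\mathbf{1}_{N'}(y)+\mathbf{1}_{N'}(x)\mathbf{1}_{P'}(y)+\mathbf{1}_{N'}(x)\mathbf{1}_{N'}(y)\bigr],
\]
whose $L^2$-square is $c^4(2|P'||N'|+|N'|^2)$. The sign choice $|N'|\le |P'|$ lets me fold the $|N'|^2$ piece into $|P'||N'|$. On $P'\times N'$ one has $\tilde v(x)\tilde v(y)\le -c^2/4$, so $V_{2e-1}(\tilde v(x)\tilde v(y))\ge c^4/16$ there, which yields $c^4|P'||N'|\le 16\int V_{2e-1}(\tilde v(x)\tilde v(y))\,dx\,dy$ and completes the bound.

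The step I expect to be the main obstacle is the $|N'|^2$ piece coming from $N'\times N'$: on this region $\tilde v(x)\tilde v(y)$ is close to $+c^2$, so $V_{2e-1}$ is small there and cannot be used directly to bound the order-$c^2$ discrepancy between $v(x)v(y)=0$ and $\tilde v(x)\tilde v(y)\approx c^2$. This is precisely why the sign choice cannot be dispensed with: it is used to absorb the uncontrolled $N'\times N'$ contribution into the $P'\times N'$ contribution, which \emph{is} controlled by $V_{2e-1}$ because $\tilde v(x)\tilde v(y)\approx -c^2$ on $P'\times N'$.
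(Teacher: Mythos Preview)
Your proof is correct and follows essentially the same approach as the paper: reduce to \Cref{lem:rounding-v-without-sign} via $\bar v$, control the sign discrepancy using that $V_{2e-1}(t)=t^2$ for $t\le c^2/2$ (so the $P'\times N'$ region where $\tilde v(x)\tilde v(y)<0$ is handled), and absorb the problematic $N'\times N'$ contribution into the $P'\times N'$ one via the sign choice. The paper uses the slightly different sign normalization $\int_{N'}\tilde v^2\le\int_{P'}\tilde v^2$ and works with $\|\tilde v(x)\tilde v(y)-v(x)v(y)\|_2^2-\|\tilde v(x)\tilde v(y)-\bar v(x)\bar v(y)\|_2^2$ directly rather than your three-term triangle inequality, but the substance is the same.
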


\begin{proof}
    Let $c = \sqrt{2e-1}$ and
    recall the definition of $\bar v$ from Lemma~\ref{lem:rounding-v-without-sign}:
    $\bar v(x)$ is either $0$ or $c$, whichever is closer to $|\tilde v(x)|$.
    In particular,
    \begin{align*}
        v(x) v(y) &= \bar v(x) \bar v(y) & &\text{ if $\tilde v(x) \ge -\frac c2$ and $\tilde v(y) \ge -\frac c2$} \\
        v(x) v(y) &= \bar v(x) \bar v(y) = 0 & &\text{ if $|\tilde v(x)| \le \frac c2$ or $|\tilde v(y)| \le \frac c2$} \\
        v(x) v(y) &= 0 \ne c^2 = \bar v(x) \bar v(y) & &\text{ if $\tilde v(x) < -\frac c2$ and $|\tilde v(y)| > \frac c2$, or vice versa}.
    \end{align*}

    Let $A_1 = \{x: \tilde v(x) < - c/2\}$ and $A_2 = \{x: \tilde v(x) >  c/2\}$, so that
    $v(x) v(y) \ne \bar v(x) \bar v(y)$ only on $A_1 \times A_1$, $A_1 \times A_2$, and $A_2 \times A_1$. Hence,
    \begin{align*}
        &\|\tilde v(x) \tilde v(y) - v(x) v(y)\|_2^2 - \|\tilde v(x) \tilde v(y) - \bar v(x) \bar v(y)\|_2^2 \\
        &\le \int_{A_1 \times A_1} (\tilde v(x) \tilde v(y) - v(x) v(y))^2 \, dx\, dy
        + 2\int_{A_1 \times A_2} (\tilde v(x) \tilde v(y) - v(x) v(y))^2 \, dx\, dy \\
        &= \left(\int_{A_1} v^2(x)\, dx\right)^2 + 2 \int_{A_1} v^2(x)\, dx \int_{A_2} v^2(x) \, dx.
    \end{align*}
    Because $\bar v$ is non-negative,
    $|\tilde v(x) \tilde v(y) - \bar v(x) \bar v(y)| \le \Big||\tilde v(x) \tilde v(y)| - \bar v(x) \bar v(y)\Big|$,
    and then Lemma~\ref{lem:rounding-v-without-sign} implies that
    \begin{multline}\label{eq:rounding-including-cross-term}
        \|\tilde v(x) \tilde v(y) - v(x) v(y)\|_2^2
        \le
        C \int V_{2e-1}(\tilde v(x) \tilde v(y)) \, dx\, dy \\
        + \left(\int_{A_1} v^2(x)\, dx\right)^2
        + 2\int_{A_1} v^2(x)\, dx \int_{A_2} v^2(x)\, dx
    \end{multline}
    and it remains to bound the last two terms.

    Now, on $A_1 \times A_2$, we have $\tilde v(x) \tilde v(y) \le -{c^2}/{4}$, meaning that
    $V_{2e-1}(v(x) v(y)) = v^2(x) v^2(y)$.
    Hence,
    \begin{align*}
        \int_{[0,1]^2}  V_{2e-1}(v(x)v(y))\, dx\, dy
        &\ge \int_{A_1 \times A_2} v^2(x) v^2(y)\, dx\, dy \\
        &= \int_{A_1} v^2(x)\, dx \int_{A_2} v^2(y)\, dy.
    \end{align*}
    Moreover, we may assume that
    $\int_{A_1} v^2(x)\, dx \le \int_{A_2} v^2(y)\, dy$ (if not, this becomes true
    when we replace $v$ by $-v$).
    Then we can remove the last term of~\eqref{eq:rounding-including-cross-term}
    at the cost of increasing $C$ by 3.
\end{proof}

\begin{proof}[Proof of \Cref{prop:approx-B11}]
    Let $u$ be the function that we called $v$ in \Cref{lem:rounding-v}, and define $v = u - \int u\, dx$.
    Then $v$ trivially satisfies \cref{it:approx-B11-mean}.

    By \Cref{lem:rounding-v}, \Cref{lem:rank-1} and the triangle inequality,
    \[
        \|u(x) u(y) - \Delta g\|_2^2 \le C \int V_{2e-1}(\tilde v(x) \tilde v(y))\, dx\, dy + O(\delta^3) = O(\delta^3),
    \]
    with the second inequality coming from \Cref{cor:rank-1-values}.
    Since $\|\Delta g\|_2^2 = \delta^2 \pm O(\delta^3)$ by \Cref{lem:entropy-cost}, we have
    \[
        \|\Delta g\|_2 - \|u(x) u(y) - \Delta g\|_2 \le \|u(x) u(y)\|_2 = \|u\|_2^2 \le \|\Delta g\|_2 + \|u(x) u(y) - \Delta g\|_2,
    \]
    and so $\|u\|_2^2 = \delta \pm O(\delta^2)$. Since $u$ only takes 2 values (one of them 0, one of them bounded
    away from zero), we have $a := \int u \, dx = O(\delta)$. Hence $\|v\|_2^2 = \|u\|_2^2 - a^2 = \delta \pm O(\delta^2)$, proving \cref{it:approx-B11-norm}. The bound $a = O(\delta)$ also proves \cref{it:approx-B11:values}.

    For \cref{it:approx-B11-residual},
    \[
        \|v(x) v(y) - \Delta g\|_2 \le \|u(x) u(y) - \Delta g\|_2 + \|v(x) v(y) - u(x) u(y)\|_2;
    \]
    the first term on the right is $O(\delta^{3/2})$, while the second term is $\|a^2 - 2 a u\|_2 = O(\delta^{3/2})$ also.

    Having proven the first four claims, we will show that the last two claims
    follow by perturbing $C_2$ and $C_1$ slightly. That is, we are
    going to redefine $v$: it will still take two values, but we will change
    the sets on which it takes those two values, and then we will recenter it
    to maintain the property $\int v = 0$. If we only change the values of $v$
    on a set of size $O(\delta^2)$, the triangle inequality implies that items
    1--4 still hold for the modified function $v$.

    Let $A_2 \subset C_1$ consist of those $x$ for which
    \[
        \int_{C_1} (g(x,y) - (1-e))^2\, dy > \int_{C_1} (g(x,y) - e)^2\, dy,
    \]
    and let $A_1 \subset C_2$ consist of those $x$ for which
    \[
        \int_{C_1} (g(x,y) - (1-e))^2\, dy < \int_{C_1} (g(x,y) - e)^2\, dy.
    \]
    Note that for $x, y \in C_1$ we have $g(x,y) = (1-e) + \Theta(\delta) + r(x,y)$,
    and so if $x \in A_2$ then $\int_{C_1} r(x,y)^2 \, dy = \Omega(\delta)$. Similarly,
    if $x \in C_2$ and $y \in C_1$ then $g(x,y) = e + \Theta(\delta) + r(x,y)$ and so
    if $x \in A_1$ then $\int_{C_1} r(x,y)^2\, dy = \Omega(\delta)$. Since $\|r\|_2^2 = O(\delta^3)$,
    it follows that $|A_1 \cup A_2| = O(\delta^2)$.

    We now redefine $\Omega_i$ as follows: set $\tilde C_2 = C_2 \cup A_2 \setminus A_1$ and
    $\tilde C_1 = C_1 \cup A_1 \setminus A_2$. Since $g$ is bounded
    and $|\Omega_i \Delta \tilde \Omega_i| =
    O(\delta^2)$,~\eqref{eq:every-vertex-Omega1}
    and~\eqref{eq:every-vertex-Omega2} hold with $\tilde \Omega_i$ in place of
    $\Omega_i$. Redefining the function $v$ to be $\sqrt{2e-1} + O(\delta)$ on
    $\tilde C_1$ and $O(\delta)$ on $\tilde C_2$ (with the $O(\delta)$ terms chosen so that
    $\int v = 0$), \Cref{prop:approx-B11} holds with this modified function $v$.
\end{proof}

\section{Estimating the bipodal parameters}\label{sec:best-bipodal}

At this point, we have only shown that an entropy-optimal triangle-deficient graphon
is \emph{approximately} bipodal. For this section, we will temporarily switch to studying
truly bipodal graphons.
Up to measure-preserving transformations of $[0,1]$, every bipodal graphon
takes the form
$$
g(x,y) = \begin{cases} a & x,y < c, \cr 
b & x,y > c, \cr 
d & x < c < y \hbox{ or } y < c < x. \end{cases}
$$
That is, the pode sizes are $c$ and $1-c$, and without loss of 
generality we can assume that $c \le 1-c$.
We define 
$$
\Delta a = a-e, \qquad \Delta b = b-e, \qquad \Delta d = d-e,
\qquad \mu = \frac{c \Delta a}{1-c} + \Delta d.
$$

The main result of this section is that in the class of bipodal graphons
with edge density $e$, triangle density $\tau = e^3 - \delta^3$,
and parameters $\Delta b = o(1)$, $c = o(1)$, $\Delta d = o(1)$, $\mu = o(\delta)$, and $|\Delta a| = \Omega(1)$,
there is a unique entropy-optimal bipodal graphon and we get good estimates on its parameters.

To be precise, Let $\calG_{e,\delta,\eta}$ be the set of bipodal graphons with edge-density $e$,
triangle density $e^3 - \delta^3$, and parameters
$a, b, c$ and $d$ satisfying $|b - e| < \eta \sqrt{\delta}$, $c < \eta$, $|d - e| < \eta$, $|a - e| > \eta$, and $|\mu| = O(\delta^{3/2})$.

\begin{proposition}\label{prop:bipodal-uniqueness-and-estimates}
    For every $\frac 12 < e < 1$ there exists $\eta > 0$ such that for any $\delta < \eta$ there is at most
    one graphon $g \in \calG_{e,\delta,\eta}$ maximizing $S(g)$. Moreover,
    any such optimal graphon has parameters satisfying
    \begin{eqnarray}
a & = & 1-e - \delta + O(\delta^2) \cr 
b & = & e - \frac{\delta^2}{2e-1} + O(\delta^3) \cr 
c & = & \frac{\delta}{2e-1} - \frac{2\delta^2}{2e-1} + O(\delta^3) \cr 
d & = & e + \delta + \frac{\delta^2}{eH'(e)} \left ( H'(e) - \left (e-\frac12 \right ) H''(e) \right ) + O(\delta^3). \end{eqnarray}
\end{proposition}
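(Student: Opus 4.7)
The plan is to use the edge and triangle constraints to reduce the four-parameter family of bipodal graphons to a two-parameter family, and then study the entropy and its critical points. Following the strategy in Section~\ref{sec:strategy}, the natural choice of free parameters is $a$ (the value of $g$ on the small pode, which in $\calG_{e,\delta,\eta}$ is bounded away from $e$ and close to $1-e$) and $\mu = c\Delta a/(1-c) + \Delta d$, a measure of the failure of the degree to be constant, known a priori to satisfy $|\mu| = O(\delta^{3/2})$ by the concentration-of-degree estimates of Section~\ref{sec:Initial_approximations}. To eliminate the other two variables, I would first use the defining relation for $\mu$ to write $\Delta d = \mu - c\Delta a/(1-c)$, and then the edge constraint $c^2\Delta a + 2c(1-c)\Delta d + (1-c)^2\Delta b = 0$ to obtain $(1-c)^2\Delta b = c^2\Delta a - 2c(1-c)\mu$. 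The triangle constraint $\tdens = e^3-\delta^3$ then reduces to a single equation determining $c$ in terms of $(a,\mu,\delta)$; expanding to leading order yields $c = \delta/(2e-1) + O(\delta^2)$, and the implicit function theorem gives $c$ as a real analytic function of $(a,\mu,\delta)$ throughout $\calG_{e,\delta,\eta}$.

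With this parameterization in place I would expand the entropy $S = c^2 H(a) + 2c(1-c) H(d) + (1-c)^2 H(b)$ as a function $F(a,\mu)$. The edge constraint cleans up the linear-in-$\Delta b,\Delta d$ terms, producing an expansion whose leading behaviour reproduces Lemma~\ref{lem:entropy-cost}. Writing out $\partial F/\partial a = 0$ and $\partial F/\partial \mu = 0$ and bootstrapping is the heart of the argument: starting from the a priori bounds $|\Delta b|,|\Delta d| = O(\delta)$ and $|\mu|=O(\delta^{3/2})$, one identifies the dominant terms in each equation and reads off rough expansions for $a,b,c,d$. The equation $\partial F/\partial\mu = 0$ is especially informative: when combined with the rough expansions it sharpens the bound to $|\mu|=O(\delta^2)$, and feeding this back into both equations and iterating yields the expansions in~\eqref{approximation:b11-abcd} to arbitrary accuracy.

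For uniqueness I would compute the $2\times 2$ Hessian of $F$ at any critical point of $F$ lying in $\calG_{e,\delta,\eta}$. Using the expansions from the previous step, one finds
\[
\mathrm{Hess}\, F \;=\; \begin{pmatrix} K_1\delta^2 + O(\delta^3) & O(\delta^2) \\ O(\delta^2) & K_2\delta^{-1} + O(1) \end{pmatrix},
\]
with explicit negative constants $K_1,K_2$ depending only on $e$; in particular the determinant is bounded below in absolute value by $\Omega(\delta)$. The linearization of the critical-point system at the leading-order approximate solution is therefore invertible with controlled inverse, so a Newton/contraction argument (equivalently, the implicit function theorem applied to the critical-point map $\nabla F = 0$) shows that there is at most one critical point of $F$ in $\calG_{e,\delta,\eta}$, and any such critical point must coincide with the asymptotic expansion claimed in the proposition.

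The main obstacle will be the bootstrap step. Because we do not know in advance that $a,b,c,d$ admit genuine power-series expansions in $\delta$, we cannot simply Taylor-expand everything in sight and match coefficients: several competing scales ($\delta$, $\delta^{3/2}$, $\delta^2$, $c\Delta a/(1-c)$, $\mu$) must be tracked simultaneously, and at each stage we must carefully justify which small terms may be dropped before the next round of refinement. The crucial improvement from $|\mu|=O(\delta^{3/2})$ (a priori, from concentration of degrees) to $|\mu|=O(\delta^2)$ (from $\partial F/\partial\mu=0$) is what ultimately allows the final expansions to be genuine power series in $\delta$ rather than in $\delta^{1/2}$, and is what unlocks the precise second-order coefficients in~\eqref{approximation:b11-abcd}.
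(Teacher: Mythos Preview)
Your proposal is correct and follows essentially the same approach as the paper: parameterize by $(a,\mu)$, use the edge constraint and the definition of $\mu$ to eliminate $b$ and $d$, use the triangle constraint to solve for $c$, then bootstrap the equations $\partial_a S = \partial_\mu S = 0$ (first improving $\mu$ from $O(\delta^{3/2})$ to $O(\delta^2)$, then pinning down $a_1 = -\delta + O(\delta^2)$), and finally compute the Hessian in exactly the form you wrote to obtain uniqueness. One small imprecision: the definition of $\calG_{e,\delta,\eta}$ only gives $|a-e|>\eta$, not $a$ close to $1-e$, so the leading-order triangle analysis initially yields only $c\Delta a/(1-c) = -\delta + o(\delta)$ and hence $c = -\delta/\Delta a + O(\delta^2)$; the refinement to $c = \delta/(2e-1) + O(\delta^2)$ comes after the first pass through $\partial_a S = 0$ forces $a = 1-e + O(\delta)$.
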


To prove \Cref{prop:bipodal-uniqueness-and-estimates}, we first show that the parameters of any optimal graphon
must satisfy the claimed estimates, so assume that $g$ is optimal.
We define 
$$
\Delta a = a-e, \qquad \Delta b = b-e, \qquad \Delta d = d-e,
\qquad \mu = \frac{c \Delta a}{1-c} + \Delta d.
$$
The edge density, triangle density and entropy of the graphon are:
\begin{eqnarray}
\edens(g) &=&  c^2 a + 2c(1-c) d + (1-c)^2 b, \label{e-formula} \\
\tdens(g) & = & c^3 a^3 + 3c^2(1-c)ad^2 + 3c(1-c)^2 bd^2 + (1-c)^3 b^3, 
\label{t-formula} \\
S(g) & = & c^2 H(a) + 2c(1-c)H(d) + (1-c)^2 H(b). \label{S-formula}
\end{eqnarray} 

\subsection{Expressing all quantities in terms of $a$ and $\mu$}
Our constraint on edge count is then
$$
0 = \edens - e = c^2 \Delta a + 2c(1-c) \Delta d + (1-c)^2 \Delta b.
$$ 
Combined with the definition of $\mu$, this gives 
\begin{equation}\label{b-sub}
\Delta b 
= \frac{c}{1-c} \left ( \frac {c \Delta a}{1-c} - 2\mu \right ), 
\qquad \Delta d = \mu - \frac{c \Delta a}{1-c}.
\end{equation}

We then turn to the triangle count. Plugging $a = e + \Delta a$,
$b = e + \Delta b$, and $d = e + \Delta d$ into equation 
(\ref{t-formula}) gives
\begin{eqnarray}\label{delta-tau}
\tau - e^3 & = & 3e^2 (c^2 \Delta a + 2c(1-c) \Delta d + (1-c)^2 \Delta b) \cr 
&& + 3e \left ( c(c \Delta a + (1-c)\Delta d)^2 + (1-c)
(c \Delta d + (1-c) \Delta b)^2 \right ) \cr 
&& + c^3 \Delta a^3 + 3c^2(1-c) \Delta a \Delta d^2 + 3c^2(1-c)^2 
\Delta b \Delta d^2 + (1-c)^3 \Delta b^3. 
\end{eqnarray}
The first line is $3e^2(\varepsilon - e) = 0$. The second line
works out to 
$$
3e [c((1-c)\mu)^2 + (1-c)(-c\mu)^2 ] = 3ec(1-c) \mu^2.
$$
The terms in the last line are 
$$(1-c)^3 \left (\frac{c \Delta a}{1-c} \right )^3,
$$
$$ 
3c(1-c)^2 \left [ \left ( \frac{c \Delta a}{1-c}\right )^3 - 2\mu
\left ( \frac{c \Delta a}{1-c}\right )^2 + \mu^2 \left ( \frac{c \Delta a}{1-c}\right ) \right ],
$$
$$
3c^2(1-c) \left [ \left ( \frac{c \Delta a}{1-c}\right )^3 - 4\mu
\left ( \frac{c \Delta a}{1-c}\right )^2 + 5\mu^2 \left ( \frac{c \Delta a}{1-c}\right ) -2\mu^3 \right ],
$$
and 
$$
c^3 \left [ \left ( \frac{c \Delta a}{1-c}\right )^3 - 6\mu
\left ( \frac{c \Delta a}{1-c}\right )^2 + 12\mu^2 \left ( \frac{c \Delta a}{1-c}\right ) -8\mu^3 \right ].
$$
Setting the sum of the second line of (\ref{delta-tau}) 
and these four terms equal to $-\delta^3$ gives 
\begin{eqnarray}\label{delta-sum}
- \delta^3 & = & \left ( \frac{c \Delta a}{1-c}\right )^3
- 6 c \mu \left ( \frac{c \Delta a}{1-c}\right )^2 \cr 
&& + 3c\mu^2 \left ( (1+3c) \left ( \frac{c \Delta a}{1-c}\right )
+ e(1-c) \right ) -(6c^2+2c^3) \mu^3.
\end{eqnarray}

The definition of $\calG_{e,\delta,\eta}$ implies that $\mu = O(\delta^{3/2})$.
This means that the terms in (\ref{delta-sum}) involving $\mu$ do not 
affect $\tdens-e^3$ to leading order, so we have  
$$
 \left ( \frac{c \Delta a}{1-c}\right )^3 = -\delta^3 + o(\delta^3).
$$
 In particular, $c$ is of order $\delta$. This in turn implies that the 
 $o(\delta^3)$ terms in (\ref{delta-sum}), which go as 
 $c^3\mu$ and $c\mu^2$ and higher
 powers of $c$ and $\mu$, are actually $O(\delta^{9/2})$, so 
 \begin{eqnarray*}
 \frac{c \Delta a}{1-c} &=& -\delta + O(\delta^{5/2}), \cr 
 c & = & \frac{-\delta}{\Delta a} + O(\delta^2). 
 \end{eqnarray*}
 The leading corrections to the approximation ${c \Delta a}/({1-c})
 \approx - \delta$ come from the terms
\vskip 0 truein \noindent
 $-6c \mu \left ( c\Delta a/({1-c})\right )^2$ and 
$3ec(1-c)\mu^2$ in the expansion of $\tau - e^3$. A priori we 
don't know which is larger, so for now we will keep both. (They will both 
turn out to be of order $\delta^5$). However,
all other terms are at least one power of $\delta$ smaller
than one (or both) or these terms. We can use the approximation
$c \approx -\delta/\Delta a$ to simplify these higher-order corrections:
\begin{eqnarray*}
\left ( \frac{c \Delta a}{1-c} \right )^3 &= & 
-\delta^3 +\frac{3e \mu^2 \delta -6 \mu \delta^3}{\Delta a} + 
O(\mu\delta^4, \mu^2\delta^2), \cr 
\frac{c \Delta a}{1-c} & = &  -\delta +  \frac{e \mu^2 -2\mu\delta^2}{\delta \Delta a} +O(\mu\delta^2, \mu^2),
\end{eqnarray*}
where $O(\mu\delta^4,\mu^2\delta^2)$ is shorthand for 
$O(\mu\delta^4)+O(\mu^2\delta^2)$. 
From that we compute $c$:
\begin{eqnarray} \label{c-estimate}
c & = & \frac{- \delta + \frac{e \mu^2 -2\mu\delta^2}{\delta \Delta a}}{ \Delta a - \delta + \frac{e \mu^2 -2\mu\delta^2}{\delta \Delta a}} + O(\mu\delta^2, \mu^2) \cr 
&=& \frac{\delta + \frac{2\mu\delta^2 - e\mu^2}{\delta \Delta a}}{ \delta - \Delta a} + O(\mu\delta^2, \mu^2). 
\end{eqnarray}
This then determines $\Delta b$ and $\Delta d$:  
\begin{eqnarray*} 
\Delta b &=& \frac{c}{1-c} \left ( \frac{c\Delta a}{1-c}-2\mu
\right ) \cr 
& = &  \frac{\delta^2}{\Delta a} + \frac{2 \mu \delta}{\Delta a}
+ \frac{4\mu \delta^2 - 2e\mu^2}{\Delta a}+ O(\mu\delta^3,\mu^2\delta), \cr 
\Delta d & = & \mu + \delta + \frac{2 \mu \delta^2-\mu^2e}{\delta \Delta a} + O(\mu \delta^2, \mu^2).
\end{eqnarray*}

Note that the constraints on $\edens$ and $\tdens$ are algebraic, so 
we could have expressed $b$, $c$ and $d$ as power series in $a$ and $\mu$. 
However, in a power series the derivative of a term with respect 
to $\mu$ is at most an order 
$\mu^{-1}$ larger than the term itself, while the derivative with respect to $a$ is at
most of the same order as the term. We can therefore turn our estimates of $(b,c,d)$ into
estimates of $(\partial_a b, \partial_a c, \partial_a d)$ and $(\partial_\mu b, \partial_\mu 
c, \partial_\mu d)$. Specifically: 
\begin{eqnarray*}
\partial_a b & = & \frac{-\delta^2}{\Delta a^2} - \frac{2\mu\delta}{\Delta a^2} + \frac{4 \mu^2 e - 8\mu \delta^2}{\Delta a^3} + O(\mu^2\delta, \mu\delta^3) \cr 
& = & -\frac{c^2}{(1-c)^2} -\frac{2 \mu \delta}{\Delta a^2} + \frac{2 \mu^2e -8\mu\delta^2}{\Delta a^3} + O(\mu^2\delta, \mu \delta^3), \cr 
\partial_a c & = & \frac{c}{\delta - \Delta a} + O(\mu\delta, \mu^2/\delta), \cr 
\partial_a d &=& \frac{e\mu^2 - 2\mu \delta^2}{\delta \Delta a^2} +O(\mu\delta^2,\mu^2).
\end{eqnarray*}
\begin{eqnarray*}
\partial_\mu b & = & \frac{2\delta}{\Delta a} + \frac{4\delta^2 - 4e\mu}{\Delta a^2} + O(\mu\delta, \delta^3), \cr 
\partial_\mu c & = & \frac{2 \mu e - 2 \delta^2}{\delta \Delta a^2} + O(\mu, \delta^2), \cr 
\partial_a d &=& 1 + \frac{2 \delta^2-2e\mu}{\delta \Delta a} +O(\delta^2,\mu).
\end{eqnarray*}
 
 \subsection{Solving $\partial_a S = \partial_\mu S = 0$}
We now solve the equations $\partial_a S = \partial_\mu S = 0$ in three passes. First we
solve $\partial_a S = 0$ to lowest order, obtaining $a$ to within $O(\delta)$. Using this value of $a$, we solve
$\partial_\mu S = 0$, showing that $\mu$ is a specific constant times $\delta^2$, up to 
an $O(\delta^3)$ error. Finally, we solve $\partial_a S = 0$ more precisely, determining 
$a$ to order $\delta$, with an $O(\delta^2)$ error. This then determines $(a,b,c,d)$ to 
the accuracy specified in \Cref{prop:bipodal-uniqueness-and-estimates}.

Since
\begin{equation}\label{WhatsS}
S = c^2 H(a) + 2c(1-c) H(d) + (1-c)^2 H(b), 
\end{equation}
\begin{eqnarray*}
\partial_a S &=& c^2 H'(a) + 2c(1-c)H'(d)\partial_a d+ (1-c)^2H'(b) \partial_a b \cr 
&& + 2 \partial_a c (H(d)-H(b) + c(H(a)+ H(b) - 2H(d))). 
\end{eqnarray*}
Keeping terms through $O(\delta^2)$, and noting that all discarded terms are of order
$\delta^3$ or higher, we have 
\begin{eqnarray*}
\partial_a S & = & c^2 H'(a) - c^2 H'(b) - \frac{2c}{\Delta a} \left (\delta H'(e) 
+ c(H(a)-H(e)\right ) + O(\delta^3) \cr 
& = & c^2 (H'(a)+H'(e)) -2c^2 \frac{H(a)-H(e)}{a-e} + O(\delta^3).
\end{eqnarray*}
Setting this equal to zero and dividing by $\delta^2$ gives 
$$ 
H'(a) + H'(e) - 2 \frac{H(a)-H(e)}{a-e} = O(\delta). 
$$
This implies that $a$ is within $O(\delta)$ of either $e$ or $1-e$. Since $a-e$ is assumed
not to be $o(1)$, this means that we can write 
$$ 
a = 1-e + a_1,
$$
where $a_1$ is $O(\delta)$. This allows us to expand $H(a)$ and $H'(a)$ a power series
in $a_1$ with quantified errors and also to quantify how much 
${-1}/{\Delta a}$ and ${1}/({\delta - \Delta a})$ differ from ${1}/({2e-1})$. 
In particular, 
$$
H(d)-H(b) + c(H(a)+H(b)-2H(d))  =  
H'(e)\delta + O(\mu, \delta^2).
$$

We now evaluate 
\begin{eqnarray*}
\partial_\mu S &=& 2c(1-c)H'(d)\partial_\mu d+ (1-c)^2H'(b) \partial_\mu b \cr 
&& + 2 \partial_\mu c (H(d)-H(b) + c(H(a)+ H(b) - 2H(d))) 
\end{eqnarray*}
through order $\mu, \delta^2$: 
\begin{eqnarray*} 
2c(1\!-\!c)H'(d) \partial_\mu d & = & 
2 \left ( \frac{\delta}{2e\!-\!1} + \frac{a_1 \delta - 2\delta^2}{(2e\!-\!1)^2}\right ) 
\Big (H'(e) \!+\! \delta H''(e) \Big ) \left (1 \!+\! \frac{2 e \mu \!-\! 2\delta^2}{\delta(2e\!-\!1)}
\right ) \!+\! o(\mu, \delta^2) \cr 
& = & H'(e) \left ( \frac{2\delta}{2e-1} + \frac{2a_1 \delta - 8\delta^2 + 4e \mu}{(2e-1)^2}
\right ) + \frac{2 H''(e) \delta^2}{2e-1} + o(\mu, \delta^2),  \cr 
(1\!-\!c)^2 H'(b) \partial_\mu b & = & H'(e) \left (1 - \frac{2\delta}{2e-1}\right ) 
\left ( \frac{-2\delta}{2e-1} + \frac{4\delta^2 - 4e\mu -2a_1\delta}{(2e-1)^2} \right )
+ o(\mu, \delta^2) \cr 
& = & H'(e) \left (\frac{-2\delta}{2e-1} + \frac{8\delta^2-4e\mu -2a_1 \delta}{(2e-1)^2}
\right ) +o(\mu, \delta^2). \end{eqnarray*} 
$$  
2 \partial_\mu c (H(d)-H(b)+c(H(a)+H(b)-2H(d)))  =  
2 \left ( \frac{2 e \mu - 2 \delta^2}{\delta (2e-1)^2} \right ) H'(e) \delta 
+o(\mu, \delta^2).
$$
Adding up the terms in $\partial_\mu S$ and setting the total equal
to zero, we have 
\begin{eqnarray}\label{WhatsMu}
\frac{4H'(e)(\mu e - \delta^2)}{(2e-1)^2} + \frac{2 H''(e) \delta^2}{2e-1} & = & o(\mu,\delta^2), \cr 
e \mu - \delta^2 & = & - \frac{H''(e) \delta^2(2e-1)}{2H'(e)} + o(\mu,\delta^2), \cr 
\mu & = & \frac{\delta^2}{e} \left ( 1 - \frac{H''(e)(2e  - 1)}{2H'(e)}\right ) + o(\delta^2,\mu).
\end{eqnarray}
Now that we have established that $\mu=O(\delta^2)$, we can check the 
order of the error terms in our estimate of $\partial_\mu S$. They are all 
$O(\delta^3)$, not just $o(\mu, \delta^2)$, so we have actually estimated $\mu$ to 
within $O(\delta^3)$.

Using our known value of $\mu$, we can restate our estimates for the derivatives of 
$(b,c,d)$ as 
\begin{eqnarray*}
\partial_a b & = & \frac{-\delta^2}{\Delta a^2} - \frac{2 \mu\delta}{\Delta a^2} +O(\delta^4) \cr 
& = & \frac{-c^2}{(1-c)^2} - \frac{2c\mu}{2e-1} + O(\delta^4), \cr 
\partial_a c &=& \frac{c}{\delta - \Delta a} + O(\delta^3) \cr
& = & \frac{c^2}{\delta} + O(\delta^3), \cr 
\partial_ad & = & O(\delta^3). 
\end{eqnarray*}

We then compute the first three terms in $\partial_aS:$
\begin{eqnarray*}
c^2 H'(a) & = & c^2(-H'(e) + a_1 H''(e) ) + O(\delta^4), \cr 
(1-c)^2  \partial_a b H'(b) & = & -c^2 H'(e) - \frac{2 c \mu}{2e-1} H'(e) + O(\delta^4), \cr 
2c(1-c)\partial_a d H'(d) & = & O(\delta^4). 
\end{eqnarray*}
The last term, namely $2 \partial_a c (H(d)-H(b) + c(H(a)+H(b)-2H(d)))$, works out to 
\begin{eqnarray*}
&&\frac{2c^2}{\delta}\left ( \frac{H''(e)\delta^2}{2} + H'(e) \left ( \delta + \mu + \frac{\delta^2}{2e-1} + c(-a_1 - 2\delta) \right ) \right ) + O(\delta^4) \cr 
&=& \frac{2c^2}{\delta} \left ( \frac{H''(e)\delta^2}{2} + H'(e) (\delta + \mu - c(a_1+\delta) \right ) + O(\delta^4).
\end{eqnarray*} 

Adding everything together, we have 
$$ 
\partial_a S = c^2(a_1+\delta) \left ( H''(e) - \frac{2 H'(e)}{2e-1} \right ) + O(\delta^4),
$$
so 
$$
a_1 = -\delta + O(\delta^2). 
$$
Applying this value, and the computed value of $\mu$, to our expressions for $(a,b,c,d)$,
we obtain the estimates of \Cref{prop:bipodal-uniqueness-and-estimates}.

\subsection{Uniqueness}\label{sec:unique}

Having proven that any optimizing bipodal graphon must have certain parameter estimates,
we now show that it is unique. We compute the Hessian of $S$ with respect to 
$a$ and $\mu$ in a neighborhood of the optimal graphon, specifically in the neighborhood
defined by the estimates in \Cref{prop:bipodal-uniqueness-and-estimates},
and show that is is well-approximated by a fixed non-degenerate matrix. That is, the 
equations $\partial_a S = \partial_\mu S = 0$ are approximately linear and non-degenerate 
in this region, and so have a unique solution.

The partial derivatives of $b$, $c$, and $d$ are, to leading order, 
\begin{eqnarray*}
\partial_a b = \frac{-\delta^2}{(2e-1)^2}+O(\delta^3), & 
\partial_\mu b = \frac{-2\delta}{2e-1} + O(\delta^2), & & \cr 
\partial^2_{aa} b = \frac{-2\delta^2}{(2e-1)^3} +O(\delta^3), &  
\partial^2_{\mu\mu}b = \frac{-4e}{(2e-1)^2} + O(\delta), & 
\partial^2_{a\mu}b = \frac{-2\delta}{(2e-1)^2} + O(\delta^2).
\end{eqnarray*}
\begin{eqnarray*}
\partial_a c = \frac{\delta}{(2e-1)^2}+O(\delta^2), & 
\partial_\mu c = \frac{2e\mu-2\delta^2}{\delta(2e-1)^2} + O(\delta^2), & \cr 
\partial^2_{aa} c = \frac{2\delta}{(2e-1)^3} +O(\delta^2), &  
\partial^2_{\mu\mu}c = \frac{2e}{\delta(2e-1)} + O(1), & 
\partial^2_{a\mu}c = \frac{4e\mu-4\delta^2}{\delta(2e-1)} + O(\delta^2).
\end{eqnarray*}
\begin{eqnarray*}
\partial_a d = \frac{e\mu^2-2\mu\delta^2}{\delta(2e-1)^2}+O(\delta^4), & 
\partial_\mu d = 1 + O(\delta), & \cr 
\partial^2_{aa} d = \frac{2e\mu^2 - 4\mu\delta^2}{\delta (2e-1)^3} +O(\delta^4), &  
\partial^2_{\mu\mu}d = \frac{2e}{\delta(2e-1)} + O(1) & 
\partial^2_{a\mu}d = \frac{2e\mu-2\delta^2}{\delta(2e-1)^2} + O(\delta^2).
\end{eqnarray*}

We compute $\partial^2_{aa} S$ to order $\delta^2$. Four terms contribute to that 
order, namely 
\begin{eqnarray*}
2 \partial^2_{aa}c (H(d)-H(b)) \approx \frac{4\delta^2 H'(e)}{(2e-1)^3}, && 
4c \partial_a c H'(a) \approx \frac{-4\delta^2 H'(e)}{(2e-1)^3}, \cr 
\partial^2_{aa}b H'(b) \approx \frac{-2\delta^2 H'(e)}{(2e-1)^3}, && 
c^2 H''(a) \approx \frac{\delta^2 H''(e)}{(2e-1)^2}.
\end{eqnarray*}
Adding up these terms gives 
$$ 
\partial^2_{aa} S = \frac{\delta^2}{(2e-1)^2} \left ( H''(e) - \frac{2 H'(e)}{2e-1}\right )
+ O(\delta^3). 
$$
In the expansion of $\partial^2_{\mu\mu} S$, the unique $O(\delta^{-1})$ term is 
$2(1-2c)\partial^2_{\mu\mu} d H'(d)$, giving us 
$$ \partial^2_{\mu\mu} S= \frac{4eH'(e)}{\delta(2e-1)} + O(1). 
$$ 
Finally, in computing $\partial^2_{a\mu}S$ there are two $O(\delta)$ terms, namely 
$(1-c)^2 \partial^2_{a\mu} b H'(b)$ and 
\newline $2(1-2c) \partial_a c \partial_\mu d H'(d)$.
The first gives ${-2\delta H'(e)}/{(2e-1)^2} + O(\delta^2)$ while the second gives 
${+2\delta H'(e)}/{(2e-1)^2} + O(\delta^2)$, so 
$$ 
\partial^2_{a\mu} S = O(\delta^2).
$$

Since the Hessian of $S$ takes the form 
$$ \begin{pmatrix} K_1 \delta^2+O(\delta^3) & O(\delta^2) 
\cr O(\delta^2) & K_2 \delta^{-1} + O(1) \end{pmatrix}, $$
for negative constants $K_1$ and $K_2$, the entropy has a unique maximizer, and indeed 
a unique critical point, in the region governed by the 
estimates of \Cref{prop:bipodal-uniqueness-and-estimates}.
At first glance, the off-diagonal terms appear as big as the $\partial^2_{aa}S$ term. However, replacing the variable $\mu$ with $\mu\delta^{-3/2}$ would convert the Hessian to the form 
$$\begin{pmatrix} K_1 \delta^2+O(\delta^3) & O(\delta^{7/2}) 
  \cr O(\delta^{7/2}) & K_2 \delta^{2} + O(\delta^3) \end{pmatrix},$$
in which the 
known diagonal terms more manifestly dominate the error terms.

This completes the proof of \Cref{prop:bipodal-uniqueness-and-estimates}.

\section{Averaging}\label{sec:averaging}

We now return to the study of entropy-optimal graphons that are not necessarily bipodal:
let $g$ be a graphon maximizing $S(g)$ subject to $\edens(g) = e$, $\tdens(g) = e^3 - \delta^3$,
and recall from \Cref{prop:approx-B11} that we can partition $[0, 1]$ into podes $C_1$ and $C_2$
so that $g$ is approximately bipodal with respect to these podes.
Recall also the definition of $v$ from \Cref{prop:approx-B11}, and that $g(x, y) \approx e - v(x) v(y)$.
Let $h$ be the graphon obtained by averaging $g$ on the podes $C_i \times C_j$. We will write $h = g + \Delta h$.

In this section we show that $g$ must be \emph{exactly} bipodal. Specifically, we show that if $g
\ne h$ then we can get a contradiction by constructing a $\tilde h$ with
$\tdens(\tilde h) = \tdens(g)$ but $S(\tilde h) > S(g)$.
This argument comes in four parts:
\begin{enumerate}
    \item we give a lower bound on $S(h) - S(g)$, and we show that if the bound
        is almost sharp then $\Delta h$ has a particular structure;
    \item we give an upper bound on $\tdens(h) - \tdens(g)$, and we show that if the bound
        is almost sharp then $\Delta h$ has a particular structure;
    \item we find a perturbation $\tilde h$ of $h$ that trades off entropy for triangles
        at essentially the best possible ratio of the preceding two bounds, and it follows
        that if $g$ is optimal then both of the preceding two bounds must be almost sharp; and finally,
    \item we show that the structure of $\Delta h$ implied by the first two parts
        is incompatible with \Cref{prop:approx-B11}.
\end{enumerate}

One basic observation before we start is that because $v(x) v(y)$ is constant on $\Omega_i \times \Omega_j$,
and because the mean of any function is the constant with the smallest $L^2$ distance to that function,
\begin{equation}\label{eq:Delta-h-size}
    \|\Delta h\|_2^2 \le \|g - v(x) v(y)\|_2^2 = O(\delta^3),
\end{equation}
where the last inequality follows from \Cref{prop:approx-B11}.

\subsection{The entropy change}

Let's compute the change in entropy that results from replacing each pode in $g$ by its average in $h$.
Recall that $C_2 \subset [0, 1]$ is the set on which $v(x) \approx 0$ and $C_1 = [0, 1]\setminus C_2$
is the set on which $v(x) \approx \sqrt{2e-1}$.
By \Cref{prop:approx-B11} and the fact that $|C_1| = \Theta(\delta)$,
\[
    h(x,y) = \begin{cases}
        h_{11} = 1-e + O(\delta^{1/2}) & \text{ on $C_1 \times C_1$} \\
        h_{12} = e + O(\delta) & \text{ on $C_2 \times C_1$ and $C_1 \times C_2$} \\
        h_{22} = e + O(\delta^{3/2}) & \text{ on $C_2 \times C_2$}
    \end{cases}
\]
In terms of the notation of \Cref{sec:best-bipodal}, $h_{11} = a$, $h_{12} = d$, $h_{22} = b$, and
$|C_1| = c$.

Because $h$ is obtained by averaging $g$, it has larger entropy. Our first bound shows that
it must be larger by at least about the ``optimal entropy-$L^2$ tradeoff constant,'' $C(e)$.

\begin{lemma}\label{lem:avg-entropy-change}
\[
    S(g) \le S(h) - C(e) (1 - O(\sqrt \delta)) \|g - h\|_2^2.
\]
\end{lemma}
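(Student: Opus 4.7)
The plan is to recast $S(h) - S(g)$ as a single integral of a shifted Kullback--Leibler functional and then invoke a trivially generalized form of \Cref{lem:C-prop}. For $q \in (0,1)$ define
\[
    D_q(p) := p \ln(p/q) + (1-p) \ln((1-p)/(1-q)),
\]
so that the $D$ from the paper is $D_e$. The identical proof of \Cref{lem:C-prop} yields the two-parameter bound $D_q(p) \ge C(q)(p - q)^2$ pointwise, with $C(q) = \ln(q/(1-q))/(2q - 1)$.

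First I would establish the identity $S(h) - S(g) = \int D_h(g)\,dx\,dy$, where $D_h(g)$ is shorthand for $D_{h(x,y)}(g(x,y))$. To see it, observe that $-H(p) - D_q(p) = p \ln q + (1-p)\ln(1-q)$ is affine in $p$. On a single quadrant $C_i \times C_j$, $h \equiv h_{ij}$ is constant and $\int_{C_i \times C_j}(g - h_{ij})\,dx\,dy = 0$ by the very definition of $h$ as the quadrant average of $g$; hence the affine piece integrates to the same thing whether applied to $g$ or to the constant $h_{ij}$, and we obtain $\int_{C_i \times C_j}(H(h) - H(g))\,dx\,dy = \int_{C_i \times C_j} D_{h_{ij}}(g)\,dx\,dy$. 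Summing over the four quadrants gives the identity.

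Next, applying the generalized \Cref{lem:C-prop} pointwise with $q = h(x,y)$ gives $D_h(g) \ge C(h)(g - h)^2$. From the displayed values $h_{11} = (1-e) + O(\sqrt \delta)$, $h_{12} = e + O(\delta)$, $h_{22} = e + O(\delta^{3/2})$ just above the lemma, together with smoothness of $C$ on $(0,1)$ and the symmetry $C(1-e) = C(e)$, I would conclude that $C(h_{ij}) = C(e) + O(\sqrt \delta)$ on every quadrant, the worst case being $C_1 \times C_1$. Thus
\[
    S(h) - S(g) = \int D_h(g)\,dx\,dy \ge \int C(h)(g - h)^2\,dx\,dy \ge C(e)(1 - O(\sqrt \delta))\,\|g - h\|_2^2,
\]
which is exactly the claim after rearranging.

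The real content is the identity in the second paragraph; once it is in hand, the rest is a mechanical application of \Cref{lem:C-prop} plus smoothness of $C$. The $O(\sqrt \delta)$ loss is sharp in the sense that it traces precisely to the weakest concentration of $h$ around its ideal value, namely $h_{11}$'s $O(\sqrt \delta)$-closeness to $1-e$, which in turn is forced by the small $\Theta(\delta^2)$ measure of $C_1 \times C_1$ having to absorb a chunk of $\|g - h\|_2^2 \le O(\delta^3)$.
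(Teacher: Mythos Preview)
Your proof is correct and follows essentially the same route as the paper: both introduce the two-parameter divergence $D(p,q)$, establish the identity $\int_{C_i\times C_j}(H(h)-H(g)) = \int_{C_i\times C_j} D(g,h_{ij})$ on each quadrant, apply the generalized \Cref{lem:C-prop} bound $D(p,q)\ge C(q)(p-q)^2$, and then use $C(h_{ij})=C(e)+O(\sqrt\delta)$ via the symmetry $C(e)=C(1-e)$ and smoothness of $C$. Your derivation of the key identity via the affinity of $p\mapsto -H(p)-D_q(p)$ is a slightly cleaner phrasing than the paper's direct computation, but the argument is otherwise the same.
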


\begin{proof}
Let $D(p, q) = p \ln \frac pq + (1-p) \ln \frac{1-p}{1-q}$,
and note that for any set $A \subset [0, 1]^2$,
\[
    \int_A D(g(x,y), q) \, dx\, dy = H(q) |A| - \int_A H(g(x,y))\, dx\, dy.
\]
In particular, if we apply this to $A =  \Omega_i \times \Omega_j$ then
\[
    \int_{\Omega_i \times \Omega_j} H(h(x,y)) - H(g(x,y))\, dx\, dy = \int_A D(g(x,y), h_{ij})\, dx\, dy.
\]
Recalling the definition of $C(e)$ from~\eqref{eq:C-def},
we have
\begin{equation}\label{eq:avg-entropy-change-ineq}
    \int_A D(g(x,y), h_{ij}) \, dx\, dy \ge C(h_{ij}) \int_A (g(x,y) - h_{ij})^2\, dx\, dy.
\end{equation}
Since $C(e) = C(1-e)$ and $C$ is continuous and differentiable at $e$,
$C(h_{ij}) = C(e) + O(\delta^{1/2})$ for every $i$ and $j$. Hence,
\[
    \int_A D(g(x,y), h_{ij}) \, dx\, dy \ge C(e)(1 - O(\sqrt \delta)) \int_A (g(x,y) - h(x,y))^2\, dx\, dy,
\]
and summing over $i$ and $j$ gives
\[
    S(g) \le S(h) - C(e) (1 - O(\sqrt \delta)) \|g - h\|_2^2.
    \qedhere
\]
\end{proof}

Next, we show that unless $g$ approximately takes the value $e$ and $1-e$, the
entropy gain of $h$ is even larger than in \Cref{lem:avg-entropy-change}. The
argument here is similar to that of~\Cref{lem:entropy-cost}; the difference is
that the cost here is measured in terms of $\|\Delta h\|_2^2$, instead of in terms
of $\|\Delta g\|_2^2$ (which is much larger).

\begin{lemma}\label{lem:avg-entropy-change-better-2}
    Fix $\eta > 0$. For sufficiently small $\delta$ (depending on $\eta$), if
    \[
        \int_{C_1 \times C_1} V_{2e-1}(-\Delta h)\, dx\, dy \ge \eta \|\Delta h\|_2^2
    \]
    or
    \[
        \int_{(C_1 \times C_1)^c} V_{2e-1}(\Delta h)\, dx\, dy \ge \eta \|\Delta h\|_2^2
    \]
    then
    \[
        S(g) \le S(h) - C(e)(1 + \Omega(\eta)) \|\Delta h\|_2^2.
    \]
\end{lemma}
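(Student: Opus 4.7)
The plan is to sharpen the proof of \Cref{lem:avg-entropy-change} by invoking the second-order improvement hidden in \Cref{lem:C-prop}. That earlier proof used only the pointwise bound $D(p,q) \ge C(q)(p-q)^2$; I would instead use the strengthened form
\[
    D(p, q) \ge \bigl[C(q) + c(q)(p-(1-q))^2\bigr](p-q)^2
\]
for some continuous $c(q) > 0$. This is the natural extension of \Cref{lem:C-prop} from the special case $q = e$ to arbitrary $q \in (0, 1)$; the proof is essentially identical, invoking $D(q, q) = \partial_p D(q, q) = 0$ and checking that $p = 1-q$ is the unique minimizer of $D(\,\cdot\,, q)/(\,\cdot\, - q)^2$.

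Applied on each block $\Omega_i \times \Omega_j$ and summed, this yields
\[
    S(h) - S(g) \ge C(e)(1 - O(\sqrt{\delta}))\|\Delta h\|_2^2 + (c(e) - O(\sqrt{\delta}))\sum_{i,j} \int_{\Omega_i \times \Omega_j} (g - (1 - h_{ij}))^2 (g - h_{ij})^2 \, dx\, dy,
\]
using $C(e) = C(1-e)$ and that each $h_{ij}$ lies within $O(\sqrt\delta)$ of $e$ or $1-e$. The task reduces to showing that the last sum is $\Omega(\eta)\|\Delta h\|_2^2$ under either hypothesis. Since $|(g - h_{ij}) - (g - (1-h_{ij}))| = |1 - 2h_{ij}| \ge 2e - 1 - O(\sqrt \delta)$, the larger of $(g-h_{ij})^2$ and $(g-(1-h_{ij}))^2$ is at least $(2e-1)^2/4 - O(\sqrt\delta)$, so pointwise
\[
    (g - h_{ij})^2 (g - (1-h_{ij}))^2 \ge \tfrac{(2e-1)^2}{4}(1 - O(\sqrt{\delta}))\, V_{1 - 2h_{ij}}(g - h_{ij}).
\]
On $C_1 \times C_1$, $g - h_{11} = -\Delta h$ and $1 - 2h_{11} = (2e-1) + O(\sqrt \delta)$, so the right-hand side is essentially $V_{2e-1}(-\Delta h)$; on the other three blocks, the identity $V_a(-x) = V_{-a}(x)$ together with $1 - 2h_{ij} = -(2e-1) + O(\sqrt \delta)$ shows that it is essentially $V_{2e-1}(\Delta h)$. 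The discrepancy between $V_{1-2h_{ij}}$ and $V_{\pm(2e-1)}$ is controlled using the Lipschitz bound $|V_{a_1}(x) - V_{a_2}(x)| \le |a_1 - a_2|(2|x| + |a_1| + |a_2|)$ combined with Cauchy-Schwarz, producing errors of order $O(\sqrt \delta)\|\Delta h\|_2^2$.

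Putting everything together, under either hypothesis I obtain
\[
    S(h) - S(g) \ge C(e)(1 - O(\sqrt \delta))\|\Delta h\|_2^2 + \Omega(\eta)\|\Delta h\|_2^2,
\]
and for $\delta$ small enough as a function of $\eta$ the $O(\sqrt \delta)$ error is absorbed into $\Omega(\eta)$, yielding $S(h) - S(g) \ge C(e)(1 + \Omega(\eta))\|\Delta h\|_2^2$. The main technical obstacle is the passage from $V_{1 - 2h_{ij}}$ to $V_{\pm(2e-1)}$: the parameters differ by only $O(\sqrt \delta)$, but a naive integration of the pointwise Lipschitz bound over $C_1 \times C_1$ (of area $O(\delta^2)$) produces an absolute error $O(\delta^{5/2})$ which could in principle overwhelm $\eta\|\Delta h\|_2^2$. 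The resolution is to refine the Lipschitz estimate so that the error scales with $|\Delta h|$ rather than with 1 (exploiting the fact that the discrepancy vanishes when $g - h$ is small), making the error multiplicative in $\|\Delta h\|_2^2$ and allowing the $\delta \to 0$ limit to absorb it.
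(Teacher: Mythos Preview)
Your proposal is correct and follows essentially the same route as the paper. The paper also upgrades \eqref{eq:avg-entropy-change-ineq} using the second-order part of \Cref{lem:C-prop}, obtaining on each block an extra term $\Omega\bigl(\int (g-(1-h_{ij}))^2(g-h_{ij})^2\bigr) = \Omega\bigl(\int V_{1-2h_{ij}}(\pm\Delta h)\bigr)$, and then passes from $V_{1-2h_{ij}}$ to $V_{\pm(2e-1)}$ via \Cref{lem:V-value-change}; that lemma is exactly the ``refined Lipschitz estimate multiplicative in $\|\Delta h\|_2^2$'' that you identify as the resolution of your technical obstacle.
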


\begin{proof}
    We follow the same argument as in \Cref{lem:avg-entropy-change}, but in~\eqref{eq:avg-entropy-change-ineq}
    we use the improved bound of \Cref{lem:C-prop} to obtain
    \[
        \int_{\Omega_i \times \Omega_j} D(g(x,y), h_{ij}) \, dx\, dy \ge \int_{\Omega_i \times \Omega_j} (C(h_{ij}) + \Omega((g - 1 + h_{ij})^2)(g - h_{ij})^2\, dx\, dy.
    \]
    In particular, compared to~\eqref{eq:avg-entropy-change-ineq}, the right hand side is increased by
    \[
        \Omega\left(
            \int_{\Omega_i \times \Omega_j} \min\{|g - 1 + h_{ij}|, |g - h_{ij}|\}^2\, dx\, dy
        \right)
        = \Omega\left(\int_{\Omega_i \times \Omega_j} V_{1 - 2 h_{ij}} (\Delta h)\, dx\, dy\right).
    \]
    Recalling that $h_{22} = 1 - e + o(1)$, and $h_{ij} = e + o(1)$ otherwise, \Cref{lem:V-value-change}
    implies that
    \[
        \int_{C_1 \times C_1} D(g(x,y), h_{22}) \, dx\, dy \ge C(h_{22}) |C_1|^2
        + \int_{C_1 \times C_1} V_{2e-1} (-\Delta h)\, dx\, dy - o(\|\Delta h\|_2^2),
    \]
    and
    \[
        \int_{\Omega_i \times \Omega_j} D(g(x,y), h_{ij}) \, dx\, dy \ge C(h_{ij}) |\Omega_i| |\Omega_j|
        + \int_{\Omega_i \times \Omega_j} V_{2e-1} (\Delta h)\, dx\, dy - o(\|\Delta h\|_2^2),
    \]
    for $(i, j) \ne (2, 2)$.
    Now we simply sum over $i,j$ as in the proof of \Cref{lem:avg-entropy-change}: under our assumptions,
    at least one of the $\int_{\Omega_i \times \Omega_j} V_{2e-1} (\pm \Delta h)\, dx\, dy$ terms
    gives an additional contribution of $\Omega(\eta \|\Delta h\|_2^2)$ compared to \Cref{lem:avg-entropy-change}.
\end{proof}

\subsection{The triangle change}

In this section we will control the triangle change between $g$ and $h$.
Recall that $T_{\Delta h}: L^2([0, 1]) \to L^2([0, 1])$ denotes the integral operator with kernel $\Delta h$.
Recall that
\begin{align}
    \tdens(h) - \tdens(g)
    &= 3 \int h(x,y) \Delta h(y,z) \Delta h(z,x) + 3 \int h(x,y) h(y,z) \Delta h(z,x) \notag \\ 
    & \phantom{=} + \int \Delta h(x,y) \Delta h(y,z) \Delta h(z,x) \notag \\
    &= 3 \int h(x,y) \Delta h(y,z) \Delta h(z,x) + O(\|\Delta h\|_2^3) \notag \\
    &\ge - 3 \|T_{\Delta h} v\|_2^2 + O(\delta^{3/2} \|\Delta h\|_2^2),
    \label{eq:triangle-expansion}
\end{align}
where the last line follows because (by \Cref{prop:approx-B11})
$h = e - v(x) v(y) - \tilde r(x,y)$ for some $\|\tilde r\|_2 \le \delta^{3/2}$,
and the term $\int e \Delta h(y,z) \Delta h(z,x)\, dx\, dy\, dz$ is non-negative.

\begin{lemma}\label{lem:orthogonality}
    \[
        \inr{v}{T_{\Delta h}v} = 0
    \]
\end{lemma}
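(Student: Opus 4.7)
The plan is to exploit two structural facts that make this orthogonality essentially tautological. First, from Proposition~\ref{prop:approx-B11} (and the final paragraph of its proof, which redefines $v$ on a set of measure $O(\delta^2)$), the function $v$ takes only two values: one constant on $C_1$ and another constant on $C_2$. Consequently, the tensor product $v(x)v(y)$ is piecewise constant on the four rectangles $C_i \times C_j$, taking the single value $k_{ij} := v|_{C_i} \cdot v|_{C_j}$ on $C_i \times C_j$. Second, by the very definition of $h$, on each $C_i \times C_j$ the graphon $h$ equals the mean of $g$ over that rectangle, so
\[
    \iint_{C_i \times C_j} \Delta h(x,y)\, dx\, dy = \iint_{C_i \times C_j} \bigl(h(x,y) - g(x,y)\bigr)\, dx\, dy = 0
\]
for each pair $(i,j)$.

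Combining these two observations, I would simply expand
\[
    \inr{v}{T_{\Delta h} v} = \iint v(x)\, \Delta h(x,y)\, v(y)\, dx\, dy = \sum_{i,j \in \{1,2\}} k_{ij} \iint_{C_i \times C_j} \Delta h(x,y)\, dx\, dy,
\]
and every term in the sum on the right vanishes by the integral identity above. This yields $\inr{v}{T_{\Delta h} v} = 0$ exactly, with no error terms.

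There is no real obstacle here; the lemma is a direct structural consequence of the fact that the averaging operation that produces $h$ from $g$ is orthogonal projection onto the finite-dimensional subspace of graphons that are constant on the rectangles $C_i \times C_j$, and the rank-one symmetric graphon $v(x)v(y)$ lies in that subspace. The only mild subtlety to flag is that in Proposition~\ref{prop:approx-B11} the function $v$ is recentered to have mean zero and the podes are slightly adjusted, but after those modifications $v$ is still two-valued with one value on each (redefined) pode, so the argument above applies verbatim with the final versions of $v$, $C_1$ and $C_2$.
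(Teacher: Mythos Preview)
Your proof is correct and is essentially the same as the paper's own argument: the paper simply writes $\inr{v}{T_{\Delta h} v} = \int \Delta h(x,y)\, v(x)\, v(y)\, dx\, dy$ and observes that $v(x)v(y)$ is constant on each pode while $\Delta h$ integrates to zero on each pode. You have merely spelled out the sum over rectangles and the projection interpretation explicitly.
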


\begin{proof}
    We can write
    \[
        \inr{v}{T_{\Delta h}v} = \int \Delta h(x,y) v(x) v(y)\, dx\, dy;
    \]
    recall that $v(x) v(y)$ is constant on every pode and $\Delta h(x,y)$ integrates to zero on each pode.
\end{proof}

Now define $u = \|v\|_2^{-2} T_{\Delta h} v$, define $w(x,y)$ by $\Delta h(x,y) = u(x) v(y) + u(y) v(x) + w(x,y)$,
and write $T_w$ for the integral operator with kernel $w$. By \Cref{lem:orthogonality},
$\inr{u}{v}= 0$. Then $u = T_{\Delta h}v = u + T_w v$, and so $T_w v = 0$. It follows that
\[
    \|T_{\Delta h}\|^2_2 = \|\Delta h\|_2^2 = 2 \|u\|_2^2 \|v\|_2^2 + \|w\|_2^2 = 2 \frac{\|T_{\Delta h} v\|_2^2}{\|v\|_2^2} + \|w\|_2^2.
\]
Since $\|w\|_2^2 \ge 0$, $\|T_{\Delta h} v\|_2^2 \le \frac 12 \|\Delta h\|_2^2 \|v\|_2^2$,
and then~\eqref{eq:triangle-expansion}
gives the following bound:
\begin{lemma}\label{lem:avg-triangle-change}
\[
    \tdens(h) \le \tdens(g) + \frac 32 \|\Delta h\|_2^2 \|v\|_2^2 - 3 \|w\|_2^2 \|v\|_2^2 + O(\delta^{3/2} \|\Delta h\|_2^2).
\]
\end{lemma}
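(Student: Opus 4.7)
The strategy is to combine a trilinear expansion of $\tdens(h) - \tdens(g)$ with the spectral decomposition of $\Delta h$ coming from \Cref{lem:orthogonality}. I would begin by expanding $\tdens(h) - \tdens(g)$ in powers of $\Delta h = h - g$. The linear contribution, proportional to $\int h(x,y) h(y,z) \Delta h(z,x)\, dx\, dy\, dz$, vanishes identically: since $h$ is bipodal, the composition kernel $(x,z) \mapsto \int h(x,y) h(y,z)\, dy$ is piecewise constant on the four blocks $C_i \times C_j$, while $\Delta h$ has mean zero on each such block by construction. The cubic contribution is $O(\|\Delta h\|_\infty \|\Delta h\|_2^2) = O(\|\Delta h\|_2^2)$, which is absorbed in the stated $O(\delta^{3/2} \|\Delta h\|_2^2)$ error since $\|\Delta h\|_2 = O(\delta^{3/2})$ by~\eqref{eq:Delta-h-size}. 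This reduces matters to the quadratic term $\int h(x,y) \Delta h(y,z) \Delta h(z,x)$.

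Next I would substitute the approximation $h = e - v(x) v(y) + \tilde r(x,y)$ from \Cref{prop:approx-B11}, with $\|\tilde r\|_2 = O(\delta^{3/2})$. The constant-$e$ piece produces $e \|\Delta d\|_2^2$, where $\Delta d(z) = \int \Delta h(y,z)\, dy$ is the difference of the degree functions of $h$ and $g$; since $h$'s pode-wise degree equals the pode-averaged degree of $g$, $\|\Delta d\|_2^2$ is dominated by the intra-pode variance of $d_g$, which is $O(\delta^4)$ by \Cref{lem:degree-concentration}. The rank-one piece $-v(x) v(y)$ unfolds, via the symmetry of $\Delta h$, to $\|T_{\Delta h} v\|_2^2$. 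The residual $\int \tilde r\, \Delta h\, \Delta h$ is controlled by $\|\tilde r\|_2 \cdot \|T_{\Delta h}^2\|_{\mathrm{HS}} \le \|\tilde r\|_2 \|\Delta h\|_2^2$, contributing another $O(\delta^{3/2} \|\Delta h\|_2^2)$ error.

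Finally, I would convert the resulting $\|T_{\Delta h} v\|_2^2$ into the combination of $\|\Delta h\|_2^2$ and $\|w\|_2^2$ appearing in the statement. Setting $u = \|v\|_2^{-2} T_{\Delta h} v$ and defining $w$ by $\Delta h(x,y) = u(x) v(y) + u(y) v(x) + w(x,y)$, the orthogonality $\langle u, v\rangle = 0$ from \Cref{lem:orthogonality} together with the identity $T_w v = 0$ (which follows from the definition of $u$) renders the three summands mutually orthogonal in $L^2([0,1]^2)$. The resulting Pythagorean identity $\|\Delta h\|_2^2 = 2 \|u\|_2^2 \|v\|_2^2 + \|w\|_2^2$ rearranges to $\|T_{\Delta h} v\|_2^2 = \tfrac{1}{2} \|v\|_2^2 (\|\Delta h\|_2^2 - \|w\|_2^2)$. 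Plugging this back into the quadratic bound from the previous step and collecting terms gives the claimed inequality.

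The main obstacle is bookkeeping at the right order. The principal worry is that the secondary contribution $e \|\Delta d\|_2^2$ threatens to dominate unless one uses the sharpened $\|d-e\|_2^2 = O(\delta^4)$ from \Cref{lem:degree-concentration} rather than the trivial Jensen bound $O(\|\Delta h\|_2^2)$; similarly, the $\|\tilde r\|_2 = O(\delta^{3/2})$ decay of the residual in \Cref{prop:approx-B11} is exactly what is needed to match the advertised $O(\delta^{3/2} \|\Delta h\|_2^2)$ error. With these two sharpenings in place, the rest of the argument is an orthogonal decomposition and a few lines of arithmetic.
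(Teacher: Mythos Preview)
Your approach is essentially the paper's: trilinear expansion, kill the linear term by block-constancy, control the cubic term, substitute $h \approx e - v\otimes v$ in the quadratic term, and finish with the Pythagorean decomposition $\|\Delta h\|_2^2 = 2\|u\|_2^2\|v\|_2^2 + \|w\|_2^2$. Two details need repair.

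First, the cubic estimate. The bound $O(\|\Delta h\|_\infty\,\|\Delta h\|_2^2) = O(\|\Delta h\|_2^2)$ is \emph{not} absorbed in $O(\delta^{3/2}\|\Delta h\|_2^2)$. What you need is the Hilbert--Schmidt inequality $|\Tr(T_{\Delta h}^3)| \le \|\Delta h\|_2^3$, after which $\|\Delta h\|_2^3 = \|\Delta h\|_2\cdot\|\Delta h\|_2^2 = O(\delta^{3/2}\|\Delta h\|_2^2)$ follows from~\eqref{eq:Delta-h-size}. This is what the paper uses.

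Second, and more substantively, your handling of the constant-$e$ piece does not work. Expanding $g = h - \Delta h$, the quadratic contribution is $-3\int h\,\Delta h\,\Delta h$, so the $e$ part is $-3e\|\Delta d\|_2^2$. Bounding this by $O(\delta^4)$ via \Cref{lem:degree-concentration} is correct as a magnitude estimate, but $O(\delta^4)$ is \emph{not} dominated by the stated error $O(\delta^{3/2}\|\Delta h\|_2^2)$: the quantity $\|\Delta h\|_2^2$ can be arbitrarily small (we are, after all, en route to proving it vanishes), so for $\|\Delta h\|_2^2 \ll \delta^{5/2}$ your bound is too weak. The paper resolves this by using the \emph{sign} instead: $-3e\|\Delta d\|_2^2 \le 0$, so the term can simply be dropped, and this is why the lemma is an inequality rather than an asymptotic identity. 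Degree concentration is not needed here.

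With these two fixes your argument coincides with the paper's. (Incidentally, the Pythagorean identity yields coefficient $-\tfrac32$ on $\|w\|_2^2\|v\|_2^2$ rather than the $-3$ printed in the statement; either constant suffices for the applications in \Cref{sec:completing-bipodal}.)
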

We should interpret \Cref{lem:avg-triangle-change} as saying that $\tdens(h)
\lesssim \tdens(g) + \frac 32 \|\Delta h\|_2^2 \|v\|_2^2$, with approximate
tightness only if $\|w\|_2^2$ is small compared to $\|\Delta h\|_2^2$.

Let's also note that $\|u\|_2$ must be small:
\begin{lemma}\label{lem:u-small}
    \[
        \|u\|_2^2 = O(\delta^2).
    \]
\end{lemma}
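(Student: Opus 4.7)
The proof should be short and follow directly from estimates already established. The plan is to bound $\|T_{\Delta h} v\|_2$ using the Hilbert–Schmidt / Cauchy–Schwarz estimate, and then divide by $\|v\|_2^2$, which is known to be of order $\delta$.

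\textbf{Step 1 (bound the numerator).} Since $T_{\Delta h}$ is an integral operator with kernel $\Delta h \in L^2([0,1]^2)$, Cauchy--Schwarz in the $y$ variable gives the pointwise bound
\[
    |(T_{\Delta h} v)(x)|^2 \le \left(\int \Delta h(x,y)^2\, dy\right)\|v\|_2^2,
\]
so $\|T_{\Delta h} v\|_2 \le \|\Delta h\|_2 \|v\|_2$. By \eqref{eq:Delta-h-size} we have $\|\Delta h\|_2^2 = O(\delta^3)$, and by item \cref{it:approx-B11-norm} of \Cref{prop:approx-B11} we have $\|v\|_2^2 = \delta + O(\delta^2)$. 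Therefore $\|T_{\Delta h} v\|_2 = O(\delta^{3/2}) \cdot O(\delta^{1/2}) = O(\delta^2)$.

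\textbf{Step 2 (divide).} By definition $u = \|v\|_2^{-2} T_{\Delta h} v$, and $\|v\|_2^{-2} = \delta^{-1}(1 + O(\delta))$. Hence
\[
    \|u\|_2 = \|v\|_2^{-2}\, \|T_{\Delta h} v\|_2 = O(\delta^{-1}) \cdot O(\delta^2) = O(\delta),
\]
and squaring gives $\|u\|_2^2 = O(\delta^2)$, which is the claim.

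There is no real obstacle here: the lemma is essentially a bookkeeping statement combining \eqref{eq:Delta-h-size} with the lower bound on $\|v\|_2$, and the only subtlety is recognizing that the crude operator-norm bound $\|T_{\Delta h}\|_{\mathrm{op}} \le \|\Delta h\|_2$ is already good enough because we are not asking for tightness — the savings coming from the near–rank-one structure of $\Delta g$ are not needed at this stage; they will be exploited later in the argument when $w$ (rather than $u$) is controlled.
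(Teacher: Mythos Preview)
Your proof is correct and essentially the same as the paper's. The only minor difference is that the paper invokes the orthogonal decomposition identity $\|\Delta h\|_2^2 = 2\|u\|_2^2\|v\|_2^2 + \|w\|_2^2$ (established just before the lemma) to get $2\|u\|_2^2\|v\|_2^2 \le \|\Delta h\|_2^2$, whereas you obtain the same inequality (without the harmless factor~$2$) directly from the Hilbert--Schmidt bound $\|T_{\Delta h}v\|_2 \le \|\Delta h\|_2\|v\|_2$; both then divide by $\|v\|_2^2 = \Theta(\delta)$ and use $\|\Delta h\|_2^2 = O(\delta^3)$.
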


\begin{proof}
    \[
        2 \|u\|_2^2 \|v\|_2^2 \le \|\Delta h\|_2^2 = O(\delta^3),
    \]
    where the second inequality follows from~\eqref{eq:Delta-h-size}.
    Finally, $\|v\|_2^2 = \Theta(\delta)$.
\end{proof}

In order for~\Cref{lem:avg-triangle-change} to be sharp, $\|w\|_2$
must be small compared to $\|\Delta h\|_2$. On the other hand,
\Cref{lem:avg-entropy-change-better-2} implies that if the entropy change
inequality of~\Cref{lem:avg-entropy-change} is sharp then $\Delta h$ must spend
most of its $L^2$ mass at particular values. Combining these two pieces of
tells us that $u$ must spend most of its $L^2$ mass at particular values.

\begin{lemma}\label{lem:u-structure}
    Fix $\eta \ge \delta > 0$ and suppose that $\|w\|_2^2 \le \eta \|\Delta h\|_2^2$.

    If $\int_{C_1 \times C_1} V_{2e-1}(-\Delta h)\, dx\, dy \le \eta \|\Delta h\|_2^2$
    then
    \[\int_{C_1} V_{\sqrt{2e-1}}(-u) \le O(\eta) \|u\|_2^2.\]

    If $\int_{(C_1 \times C_1)^c} V_{2e-1}(\Delta h)\, dx\, dy \le \eta \|\Delta h\|_2^2$
    then
    \[\int_{C_2} V_{\sqrt {2e-1}}(u) \le O(\eta) \|u\|_2^2.\]
\end{lemma}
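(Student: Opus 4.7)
The plan is to expand $\Delta h$ via its decomposition $\Delta h(x,y) = u(x)v(y) + u(y)v(x) + w(x,y)$ and exploit the fact that on the podes $v$ is close to the constants $\sqrt{2e-1}$ (on $C_1$) and $0$ (on $C_2$), so that each pointwise concentration hypothesis on $\Delta h$ transfers to the simpler function $u$.

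For the first claim I would restrict to $C_1 \times C_1$, where $v(x) = \sqrt{2e-1} + \phi(x)$ with $|\phi| = O(\delta)$, and write
\[
    -\frac{\Delta h(x,y)}{2e-1} = \tilde u(x) + \tilde u(y) + r(x,y), \qquad \tilde u := -u/\sqrt{2e-1},
\]
where $r$ collects the contributions of $w$ and of $\phi$. A direct Cauchy--Schwarz bound controls $\|r\|_{L^2(C_1 \times C_1)}^2$ by $O(\|w\|_2^2) + O(\delta^3 \|u\|_2^2)$; combining the identity $\|\Delta h\|_2^2 = 2\|u\|_2^2\|v\|_2^2 + \|w\|_2^2$ with $\|v\|_2^2 = \Theta(\delta)$, the hypothesis $\|w\|_2^2 \le \eta \|\Delta h\|_2^2$, and $\eta \ge \delta$, this reduces to $\|r\|_{L^2(C_1 \times C_1)}^2 = O(\eta \|\Delta h\|_2^2)$. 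Applying \Cref{lem:V-perturbation} and the homogeneity $V_{2e-1}(\cdot) = (2e-1)^2 V_1(\cdot/(2e-1))$ then yields
\[
    \int_{C_1 \times C_1} V_1(\tilde u(x) + \tilde u(y))\,dx\,dy = O(\eta \|\Delta h\|_2^2).
\]
I would next invoke \Cref{lem:V-product} on $\Omega = C_1$: its size hypotheses reduce to $\|u\|_2^2 = O(\delta^2) \ll |C_1|$ (from \Cref{lem:u-small}) and $\eta \|\Delta h\|_2^2 = O(\eta \delta^3) \ll |C_1|^2$, both fine for small $\delta$. The lemma gives $\int_{C_1} V_1(\tilde u)\,dx \le O(\eta \|\Delta h\|_2^2 / |C_1|)$, and the bounds $\|\Delta h\|_2^2 \le O(\delta \|u\|_2^2)$ (again via the decomposition identity plus the smallness of $\|w\|_2$) together with $|C_1| = \Theta(\delta)$ finish the job after rescaling via $V_{\sqrt{2e-1}}(-u) = (2e-1) V_1(\tilde u)$.

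For the second claim I would use only the subregion $C_1 \times C_2$ of $(C_1 \times C_1)^c$. There $v(x) \approx \sqrt{2e-1}$ and $v(y) = O(\delta)$, so
\[
    \frac{\Delta h(x,y)}{2e-1} = \frac{u(y)}{\sqrt{2e-1}} + \tilde r(x,y),
\]
with $\|\tilde r\|_{L^2(C_1 \times C_2)}^2 = O(\eta \|\Delta h\|_2^2)$ by the same style of estimate. Since the principal term depends only on $y$, applying \Cref{lem:V-perturbation} on $C_1 \times C_2$ produces a product integral
\[
    |C_1|\int_{C_2} V_1(u(y)/\sqrt{2e-1})\,dy \le O(\eta \|\Delta h\|_2^2);
\]
dividing by $|C_1| = \Theta(\delta)$ and rescaling gives the conclusion. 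Here \Cref{lem:V-product} is not needed, because the off-diagonal quadrant already supplies product structure between $C_1$ and $C_2$.

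The main obstacle is bookkeeping: the hypothesis lives at scale $V_{2e-1}(\pm \Delta h)$ whereas the target is at the finer scale $V_{\sqrt{2e-1}}(\pm u)$, so one must verify that all error contributions --- the $w$ term, the $\phi$ term on $C_1 \times C_1$, and the $1/|C_1|$ loss from \Cref{lem:V-product} --- combine into $O(\eta \|u\|_2^2)$. The assumption $\eta \ge \delta$ is precisely what guarantees that the $\phi$-induced errors, naturally of size $\delta^2 \|u\|_2^2$, are absorbed by the main bound.
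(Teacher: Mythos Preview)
Your proposal is correct and follows essentially the same route as the paper. The only cosmetic difference is that you normalize by $2e-1$ at the outset and work with $V_1$ throughout, whereas the paper keeps the original scale, strips off $w$ via \Cref{lem:V-perturbation}, and then passes from $V_{(2e-1)/v_2}$ to $V_{\sqrt{2e-1}}$ using \Cref{lem:V-value-change}; both approaches handle the $O(\delta)$ discrepancy between $v_2$ and $\sqrt{2e-1}$ and arrive at the same bound.
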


\begin{proof}
    Let $v_2 = \sqrt{2e-1} \pm O(\delta)$ be the value that $v$ takes on $C_1$.
    On $C_1 \times C_1$, we have $\Delta h = v_2 (u(x) + u(y)) + w(x,y)$.
    If $\int_{C_1 \times C_1} V_{2e-1}(-\Delta h)\, dx\, dy \le \eta \|\Delta h\|_2^2$
    then (by \Cref{lem:V-perturbation})
    \[
        \int_{C_1 \times C_1} V_{2e-1}(-v_2 (u(x) + u(y)))\, dx\, dy \le O(\eta) \|\Delta h\|_2^2 + O(\|w\|_2^2) = O(\eta) \|\Delta h\|_2^2.
    \]
    Since $(2e-1) / v_2 = \sqrt{2e-1} + O(\delta)$, \Cref{lem:V-value-change} implies that
    \[
        \int_{C_1 \times C_1} V_{\sqrt{2e-1}}(-u(x) - u(y))\, dx\, dy \le O(\eta) \|\Delta h\|_2^2 + O(\delta^2 \|u\|_2^2) \le O(\eta) \|\Delta h\|_2^2.
    \]
    We apply \Cref{lem:V-product} with $K = O(\eta) \|\Delta h\|_2^2$, $\Omega = C_1$, and $\mu$
    the Lebesgue measure on $\Omega$.
    (Note that the hypotheses of \Cref{lem:V-product} are satisfied when $\delta$ is small, because $|C_1| = \Theta(\delta)$,
    while $\|u\|_2^2 = O(\delta^2)$ and $\|\Delta h\|_2^2 = O(\delta^3)$).
    Since $|C_1| = \Theta(\delta)$, this gives
    \[
        \int_{C_1} V_{\sqrt{2e-1}}(-u(x))\, dx = O(\eta \|\Delta h\|_2^2 / \delta) = O(\eta \|u\|_2^2),
    \]
    as claimed.

    Now we prove the second claim.
    For $(x, y) \in C_2 \times C_1$, $\Delta h(x,y) = v_2 u(x) + w(x,y) \pm O(\delta u(y))$.
    Since $\|\delta u(y)\|_2^2 = \delta^2 \|u\|_2^2 = \Theta(\delta \|\Delta h\|_2^2)$, if we
    set $\tilde w(x,y) = w(x,y) \pm O(\delta u(y))$ then $\Delta h(x,y) = v_2 u(x) + \tilde w(x,y)$,
    and $\|\tilde w\|_2^2 \le (\eta + O(\delta)) \|\Delta h\|_2^2 = O(\eta) \|\Delta h\|_2^2$.
    Assuming that
    \[
        \int_{C_2 \times C_1} V_{2e-1}(\Delta h) \, dx\, dy \le \eta \|\Delta h\|_2^2,
    \]
    \Cref{lem:V-perturbation} implies that
    \[
        \int_{C_2 \times C_1} V_{2e-1}(v_2 u(x)) \, dx\, dy \le O(\eta) \|\Delta h\|_2^2,
    \]
    and since $|C_1| = \Theta(\delta)$, we have
    \[
        \int_{C_2} V_{(2e-1)/v_2}(u(x)) \, dx \le O(\delta \eta) \|\Delta h\|_2^2 = O(\eta \|u\|_2^2).
    \]
    The second claim follows from \Cref{lem:V-value-change} and the fact that $v_2 = \sqrt{2e-1} + O(\delta)$.
\end{proof}

\section{The improvement lemma}\label{sec:improvement}

The goal of this section is to show that if we have a bipodal graphon with approximately the expected parameter values,
then we can trade off triangle
density for entropy at a prescribed rate.
The idea behind the perturbation is simple: we
lower the triangle density by increasing the size of the small pode,
and tweaking the other parameters to keep the edge density constant.

Recall that (up to measure-preserving transformations), a bipodal graphon $h$ may be parametrized as
\[
h(x,y) = \begin{cases} a & x,y < c, \cr 
b & x,y > c, \cr 
d & x < c < y \hbox{ or } y < c < x, \end{cases}
\]
and we may assume that $c \le 1-c$.

Recall that $\calG_{e,\delta,\eta}$ is the set of bipodal graphons with edge-density $e$,
triangle density $e^3 - \delta^3$, and parameters
$a, b, c$ and $d$ satisfying $|b - e| < \eta \sqrt{\delta}$, $c < \eta$, $|d - e| < \eta$, $|a - e| > \eta$, and $|\mu| = O(\delta^{3/2})$.
The point of this definition is that our earlier estimates imply that for any $e$, some $\eta > 0$ depending on $e$,
and all sufficiently small $\delta > 0$ depending on $e$, the bipodal graphon $h$ obtained by starting with an optimal
graphon $g$ and averaging on podes belongs to $\calG_{e,\delta,\eta}$.

\begin{proposition}\label{prop:model-perturbation}
    For any graphon $h \in \calG_{e,\delta,\eta}$
    and for any $\tdens(h) > t \ge \tdens(h) - \delta^3$,
    there is a bipodal graphon $\tilde h$ with edge density $e$,
    triangle density $t$, and entropy
    \[
        S(\tilde h) \ge S(h) - C(e) \frac{2 (\tdens(h) - t)}{3\delta} (1 + O(\eta)).
    \]
\end{proposition}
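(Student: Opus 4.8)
The plan is to construct $\tilde h$ explicitly as a one-parameter deformation of $h$: enlarge the small pode from size $c$ to size $c + \rho$, keeping the two ``block values'' essentially fixed but adjusting $b$ (and possibly $d$) by lower-order amounts to restore the edge-density constraint, and choosing $\rho$ to hit the prescribed triangle density $t$. The intuition, spelled out in the section intro, is that the leading-order dependence of $\tdens$ on the pode size is what matters: from the formula $\tdens(g) = c^3a^3 + 3c^2(1-c)ad^2 + 3c(1-c)^2bd^2 + (1-c)^3b^3$, increasing $c$ at fixed block values moves $\tdens$ at rate $\partial_c \tdens$, and the estimates of \Cref{prop:bipodal-uniqueness-and-estimates} (which hold on $\calG_{e,\delta,\eta}$) give $a = 1-e+O(\delta)$, $b,d = e+O(\delta)$, $c = \Theta(\delta)$, so one computes $\partial_c \tdens \approx 3(a^3 - e^3)\cdot\text{(something)}$; more carefully one finds the relevant rate is $\approx -\tfrac{3}{c}\,\delta^3 \cdot \tfrac{1}{?}$ — the point being that a change $\Delta\tdens = t - \tdens(h) \in [-\delta^3, 0)$ requires $\rho = O(\Delta\tdens \cdot c/\delta^3) = O(\Delta\tdens/\delta^2)$, which is small relative to $c$.

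Concretely, I would proceed in three steps. First, \emph{set up the deformation and solve the constraints.} Parametrize $\tilde h$ by $(\tilde a, \tilde b, \tilde c, \tilde d)$ with $\tilde c = c + \rho$, $\tilde a = a$, $\tilde d = d$, and let $\tilde b$ be determined by $\edens(\tilde h) = e$; this is a smooth (algebraic) function of $\rho$ with $\tilde b = b + O(\rho)$ since $\partial_c \edens$ is bounded while $\partial_b \edens = (1-c)^2 = \Theta(1)$. Then the triangle density $\tdens(\tilde h)$ becomes a smooth function of $\rho$ alone; compute its derivative at $\rho = 0$ using the explicit formulas and the parameter estimates, show it is $-K\delta^2 (1+O(\eta))$ for an explicit constant $K = \tfrac{3}{2(2e-1)}$ roughly (matching the $\tfrac{2}{3\delta}$ rate in the statement once we invert), and invoke the intermediate value theorem / implicit function theorem to find the required $\rho$ with $\tdens(\tilde h) = t$; the bound $\rho = O((\tdens(h)-t)/\delta^2)$ follows. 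Second, \emph{estimate the entropy change.} Write $S(\tilde h) - S(h)$ using \eqref{S-formula}; the dominant term comes from converting a slice of the big pode (where $H \approx H(e)$) into the cross-region and small pode, and from the change in $b$. Using $H(a) - H(e) = H(1-e) - H(e) + O(\delta) = O(\delta)$... — actually the key is that the entropy \emph{loss} is governed by the same $D(\cdot)$ mechanism as in \Cref{lem:entropy-cost}: enlarging the small pode forces more of the graphon to take the value $\approx 1-e$ instead of $\approx e$, at entropy cost $\approx D(1-e)$ per unit $L^2$ mass, i.e.\ $\approx C(e)$ times the $L^2$ mass created, which is $\Theta(\rho\delta)$. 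Third, \emph{combine.} Matching the $L^2$ mass $\Theta(\rho\delta)$ against $\rho = O((\tdens(h)-t)/\delta^2)$ gives entropy cost $O((\tdens(h)-t)/\delta)$ with the sharp constant $C(e)\cdot \tfrac{2}{3}$, yielding exactly the claimed inequality. Throughout, all error terms are controlled because every parameter of $h$ is within the $\calG_{e,\delta,\eta}$-window, so the $O(\eta)$ slack absorbs the discrepancies between the true block values and their ideal values $(1-e, e, e, \Theta(\delta))$.

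The main obstacle I expect is \emph{getting the constant right and honestly tracking which perturbations are ``free.''} Naively one might also perturb $d$ to second order; one must check whether $\tilde a = a$, $\tilde d = d$ is the entropy-optimal choice of deformation or whether a smarter trade-off (e.g.\ also lowering $d$ slightly, as the true optimizer does) changes the leading constant. I believe it does not — to leading order the $\tfrac{2}{3}$ factor is forced by the cubic-vs-quadratic scaling of $\tdens - e^3$ vs.\ $\|g-e\|_2^2$ in $\delta$, the same $2/3$ that appears via Cauchy–Schwarz / the $t^{2/3}$ concavity in Lemma~\ref{lem:entropy-cost} and Lemma~\ref{lem:degree-concentration} — but verifying that no cross-terms between the $\rho$-perturbation and the $\mu$-direction spoil this requires carefully expanding $\tdens(\tilde h)$ and $S(\tilde h)$ to the relevant order, using the derivative estimates for $\partial_a, \partial_\mu$ of $(b,c,d)$ already assembled in \Cref{sec:best-bipodal}. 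The bookkeeping of error terms (ensuring everything not of the stated order is genuinely $O(\eta)$ times the main term, uniformly for $t$ in the whole interval $[\tdens(h)-\delta^3, \tdens(h))$) is the part most likely to be tedious.
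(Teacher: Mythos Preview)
Your overall strategy matches the paper's: construct a one-parameter bipodal deformation $h_\rho$ of $h$, compute the rates $d\tau/d\rho$ and $dS/d\rho$, and show their ratio is $\frac{2C(e)}{3\delta}(1+O(\eta))$. The paper carries this out almost exactly as you sketch it, including the use of the intermediate value theorem to hit the target $t$ and the observation that the entropy cost is governed by $2cD(a)\approx \frac{2\delta}{2e-1}D(1-e)$.

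There is, however, a genuine issue with your specific choice of deformation. You propose to fix $a$ and $d$ and vary $c$ (adjusting only $b$); the paper instead fixes $a$ and $\mu = \frac{c\Delta a}{1-c}+\Delta d$ and varies $c$ (so $d$ moves along with $c$ via $d(s)=e+\mu-\frac{c(s)\Delta a}{1-c(s)}$). This is not a cosmetic difference. With $d$ held fixed, increasing $c$ by $\rho$ changes $\mu$ by $\approx -(2e-1)\rho$, and the term $3ec(1-c)\mu^2$ in the triangle formula~(\ref{delta-sum}) then contributes $\approx 6e(2e-1)\delta\rho$ to $d\tau/d\rho$. This has two consequences. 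First, the relative error in $d\tau/d\rho$ at $\rho=0$ is $O(\delta\mu_0/\delta^2)=O(\delta^{1/2})$ rather than the $O(\delta)=O(\eta)$ the paper obtains, so the $(1+O(\eta))$ control in the statement is not immediate. Second, and more seriously, integrating gives $\tau(h_\rho)-\tau(h)\approx -3(2e-1)\delta^2\rho + 3e(2e-1)\delta\rho^2$, which has its minimum at $\rho=\delta/(2e)$ with value $-\frac{3(2e-1)}{4e}\delta^3$. Since $\frac{3(2e-1)}{4e}<1$ for all $e\in(\tfrac12,1)$, your deformation \emph{cannot} reach the full range $t\ge \tau(h)-\delta^3$ claimed in the proposition. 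Physically, holding $d$ fixed while growing $c$ makes the degree function increasingly non-constant, and the resulting positive contribution to $\tau$ eventually overwhelms the negative one.

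You correctly flag this as the ``main obstacle,'' but your belief that fixing $d$ does not change the leading constant is only half right: the constant is correct for infinitesimal $\rho$, but the range and the error bounds are spoiled. The fix is exactly what the paper does---reparametrize by $(a,c,\mu)$ and hold $\mu$ constant. Then all the $\mu$-dependent corrections in~(\ref{delta-sum}) stay uniformly $O(\delta^3)$, and the computation goes through cleanly over the whole interval. (Incidentally, your invocation of \Cref{prop:bipodal-uniqueness-and-estimates} to get $a=1-e+O(\delta)$ for arbitrary $h\in\calG_{e,\delta,\eta}$ is not justified---that proposition is about the \emph{optimal} bipodal graphon---but in the application $h$ comes from averaging and \Cref{prop:approx-B11} supplies the needed estimate on $a$.)
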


Out of the four parameters $a$, $b$, $c$, and $d$, the edge-density constraint $\edens(h) = e$ can be used to eliminate one.
Defining
\[
\Delta a = a-e, \qquad \Delta d = d-e,
\qquad \mu = \frac{c \Delta a}{1-c} + \Delta d,
\]
we can change parameters to express everything in terms of $\Delta a$, $c$, and $\mu$; and
in~\eqref{delta-sum} we showed that the triangle deficit can be expressed as
\begin{equation}\label{eq:b11-reparametrization-restated}
    - \delta^3 = c^3 \alpha^3 - 6 c^3 \mu \alpha^2 + 3 c \mu^2((1 + 3 c)\alpha + e(1-c)) - (6 c^2 + 2 c^3) \mu^3,
\end{equation}
where $\alpha = {\Delta a}/({1-c})$.
To prove \Cref{prop:model-perturbation} we will simply increase $c$ while keeping $\Delta a$ and $\mu$ constant.
In terms of the original parameters, this is equivalent to setting
\begin{align*}
    c(s) &= c + s \\
    a(s) &= a \\
    d(s) &= e + \mu - \frac{c(s) \Delta a}{1 - c(s)} \\
    b(s) &= e - \frac{2 c(s) (1 - c(s)) d(s) + c^2(s) a(s)}{(1 - c(s))^2}.
\end{align*}
In particular, everything is a rational function of $s$ and is smooth for $s < 1 - c$.
Let $h_s$ be the bipodal graphon with these parameters.

If $\mu = O(\delta^{3/2})$, we see immediately from~\eqref{eq:b11-reparametrization-restated} that
\[
    \frac{d}{ds} \tau(h_s) = 3 c^2(s) \alpha^3 + O(\delta^3) = 3 \delta^2 (2e-1) + O(\delta^3 + \delta^2 s).
\]
Moreover, the $O(\delta^3)$ term is uniform over $0 \le s \le {\delta}/({2e-1})$, because $\mu$ is constant in $s$ and
$|c(s)| \le (1 + (2e-1)^{-1}) \delta$ for $s$ in this range. Since $-\delta^3$ is continuous in $s$, for any $t \in [\tdens(h) - \eta \delta^3, \tdens(h))$
there is an
\[
    s_* = \frac{\tdens(h) - t}{3 \delta^2 (2e-1)} (1 + O(\eta))
\]
such that $\tau(h_{s_*}) = t$.

Next, we consider the change in entropy as a function of $s$.
Recall that if $\int h = e$ then $\int H(h) = H(e) - \int D(h)$. 
Therefore, the change in entropy is the same as the change in $- \int D(h)$; that is,
\[
    \frac{d}{ds} S(h_s) = - \frac{d}{ds} \int D(h_s(x, y))\, dx\, dy.
\]
We can write
\[
    \int D(h_s(x, y))\, dx\, dy
    = c^2(s) D(a) + 2c(s) (1-c(s)) D(d(s)) + (1-c(s))^2 D(b(s)).
\]
Now, $D$ is differentiable with $D(e) = 0$, and $D$ is locally quadratic around $e$.
That is, $D'(e + \epsilon) = O(\epsilon)$, $D(e + \epsilon) =
O(\epsilon^2)$, and $D(1 - e + \epsilon) = D(1-e) + O(\epsilon)$.
In particular, because $|b(s) - e| = O(\eta \sqrt{\delta})$ and $|d(s) - e| = O(\eta)$,
the derivative in $s$ is bounded by the contribution of the first term:
\[
    \frac{d}{ds} \int D(h_s(x, y))\, dx\, dy = 2 c(s) c'(s) D(a) + O(\delta \eta) = 2 \frac{\delta}{2e-1} D(1-e) + O(\delta \eta),
\]
where the second equality holds for $s \le O(\eta \delta)$. Plugging in $s = s_*$ and applying the Fundamental Theorem of Calculus,
we obtain
\[
    D(h_{s_*}) = D(h) - \frac{2 \delta D(1-e)}{2e-1} (1 + O(\eta)) s_*
        = D(h) - C(e) \frac{2(\tau(h) - t)}{3\delta}(1 + O(\eta)),
\]
completing the proof of \Cref{prop:model-perturbation}.

\section{Completing the proof of bipodality}
\label{sec:completing-bipodal}

Recall that $h$ was obtained by averaging $g$ on podes. Define $\epsilon = \|g - h\|_2^2$,
and recall that $\epsilon = O(\delta^3)$. We assume for a contradiction that $g$ is optimal but not bipodal,
and hence $\epsilon > 0$.

First, note that \Cref{prop:approx-B11} and \Cref{lem:degree-concentration} imply that for any $\eta_1 > 0$,
if $\delta > 0$ is sufficiently small then $g \in \calG_{e,\delta,\eta_1}$. Indeed, part \eqref{it:approx-B11-norm}
of \Cref{prop:approx-B11} implies the required estimates on the parameters $a$, $b$, $c$, and $d$, while
\Cref{lem:degree-concentration} implies the required estimate on $\mu$.
In particular, we may apply \Cref{prop:model-perturbation} to construct a graphon $\tilde h$ with
$\tau(\tilde h) = \tau(g)$.
\Cref{lem:avg-triangle-change} implies that $\tdens(h) \le \tdens(g) + (3/2)
\epsilon \delta + o(\epsilon \delta)$, and so
\Cref{prop:model-perturbation} with $t = \tdens(g)$ gives a graphon
$\tilde h$
with $\tdens(\tilde h) = \tdens(g)$ and
\[
    S(g) \ge S(\tilde h) = S(h) - \frac 23 C(e) \frac{\tdens(h) - \tdens(g)}{\delta} (1 - O(\eta_1))
    \ge S(h) - C(e) \epsilon (1 + O(\eta_1)).
\]
In particular, for any fixed $\eta > 0$ we can find $\eta_1 > 0$ small enough so that
the conclusion of \Cref{lem:avg-entropy-change-better-2}
fails to hold for all sufficiently small $\delta$.

On the other hand, for any fixed $\eta > 0$, if $\|w\|_2^2 \ge \eta \|\Delta h\|_2^2$ then
\Cref{lem:avg-triangle-change} gives the improved bound $\tdens(h) \le \tdens(g) + (3/2) \epsilon \delta (1 - \Omega(\eta))$, meaning that (by \Cref{prop:model-perturbation}, if $\eta_1$ is small compared to $\eta$)
\[
    S(g) \ge S(h) - C(e) \epsilon (1 - \Omega(\eta)),
\]
which contradicts \Cref{lem:avg-entropy-change} (for $\delta$ sufficiently small).
We conclude that
\begin{equation}\label{eq:w-small}
    \|w\|_2^2 \le \eta \|\Delta h\|_2^2,
\end{equation}
and that the conclusion of Lemma~\ref{lem:avg-entropy-change-better-2} fails, meaning that
\begin{equation}\label{eq:V-small}
    \max\left\{\int_{C_1 \times C_1} V_{2e-1}(-\Delta h)\,dx\, dy,
    \int_{(C_1 \times C_1)^c} V_{2e-1}(\Delta h)\, dx\, dy\right\}
    \le \eta \epsilon.
\end{equation}

Finally, we will show that~\eqref{eq:w-small} and~\eqref{eq:V-small} together contradict \Cref{prop:approx-B11}.
Fix $x \in C_1$.
Recall that $v: [0, 1] \to \R$ takes two values;
let $v_1 \approx \sqrt{2e-1}$ be the value $v$ takes on $C_1$
and let $v_2 = \Theta(\delta)$ be the value $v$ takes on $C_2$.
Then $\Delta h_x = v_1 u + u(x) v + w_x$
and $g_x = h_x - \Delta h_x$, where $h_x(y) = (1 - e) + o(1)$ for $y \in
C_1$. Note that $h_x$ and $v$ are constant on $C_1$, with $h_x = 1 - e + o(1)$
and $v = \sqrt{2e-1} + o(1)$.
Recalling that $u = \|v\|_2^{-2} T_{\Delta h} v$, note that
if $u(x) \approx -\sqrt{2e-1}$ then $h_x - u(x) v \approx e$
on $C_1$ (and is constant on $C_1$).
To be precise, we consider two cases.

If $u(x) \le -(1/2)(\sqrt{2e-1} + \eta)$ then $h_x - u(x) v \ge 1/2 + \Omega(\eta)$ on $C_1$.
In particular, $h_x - u(x) v$ is closer to $e$ than it is to $1-e$, and so \cref{it:approx-B11-Omega2}
of \Cref{prop:approx-B11}
implies that $w_x$ will need to be large to compensate:
we have $g_x = h_x - u(x) v - v_1 u - w_x \ge  1/2 + \Omega(\eta) - v_1 u - w_x$
and so
by the triangle inequality,
\begin{align}
    \left(\int_{C_1} (g_x(y) - e)^2 \, dy\right)^{1/2}
    &\le |e - \frac 12 - \Omega(\eta)| + \|v_1 u + w_x\|_2 \notag \\
    &\le |e - \frac 12 - \Omega(\eta)| + \|w_x\|_2 + O(\delta).
    \label{eq:close-to-e}
\end{align}
On the other hand,
\begin{align}
    \left(\int_{C_1} (g_x(y) - (1 - e))^2 \, dy\right)^{1/2}
    &\ge |1 - e - \frac 12 - \Omega(\eta)| - \|v_1 u + w_x\|_2 \notag \\
    &\ge |1 - e - \frac 12 - \Omega(\eta)| - \|w_x\|_2 - O(\delta).
    \label{eq:close-to-1-e}
\end{align}
\Cref{it:approx-B11-Omega2} of \Cref{prop:approx-B11}
implies that~\eqref{eq:close-to-e} can be at most
$O(\delta^2)$ smaller than~\eqref{eq:close-to-1-e}.  Since $e > 1/2$, for
$\eta > 0$ sufficiently small (and $\delta > 0$ sufficiently small depending on
$\eta$),
\[
    \|w_x\|_2 \ge \Omega(\eta) - O(\delta) = \Omega(\eta) = \Omega(\eta |u(x)|).
\]

For the other case, if $u(x) \ge -(1/2) (\sqrt{2e-1} + \eta)$ then (for $\eta$ sufficiently small)
$u$ is bounded away from $-\sqrt{2e-1}$ and so
$V_{\sqrt{2e-1}}(-u(x)) \ge \Omega(u^2(x))$.

Let $A = \{x \in C_1: u(x) \le -(1/2) (\sqrt{2e-1} + \eta)\}$ (i.e. the set of $x$ for which
the first case happens), and let $B = C_1 \setminus A$. Then $\|w_x\|_2^2 \ge \Omega(\eta^2 u^2(x))$ for $x \in A$
and $u^2(x) \le O(V_{\sqrt{2e-1}}(-u(x)))$ for $x \in B$.
If $\int_A u^2(x)\, dx \ge (1/2) \int_{C_1} u^2(x)\, dx$ then
\begin{equation}\label{eq:Omega2-w-large}
    \int_{C_1} \|w_x\|_2^2 \, dx \ge \Omega(\eta^2) \int_{C_1} u^2(x)\, dx;
\end{equation}
and if $\int_B u^2(x)\, dx \ge (1/2) \int_{C_1} u^2(x)\, dx$ then
\begin{equation}\label{eq:Omega2-V-large}
    \int_{C_1} V_{\sqrt{2e-1}}(-u(x))\, dx \ge \Omega(1) \int_{C_1} u^2(x)\, dx.
\end{equation}

We will give a similar argument for $C_2$: fix $x \in C_2$ and note
that $g_x = h_x - u(x) v - v_2 u - w_x$, where $h_x \approx e$ on $C_1$
and $v \approx \sqrt{2e-1}$ on $C_1$, and $v_2 = O(\delta)$.
This time, if $u(x) \ge (1/2) (\sqrt{2e-1} + \eta)$, \cref{it:approx-B11-Omega1} of \Cref{prop:approx-B11} will imply
that $\|w_x\|_2$ is large, while in the other case we will have $V_{\sqrt{2e-1}}(u(x))$ bounded below by $u^2(x)$.

Let $A = \{x \in C_2: u(x) \ge (1/2) (\sqrt{2e-1} + \eta)\}$ and let $B = C_2 \setminus A$.
Analogously to~\eqref{eq:close-to-e} and~\eqref{eq:close-to-1-e}, if $x \in A$ then $h_x - u(x) v$ takes a constant value of at most $1/2 - \Omega(\eta)$, which
is $\Omega(\eta)$ closer to $1-e$ than it is to $e$.
Then $g_x \le 1/2 - \Omega(\eta) - v_2 u - w_x$, and so
\[
    \left(\int_{C_1} (g_x(y) - (1-e))^2\, dy\right)^{1/2} \le |1-e - \frac 12 + \Omega(\eta)| + \|w_x\|_2 + O(\delta)
\]
and
\[
    \left(\int_{C_1} (g_x(y) - e)^2\, dy\right)^{1/2} \le |e - \frac 12 + \Omega(\eta)| - \|w_x\|_2 - O(\delta),
\]
and it follows from \cref{it:approx-B11-Omega1} of \Cref{prop:approx-B11} that
\[
    \|w_x\|_2 \ge \Omega(\eta |u(x)|).
\]
On the other hand, if $x \in B$ then $V_{\sqrt{2e-1}}(u(x)) \ge \Omega(u^2(x))$ (for $\eta$ sufficiently small).
As before, depending on whether $\int_A u^2$ or $\int_B u^2$ is larger, we have either
\begin{equation}\label{eq:Omega1-w-large}
    \int_{C_2} \|w_x\|_2^2 \, dx \ge \Omega(\eta^2) \int_{C_2} u^2(x)\, dx;
\end{equation}
or
\begin{equation}\label{eq:Omega1-V-large}
    \int_{C_2} V_{\sqrt{2e-1}}(u(x))\, dx \ge \Omega(1) \int_{C_2} u^2(x)\, dx.
\end{equation}

Finally, we consider whether $\int_{C_2} u^2\, dx$ or $\int_{C_1} u^2\, dx$ is larger.
In the former case, we apply~\eqref{eq:Omega1-w-large} or~\eqref{eq:Omega1-V-large}; in the
latter case, we apply~\eqref{eq:Omega2-w-large} or~\eqref{eq:Omega2-V-large}. We conclude
that either $\|w\|_2^2 \ge \Omega(\eta^2) \|u\|_2^2$, in which case $\|\Delta h\|_2^2 = \|u\|^2_2 \|v\|^2_2 + \|w\|_2^2$ and $\|v\|_2^2 = \Theta(\delta)$ imply that
\[
    \|w\|_2^2 \ge \Omega(\epsilon),
\]
or else at least one of $\int_{C_2} V_{\sqrt{2e-1}}(u(x))\, dx$ or $\int_{C_1} V_{\sqrt{2e-1}}(u(x))\, dx$
is $\Omega(\|u\|_2^2)$, which by \Cref{lem:u-structure} implies that
\[
    \max\left\{\int_{C_1 \times C_1} V(\Delta h)\,dx\, dy, \int_{(C_1 \times C_1)^c} V(\Delta h)\, dx\, dy\right\} \ge \Omega(\epsilon).
\]
The first case contradicts~\eqref{eq:w-small}; the second contradicts~\eqref{eq:V-small}.
In either case, this contradiction implies that $\epsilon = 0$ and so $g$ is bipodal.
This concludes the proof of \Cref{thm:b11}.

\section{The region ${\calO}_2$}\label{sec:f11}

Having established the properties of the region ${\calO}_1$, where $\tdens$ is 
slightly less than $\edens^3$, 
we turn our attention to the region where the triangle density
$\tdens$ is slightly greater than $\edens^3$, the region ${\calO}_2$. In this region, the optimizing graphon was already proven to be bipodal in 
\cite{KRRS2}. To complete the proof of 
Theorem \ref{thm:f11}, we must determine the parameters of that optimal bipodal graphon. 

\begin{remark} The results of \cite{KRRS2} apply when $0 < \edens < 1/2$ as well 
as when $1/2 < \edens < 1$, as do all the computations in this section. However, they do not 
apply {\em at} $\edens=1/2$, insofar as many quantities go as negative powers of $1-2\edens$.
By contrast, our results for $\tdens < \edens^3$ only apply when $\edens > 1/2$. The situation 
when $\edens < 1/2$ and $\tdens < \edens^3$ is qualitatively different and is the subject of ongoing work. 
\end{remark}

Our analysis of ${\calO}_2$ is qualitatively similar to that of ${\calO}_1$, 
with one very important difference. 
Instead of everything being a power series in $\delta$, where $(\edens,\tdens) = (e, e^3-\delta^3)$, everything is
a power series in $\Delta \tau$, where $(\edens,\tdens) = (e,e^3+\Delta \tau)$. 
We use the constraint $\edens=e$
to express $b$ in terms of $a$, $c$, and $d$, and then use the value of 
$\tau$ to express $c$ in terms of $a$, $d$ and $\Delta \tau$. We then solve the 
equations $\partial_d S=0$ and $\partial_a S = 0$ iteratively. Knowing
$a$ and $d$ to a given order allows us to compute $c$ to 
a certain order, which then allows us to compute $a$ and $d$ to one more
power of $\Delta \tau$ than before, which allows us to compute $c$ 
to one more power of $\Delta \tau$, and the process repeats. 
We will exhibit the first few steps in this
process, enough to compute the entropy up to an $O(\Delta \tau^3)$ error.

Extended to all orders, the result would be a set of asymptotic series for 
$(a,b,c,d)$, and thus for the entropy
$S$. However, the analyticity of of parameters 
only implies that we have convergent Taylor series expansions around 
points in the interior of $\calO_2$. Our expansion in powers of $\Delta \tau$
is around a point $(\edens,\tdens)= (e,e^3)$ on the boundary 
of $\calO_2$, so convergence of our series is not guaranteed. 

We therefore need a separate
iterative method to use at fixed (non-infinitesimal) 
values of $\Delta \tau$. As before, 
we linearize the equations $\partial_a S = \partial_d S = 0$. As long as the 
Hessian of $S$ is well-approximated by a fixed matrix $M_0$ in a 
neighborhood of the approximate
values of $(a,b,c,d)$ that we have derived for $(\edens, \tdens)=(e,e^3+\Delta \tau)$, then the 
iteration 
\begin{equation}\label{notNewton}
\begin{pmatrix} a_{new} \cr d_{new} \end{pmatrix} 
= \begin{pmatrix} a_{old} \cr d_{old} \end{pmatrix}
- M_0^{-1} \begin{pmatrix} \partial_aS(a_{old}, d_{old}) \cr \partial_dS(a_{old}, d_{old})
\end{pmatrix} 
\end{equation}
is guaranteed to converge to the solution to $\partial_aS=\partial_dS=0$. 
\footnote{Using the fixed matrix $M_0$ is less efficient, but more robust, than applying Newton's method.} 

\subsection{Expressing quantities in terms of $a$ and $d$}
As before, we begin with the two identities:
\begin{equation} \label{deltab}
\Delta b = - \left ( \frac{c}{1-c} \right )^2 \Delta a - 2 \left ( \frac{c}{1-c} \right )
\Delta d
\end{equation}
and 
\begin{eqnarray}\label{eq:DeltaTau1}
\Delta \tau & = & 3ec(1-c) \left ( \frac{c}{1-c} \Delta a + \Delta d \right)^2 \cr 
&& + c^3 \Delta a^3 + 3c^2(1-c)\Delta a \Delta d^2 + 3c(1-c)^2 \Delta b \Delta d^2
+ (1-c)^3 \Delta b^3. 
\end{eqnarray} 
The difference is that now $\Delta d$ is of order 1, so the first term
in the expansion of $\Delta \tau$, 
which is $\Omega(c)$, dominates the remaining terms, which are $O(c^2)$. 
Using 
(\ref{deltab}), we have 
\begin{eqnarray*}
3c(1\!-\!c)^2\Delta b\Delta d^2 & = & -3c^3 \Delta a \Delta d^2 - 6c^2(1-c) \Delta d^3, \cr 
(1\!-\!c)^3\Delta b^3 & = & -8c^3 \Delta d^3 - \frac{12 c^4}{1\!-\!c} \Delta a \Delta d^2 
\!-\! \frac{6c^5}{(1\!-\!c)^2} \Delta a^2 \Delta d \!-\! \frac{c^6}{(1\!-\!c)^3} \Delta a^3 \cr 
3ec \left ( \frac{c}{1\!-\!c} \Delta a \!+\! \Delta d \right)^2 & = & 
3ec\left (\Delta d^2 + c \Delta d(2\Delta a - \Delta d) + \frac{c^2}{(1-c)^2} \Delta a^2 \right ), \end{eqnarray*}
which in turn gives 
\begin{eqnarray}\label{eq:DeltaTau2}
\Delta \tau & = & 3ec \Delta d^2 \cr 
&& + c^2 \left ( 6e \Delta a \Delta d - 3e \Delta d^2 + 3 \Delta a \Delta d^2- 6 \Delta d^3
\right ) \cr 
&& + c^3 \left ( \frac{3e \Delta a^2}{1-c} + \Delta a^3 - 6 \Delta a \Delta d^2 
-2 \Delta d^3 \right ) \cr 
&& - \frac{12c^4}{1-c}\Delta a \Delta d^2 - \frac{6c^5}{(1-c)^2} \Delta a^2 \Delta d
- \frac{c^6}{(1-c)^3} \Delta a^3. 
\end{eqnarray}
This expression is exact. Given estimates of $\Delta d$ to order $\Delta \tau^k$ and 
$\Delta a$ to order $\Delta \tau^\ell$, it determines $c$ to order $\Delta \tau^{k+1}$ or 
$\Delta \tau^{\ell+2}$, whichever is larger. As long as the estimates of $\Delta a$ and 
$\Delta d$ only involve integer powers of $\Delta \tau$, so will the estimates of 
$c$. 

In particular, given values of $\Delta a$ and $\Delta d$, we have 
\[
c  =  \frac{\Delta \tau}{3d \Delta d^2} - c^2 \left ( \frac{2 \Delta a}{\Delta d} - 1
+ \frac{\Delta a - 2 \Delta d}{e} \right ) + O(\Delta \tau^3)
\]
The fact that $c$ can be expanded as a polynomial in $c$ whose coefficients are rational
functions of $a$ and $d$ means that we can compute $\partial_a c$ and $\partial_d c$ to
within $O(\Delta \tau^3)$ by taking the derivative of this expression. Differentiating implicitly,
we have 
\begin{eqnarray*} 0 & = & \partial_a c \left ( 
3e \Delta d^2 + 6c(2e\Delta a \Delta d - e\Delta d^2 + \Delta a \Delta d^2 - 2 \Delta^2)
+ O(c^2) \right ) \cr 
&& + c^2 (6e \Delta d + 3 \Delta d^2) + O(\Delta \tau^3),
\end{eqnarray*}
which gives 
\[
\partial_a c = -c^2 \left ( \frac{2}{\Delta d} + \frac{1}{e} \right ) + O(\Delta \tau^3).
\]
Similarly, 
\begin{eqnarray*} 0 & = & \partial_d c \left ( 
3e \Delta d^2 + 6c(2e\Delta a \Delta d - e\Delta d^2 + \Delta a \Delta d^2 - 2 \Delta \tau^2)
+ O(c^2) \right ) \cr 
&& + 6ec \Delta d + 6c^2(e \Delta a -e\Delta d + \Delta a \Delta d - 3 \Delta d^3 ) 
+ O(\Delta \tau^3).
\end{eqnarray*}
After a little algebra, this yields 
\[ \partial_d c = \frac{-2c}{\Delta d} \left ( 1 + c \left ( 
1 - 3 \frac{\Delta a}{\Delta d} - \frac{\Delta a}{e} + \frac{\Delta d}{e} \right ) 
\right ) + O(\Delta \tau^3).
\] 
Taking the derivative of (\ref{deltab}) with respect to $a$ and $d$ then gives 
\begin{eqnarray*}
\partial_a b & = & c^2 \left ( 3 + 2 \frac{\Delta d}{e} \right ) + O(\Delta \tau^3), \cr 
\partial_d b & = & 2c\left ( 1 + c \left ( 5 - 4 \frac{\Delta a}{\Delta d} - 2 
\frac{\Delta a}{e} + 2 \frac{\Delta d}{e} \right ) \right ) + O(\Delta \tau^3). 
\end{eqnarray*}

\subsection{Solving $\partial_a S = \partial_d S = 0$}
We now compute the partial derivatives of $S$, beginning with 
$\partial_d S$ to order $\Delta \tau$, obtaining: 
\[
\partial_d S = 2c H'(d) + 2c H'(b) - \frac{4c}{\Delta d} (H(d)-H(b)) + O(\Delta \tau^2),
\]
Setting this equal to zero and dividing by $2c$ gives 
\[
H'(d) + H'(b) - \frac{2}{\Delta d} (H(d)-H(b)) = O(\Delta \tau), 
\]
implying that 
\[
d = 1-b + O(\Delta \tau) = 1-e + O(\Delta \tau). 
\]
We therefore write 
\[
d = 1-e + d_1, \qquad \Delta d = 1-2e + d_1, 
\]
where $d_1$ is a quantity of order $\Delta \tau$. This allows us to express quantities like
$H(d)$, $H'(d)$, $H''(d)$, $\Delta b$, $H(b)$, $H'(b)$ and $H''(b)$ in terms of $d_1$. 

We next compute $\partial_a S$ to order $\Delta \tau^2$.
\begin{eqnarray*} 
\partial_a S & = & c^2 H'(a) + (1-c)^2 \partial_a b H'(b) \cr 
&& + 2 \partial_a c (H(d)-H(b) + c (H(a)+H(b)-2H(d))) \cr 
& = & c^2 H'(a) + c^2 \left ( 3 + \frac{2 \Delta d}{e} \right ) H'(b) + O(\Delta \tau^3).
\end{eqnarray*}
Setting $\partial_a S$ equal to zero gives 
\begin{eqnarray*}
H'(a) & = & - \left ( 3 +\frac{2 \Delta d}{e} \right ) H'(b) + O(\Delta \tau)\cr
& = & - \left ( 3 + \frac{2 (1-2e)}{e} \right ) H'(e) + O(\Delta \tau) \cr
& = & \left ( 1 - \frac{2}{e} \right ) H'(e) + O(\Delta \tau). 
\end{eqnarray*}
Let $a_0$ be the solution to 
\[
H'(a_0) = \left (1-\frac{2}{e} \right ) H'(e), \]
which happens to be 
\[
a_0 = \left (1 + \left ( \frac{e}{1-e} \right )^{\frac{2}{e}-1} \right)^{-1}.
\]
We then have $a = a_0 + O(\Delta \tau)$.

To complete the proof of Theorem \ref{thm:f11}, we must show that there is an 
iterative scheme for approximating $(a,b,c,d)$, and therefore $S$, for fixed values of 
$(e,\Delta \tau)$ with $\Delta \tau$ sufficiently small. As with our analysis of
${\calO}_1$, 
we do this by computing the Hessian of $S$. 

Our previously computed first derivatives of $b$ and $c$ are, to leading 
order:
\begin{eqnarray*}
\partial_a b & = & c^2 \left (3 + \frac{2\Delta d}{e} \right ) + O(\Delta \tau^3), \cr 
\partial_ac & = & -c^2 \left (\frac{2}{\Delta d} + \frac{1}{e} \right ) + O(\Delta \tau^3), \cr 
\partial_d b & = & 2c + O(\Delta \tau^2), \cr 
\partial_d c & = & - \frac{2c}{\Delta d} + O(\Delta \tau^2). 
\end{eqnarray*}

Our second partials are:
\begin{eqnarray*}
\partial^2_{aa} b & = &  O(\Delta \tau^3), \cr 
\partial^2_{ad} b & = & - 6 \frac{c^2}{e} - 12 \frac{c^2}{\Delta d} + O(\Delta \tau^3), \cr 
\partial^2_{dd} b & = & -4 \frac{c}{\Delta d} + O(\Delta \tau^2), \cr 
\partial^2_{aa} c & = & O(\Delta \tau^3), \cr 
\partial^2_{ad} c & = & \frac{c^2}{\Delta d^2} \left (10 + \frac{4\Delta d}{e} \right ) + O(\Delta \tau^3), \cr 
\partial^2_{dd} c & = & 6 \frac{c}{\Delta d^2} + O(\Delta \tau^2).
\end{eqnarray*}

Next we compute the partial derivatives of $S = c^2H(a)+2c(1-c)H(d)+(1-c)^2H(b)$. 
Since $\partial_a b$ and $\partial_a c$ are $O(\Delta \tau^2)$, we have 
\[
\partial_a S = \partial_a b H'(b) + 2 \partial_a c (H(d)-H(b)) + c^2 H'(a) + O(\Delta \tau^3).
\]
Taking another derivative with respect to $a$, 
only one term contributes at order $\Delta \tau^2$:
\[
\partial^2_{aa} S = c^2 H''(a_0) + O(\Delta \tau^3). 
\]
If instead we take a derivative of $\partial_aS$ with respect to $d$, three terms contribute at order $\Delta \tau^2$:
\begin{eqnarray*}
\partial^2_{ad} S & = & 2c \partial_d c H'(a) + \partial^2_{ad}b H'(b) + 2 \partial_d c H'(d) + O(\Delta \tau^3) \cr 
& = & - \frac{4c^2}{\Delta d} H'(a) - 6c^2 \left ( \frac{1}{e} + \frac{2}{\Delta d}\right  )
H'(b) - 2c^2 \left ( \frac{1}{e} + \frac{2}{\Delta d}\right  )H'(d) + O(\Delta \tau^3). 
\end{eqnarray*} 
Using the fact that $H'(b) \approx H'(e)$, $H'(d) \approx -H'(e)$ and 
$H'(a) \approx \left ( 1 - {2}/{e} \right )H'(e)$, this simplifies to 
\[ 
\partial^2_{ad} S = \frac{4c^2(1-e)}{e(1-2e)} H'(e) + O(\Delta \tau^3).
\]
Finally, we compute $\partial^2_{dd} S$. We have 
\begin{eqnarray*} 
\partial_d S &=& \partial_d b H'(b) + 2c H'(d) + 2 \partial_d c(H(d)-H(b)) + O(\Delta \tau^2).\cr
\partial^2_{dd} S & = & \partial^2_{bb} b H'(b) + 4 \partial_d c H'(d) + 2c H''(d) 
+ 2 \partial^2_{dd} c (H(d) - H(b)) + O(\Delta \tau^2) \cr 
& = & -\frac{4c}{\Delta d} H'(b) - \frac{8c}{\Delta d} H'(d) + 2c H''(d) + O(\Delta \tau^2) \cr
& = & 2c \left ( H''(e) + \frac{2}{1-2e} H'(e) \right ) + O(\Delta \tau^2). 
\end{eqnarray*} 
The upshot is that $\partial^2_{aa} S$ and $\partial^2_{ad}S $ are nonzero constants times
$c^2$ plus $O(\Delta \tau^3)$, while $\partial^2_{dd} S$ is a nonzero constant times 
$c$ plus $O(\Delta \tau^2)$. Replacing $c$ with $\Delta \tau/ (3e (1-2e)^2)$ and 
throwing away the error terms, we obtain a non-singular 
matrix $M_0$, with no explicit dependence on $a$ or $d$, that comes
within $$\begin{pmatrix} O(\Delta \tau^3) & O(\Delta \tau^3) \cr
  O(\Delta \tau^3) & O(\Delta \tau^2) \end{pmatrix}$$ of the actual
Hessian for all $(a,d)$ within $O(\Delta \tau)$ of  
$(a_0, 1-e)$. In particular the iteration
(\ref{notNewton}), beginning at $(a,d) = (a_0, 1-e)$, converges to the 
unique solution to $\partial_a S=\partial_d S = 0$ 
for all sufficiently small
values of $\Delta \tau$. This completes the proof of Theorem \ref{thm:f11}.

\section{Entropy estimates}\label{sec:entropy}

Having established the properties of the optimizing graphons in
${\calO}_1$ and ${\calO}_2$
we now consider the asymptotic values of the the entropy function as $\tdens$ approaches $\edens^3$, both from above and from below. 

\subsection{The ${\calO}_2$ region with $\edens=e$, $\tdens>e^3$, $e \ne 1/2$.}

Let \begin{eqnarray} \label{c12}
c_1 & = & \frac{\Delta \tau}{3e(1-2e)^2} \cr 
c_2 & = & -2 \frac{c_1 d_1}{1-2e} -c_1^2 \left ( \frac{2(a_0-e)-(1-2e)}{1-2e} + \frac{a_0-e -2(1-2e)}{e}
\right ). 
\end{eqnarray}
By our previous estimates, we have 
\[ c = c_1+ c_2 + O(\Delta \tau^3). \]
We estimate the terms in (\ref{WhatsS}) using 
\begin{eqnarray*}
H(b) & = & H(e) + H'(e) \Delta b + \frac{H''(e)}{2} \Delta b^2 + O(\Delta \tau^3) \cr 
& = & H(e) - H'(e) \left ( \frac{2c}{1-c}(1-2e+d1) + c^2 \Delta a
\right ) + 2 c^2 (1-2e)^2 H''(e)  \\
& = & H(e) -2c(1-2e)H'(e) -2cd_1 H'(e) + c^2(2 H''(e)(1-2e)^2 \cr && -H'(e)(a_0-e + 2(1-2e)))
+ O(\Delta \tau^3) \cr 
H(d) - H(b) & = & H'(1-e) d_1 - H'(e) \Delta b + O(\Delta \tau^2) \cr 
& = & H'(e) (2c(1-2e)-d_1) + O(\Delta \tau^2) \cr 
H(a)+H(b)-2H(d) & = & H(a) - H(e) + O(\Delta \tau).\cr
\end{eqnarray*} 
Plugging these terms into (\ref{WhatsS}) yields 
\begin{eqnarray*}
S & = & H(e) - 2(c_1+c_2)(1-2e+d_1)H'(e) - 2c_1d_1H'(e) \cr 
&& + c_1^2 \left ( H(a_0)-H(e) + H'(e) (2(1-2e) -(a_0-e)) + 2H''(e)(1-2e)^2 \right )
+ O(\Delta \tau^3)
\end{eqnarray*}
Substituting for $c_2$ from (\ref{c12}), we find that the $c_1d_1$ terms cancel,
leaving us with 
\begin{eqnarray*}
S & = & H(e) \!-\!2c_1(1\!-\!2e)H'(e) + 2c_1^2H'(e) \left ( 2(a_0\!-\!e) - (1\!-\!2e) + \frac{(1\!-\!2e)(a_0\!-\!e \!-\! 2(1\!-\!2e))}{e} \right ) \cr 
&& + c_1^2 \left ( H(a_0)-H(e) + H'(e) (2(1-2e) -(a_0-e))  + 2H''(e)(1-2e)^2 \right )
+ O(\Delta \tau^3) \cr 
& = & H(e) - 2c_1(1-2e) H'(e) \cr 
&& + c_1^2 \left ( H(a_0) - H(e) + H'(e) \left ( 3(a_0-e) +\frac{2(1-2e)}{e}(a_0+3e-2) + 2 H''(e) (1-2e)^2
\right ) \right ) \cr && + O(\Delta \tau^3) \cr 
&= & H(e) - \frac{2\Delta \tau}{3e(1-2e)} H'(e) \cr 
&& + \frac{\Delta \tau^2}{9e^2(1\!-\!2e)^4} \left ( H(a_0) \!-\! H(e) \!+\! H'(e) \left ( 3(a_0\!-\!e) \!+\!\frac{2(1\!-\!2e)}{e}(a_0\!+\!3e-2)
\right )\right ) \cr && + \frac{2 \Delta \tau^2 H''(e)}{9e^2(1-2e)^2} + O(\Delta \tau^3). \cr &&
\end{eqnarray*}

The linear coefficient is of course negative, since $H'(e)$ is positive for $e<
1/2$ and is negative for $e > 1/2$. The quadratic coefficient is more 
complicated but is also negative, diverging as 
$e^{-3} \ln(e)$ as $e \to 0$ and as $(e-1)^{-1}$ as $e \to 1$. See
Figure \ref{FIG:S2}.

\begin{figure}[ht]
\centering
\includegraphics[width=4in]{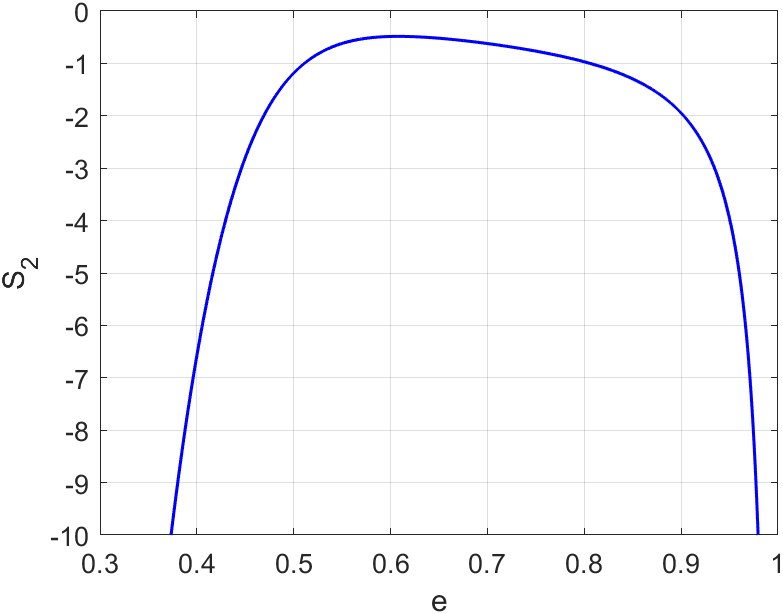}
\caption{The coefficient of $\Delta \tau^2$ as a function of $e$.}
\label{FIG:S2}
\end{figure}



\subsection{The region ${\calO}_1$ with $\edens=e$, $\tdens < e^3$, $e > 1/2$}

As before, we express the parameters $(b,c,d)$ using the coordinates $(a,\mu)$. 
For convenience, we repeat the key identities:
\begin{eqnarray}\label{cheat-sheet}
\Delta b & = & \frac{c}{1-c}\left ( \frac{c \Delta a}{1-c} - 2\mu \right ), \cr 
\Delta d & = & \mu - \frac{c \Delta a}{1-c}, \cr 
\frac{c \Delta a}{1-c} & = & - \delta + \frac{2 \mu \delta^2 - \mu^2 e}{(2e-1)\delta}
+ O(\delta^4), \cr 
c & = & \frac{\delta + \frac{\mu^2e - 2 \mu \delta^2}{(2e-1)\delta}}{2e-1+\delta-a_1}
+ O(\delta^4),
\end{eqnarray}
where $a=1-e+ a_1$. 

Since $S=H(b) + 2c(H(d)-H(b)) + c^2(H(a)+H(b)-2H(d))$, we need to evaluate $H(b)$ 
through $O(\delta^4)$, $H(d)$ through $O(\delta^3)$ and $H(a)$ through $O(\delta^2)$:
\begin{eqnarray*}
H(b) & = & H(e) + H'(e) \Delta b + \frac12 H''(e) \Delta b^2 + O(\delta^6) \cr 
H(d) & = & H(e) + H'(e) \Delta d + \frac12 H''(e) \Delta d^2 + \frac16 H'''(e)
\Delta d^3 + O(\delta^4) \cr 
H(a) & = & H(1-e) + H'(1-e)a_1 + \frac12 H''(1-e)a_1^2 + O(\delta^3) \cr 
& = & H(e) - H'(e) a_1 + \frac12 H'(e) a_1^2 + O(\delta^3).
\end{eqnarray*}
This makes 
\begin{eqnarray} \label{eq:Ssubtotal}
S & = & H(e) + H'(e) \left ( (1-c)^2 \Delta b + 2c(1-c) \Delta d - c^2 a_1\right ) \cr 
&&+ \frac{H''(e)}{2} \left ( (1-c)^2 \Delta b^2 + 2c(1-c)\Delta d^2 + c^2 a_1^2 \right ) \cr 
&& + \frac{H'''(e)}{3} c(1-c) \Delta d^3 + O(\delta^5). 
\end{eqnarray}

The coefficient of $H'(e)$ is 
$$
c(1-c)\left (\frac{c\Delta a}{1-c} - 2\mu \right ) + 2c(1-c) \left (\mu- \frac{c\Delta a}{1-c} \right ) - c^2 a_1 = c^2(2e-1 -2 a_1). 
$$ 
Squaring and expanding the formula for $c$ in (\ref{cheat-sheet}) gives 
\begin{eqnarray*} 
c^2 &=& \frac{\delta^2 + 2 \frac{\mu^2 e - 2\mu \delta^2}{2e-1}}{(2e-1+\delta - a_1)^2} + 
O(\delta^5) \cr
& = & \frac{\delta^2}{(2e-1)^2} - \frac{2 \delta^2(\delta-a_1)}{(2e-1)^3} + 
\frac{3 \delta^2(\delta-a_1)^2}{(2e-1)^4} + \frac{2(\mu^2e-2\mu\delta^2)}{(2e-1)^3}
+ O(\delta^5). 
\end{eqnarray*}
Multiplying by $(2e-1-2a_1)$, and using the fact that $a_1=-\delta + O(\delta^2)$, the coefficient of $H'(e)$ becomes 
$$
\frac{\delta^2}{(2e-1)} - \frac{2 \delta^3}{(2e-1)^2} + 
\frac{4 \delta^4}{(2e-1)^3} + \frac{2(\mu^2e-2\mu\delta^2)}{(2e-1)^2}
+ O(\delta^5). 
$$

Next we consider the coefficient of $(1/2) H''(e)$, namely 
\begin{eqnarray*} 
(1-c)\Delta b^2 + 2c(1-c) \Delta d^2 + c^2 a_1^2 & = & 
c^2 \left ( \frac{c \Delta a}{1-c} - 2\mu \right )^2 + 2c(1-c) \left ( 
\mu - \frac{c\Delta a}{1-c} \right )^2 + c^2 a_1^2 \cr 
& = & (c^2+2c(1-c)) \left ( \frac{c\Delta a}{1-c} \right )^2 - 4\mu c^2 \Delta a + c^2 a_1^2.
\end{eqnarray*}
Using the third identity of (\ref{cheat-sheet}), this reduces to 
$$
(2c-c^2) \delta^2 + 4\mu c^2(2e-1) + c^2 a_1^2 + O(\delta^5).
$$
But $a_1 \approx - \delta$ and 
$$ 
2c-c^2 = \frac{2 \delta}{2e-1} -5 \frac{\delta^2}{(2e-1)^2} + O(\delta^3), 
$$
so this becomes 
$$ 
\frac{2 \delta^3}{2e-1} - \frac{4 \delta^4}{(2e-1)^2} + \frac{4 \mu \delta^2}{2e-1}
+ O(\delta^5).
$$

Combining everything gives 
\begin{eqnarray}\label{eq:Stotal}
S & = & H(e) + \frac{\delta^2}{2e-1} H'(e) + \delta^3 \left ( 
\frac{H''(e)}{2e-1} - \frac{2 H'(e)}{(2e-1)^2} \right ) \cr 
&& + \delta^4 \left ( \frac{4 H'(e)}{(2e-1)^3} - \frac{2H''(e)}{(2e-1)^2} + \frac{H'''(e)}{3(2e-1)} \right ) \cr 
&& + \mu \delta^2 \left ( \frac{2H''(e)}{2e-1} - \frac{4H'(e)}{(2e-1)^2} \right ) 
+ \frac{2 \mu^2 e H'(e)}{(2e-1)^2} +O(\delta^5).
\end{eqnarray}
This is a quadratic function of $\mu$ that is maximized at
$$ \mu = \frac{\delta^2}{e H'(e)} \nu + O(\delta^3), $$
where 
$$
\nu = H'(e) - \left ( e - \frac12 \right ) H''(e), 
$$
in agreement with (\ref{WhatsMu}). 
Our final total, involving only the parameters $e$ and $\delta$, is then 
\begin{eqnarray*}
S &=&  H(e) + \frac{\delta^2}{2e-1}H'(e) - \frac{2\delta^3 \nu }{(2e-1)^2} \cr 
&& + \delta^4 \left ( \frac{H'''(e)}{3(2e-1)} + \frac{4 \nu}{(2e-1)^3} - \frac{2\nu^2}{e H'(e) (2e-1)^2} \right ) + O(\delta^5).
\end{eqnarray*}

Although this formula for the entropy describes a smooth function of 
$\delta$ at $\delta=0$, it is not a smooth function of $\tau = e^3-\delta^3$. The first and second derivatives of 
$s(\edens,\tdens)$ with respect to $\tdens$ are positive and diverge as 
$\delta^{-1}$ and $\delta^{-4}$, respectively, as $\delta \to 0$. 

Comparing the entropy functions in the regions ${\calO}_1$ and ${\calO}_2$, we see that 
the entropy is continuous in $\tdens$ at $\tdens=\edens^3$, but the first
derivative is not. The first derivative
approaches a finite value as $\tdens$ approaches $\edens^3$ from above, as does the second derivative, but these quantities diverge as
$\tdens$ approaches $\edens^3$ from below. This 
completes the proof of Theorem \ref{thm:entropy}.

\section{Longer cycles}\label{sec:kgon}

Finally, we turn to the edge-$k$-cycle problem for odd $k>3$
and prove Theorem \ref{thm:kgon}. 
The strategy is essentially the same as in the proof of 
Theorems \ref{thm:b11}--\ref{thm:entropy}. The part of 
the proof concerning $\tdens_k<e^k$ follows 
these steps:
\begin{enumerate}
\item We show that, for any graphon, $\tdens_k$ is bounded below
by $e^k - \|\delta g\|_2^k$, with equality only when 
$\delta g$ has rank 1 with eigenvalue $-\delta$ 
and the degree function is constant. Combined with our existing
estimates of a graphon with $\|\delta g\|_2$ of fixed size, 
this gives upper bounds on the entropy of any graphon with 
the given values of $e$ and $\delta$. These bounds, as 
a function of $e$ and $\delta$, are identical
to those obtained for triangles. 

\item A bipodal graphon with $a=1-e$ and $d=1+\delta$ comes 
within $O(\delta^3)$ of achieving that upper bound. This 
bounds the extent to which an optimal graphon can differ from 
our model graphon. These bounds are sharper than in the edge-triangle model. In the edge-triangle model, we had 
the a priori estimate 
$\int_0^1 \delta d(x)^2 \, dx = O(\delta^4)$, but with
$k$-cycles we have $\int_0^1 \delta d(x)^2 \, dx = O(\delta^{k+1})$.
When dealing with bipodal graphons, this means that the parameter
$\mu$ is a priori $O(\delta^{k/2})$ instead of $O(\delta^{3/2})$.
(The $\mu$ parameter will eventually turn out to be $O(\delta^{k-1})$.)  We also obtain
bounds on how far $\delta g$ is (in an $L^2$ sense) from having rank one. These bounds are qualitatively similar to those 
derived for the edge-triangle model. 

\item The argument that $g$ is approximately bipodal, taking on
values that are close to $e$ or $1-e$, proceeds exactly as before. 

\item The averaging argument is similar to before. If the
optimal graphon weren't already bipodal, we could average its 
value over each quadrant to get a bipodal graphon with the same value
of $\edens$, and a quantifiable trade-off between $\tdens_k$
and entropy. Moreover, a sharp trade-off would imply the same
structural properties of $g$ that we had in \Cref{sec:averaging}.

\item If we knew that the optimal graphon were bipodal, we could
maximize the entropy as in Section \ref{sec:entropy}, expressing
all parameters in terms of $\Delta a$ and $\mu$. There is 
a positive term proportional to $\delta^2 \mu$, as in the 
edge-triangle model, and a negative term proportional to 
$\mu^2 \delta^{3-k}$. The optimal value of $\mu$ is then 
$O(\delta^{k-1})$, implying that $d = e + \delta +O(\delta^{k-1})$ 
and the contribution of $\mu$ to the entropy is $O(\delta^{k+1})$.
This shows that there is at most one possiblity for a bipodal optimizer,
and gives good estimates for its parameters.

\item Combining the last two steps, if the original graphon were not
optimal, we could replace it with the averaged graphon of Step 3,
then perturb the bipodal parameters using the estimates from Step 4
to construct a graphon with the same $\edens$ and $\tdens_k$, but larger
entropy.

\end{enumerate}

In the limit as $k \to \infty$, our problem reduces to finding 
a graphon of the form $g(x,y) = e -\delta v(x)
v(y)$, where $\int_0^1 v(x)\, dx = 0$ and 
$\int_0^1 v(x)^2\, dx=1$, that maximizes the entropy for fixed
$e$ and $\delta$. This graphon is bipodal, with $d$ and 
${-c\Delta a}/({1-c})$ exactly
equal to $\delta$. 
Theorem \ref{thm:kgon} only gives $a = 1-e-\delta + O(\delta^2)$
to first order in $\delta$, but there is no obstacle to computing
higher coefficients. The parameters for the 
optimal graphon for any fixed $k$ are all within $O(\delta^{k-1})$
of the parameters for this limiting graphon. 

We now begin the proof. 

\medskip

\noindent{\bf Step 1:} Writing $g(x,y)=e+\delta g(x,y)$, we have 
\[
\tdens_k = \Tr(g^k) = \Tr((e+\delta g)^k).
\]
Expanding this out gives $e^k + \Tr(\delta g^k)$ plus a number 
of cross terms. Some of the cross terms are of the form 
$e^{k-m} \langle 1, T_{\delta g^m} 1 \rangle$, where 
1 denotes the constant function in $L^2([0,1])$. Others are 
powers of $e$ times the product of several $\langle 1, T_{\delta g^m}1
\rangle$ expressions. 

The expression $\langle 1,T_{\delta g^m} 1 \rangle$  is zero if $m=1$, is
$\int_0^1 \delta d(x)^2 \, dx$ if $m=2$, and is bounded
by $\|\delta g\|_{op}^{m-2} \int_0^1 \delta d(x)^2 \, dx$ if
$m>2$. 
Since $\delta g$ (viewed as an integral operator) 
is self-adjoint,
its operator norm is the size of its largest eigenvalue, which 
is bounded by its $L^2$ norm, which is small compared to 1. 
Of course, the product of two or more expressions of the form 
$\langle 1, T_{\delta g^m} 1\rangle $ is smaller still. 

The upshot is that 
\begin{eqnarray*}
\tdens_k & = &  e^k + \Tr(\delta g^k) + k e^{k-2}
\int_0^1 \delta d(x)^2 \, dx + \cr 
&  & + \hbox{terms small compared to} \int_0^1 \delta d(x)^2 \, dx \cr 
& \ge & e^k + \Tr(\delta g^k) + \frac{ke^{k-2}}{2} 
\int_0^1 \delta d(x)^2 \, dx.
\end{eqnarray*}
The last term is of course positive-definite. This implies that
$\| \delta g \|_2 \ge \delta$, with equality (if and) only if 
$\delta g$ is rank-1 with eigenvalue $-\delta$ and 
$\int_0^1 \delta d(x)^2 \, dx =0$. 

We have previously shown that any graphon with 
$\|\delta g\| \ge \delta$ has entropy at most 
\[ H(e) + \frac{\delta^2 H'(e)}{2e-1} + O(\delta^3),
\]
so this is also an upper bound on the entropy of any graphon
with $\tdens_k = e^k-\delta^k$. 

\smallskip

\noindent{\bf Step 2:} Consider the bipodal graphon with 
$a=1-e$, $b=e-{\delta^2}/{(2e-1})$, 
$c = {\delta}/({2e-1 + \delta})$ and $d=e+\delta$. This graphon
has edge density $e$, $k$-cycle density $e^k-\delta^k$, and 
entropy $H(e) + {\delta^2 H'(e)}/({2e-1}) + O(\delta^3)$. 
That is, it comes within $O(\delta^3)$ of saturating
the upper bound. 

This means that the errors in step 1, both those involving 
$\int_0^1 \delta d(x)^2 \, dx$ being nonzero and those from comparing 
$\Tr(\delta g^k)$ to $\|\delta g\|_2^k$, cannot cost us more 
than $O(\delta^3)$ in entropy, or equivalently cannot cause 
$\|\delta g\|_2^2$ to be greater than $\delta^2 + O(\delta^3)$. 

In particular, since an $O(\delta^3)$ change in $\| \delta g\|_2^2$
corresponds to an $O(\delta^{k+1})$ change in $\|\delta g\|_2^k$,
we must have 
$$ \int_0^1 \delta d(x)^2 \, dx = O(\delta^{k+1}).$$
Furthermore, the contribution of the small eigenvalues of 
$\delta g$ to $\| \delta g\|_2^2$ is at most $O(\delta^3)$. 
That is, $\delta g$ is within $O(\delta^{3/2})$, in an $L^2$-sense,
of being rank-1. 

\smallskip

\noindent{\bf Step 3:}
Approximate bipodality (i.e., \Cref{prop:approx-B11}) proceeds exactly as in
the $k=3$ case, because the only ingredients in the proof of \Cref{prop:approx-B11}
were the estimates from Step 1. The error terms in \Cref{prop:approx-B11}
are the same order in $\delta$ for every $k$.

\smallskip

\noindent{\bf Step 4:}
Let $h$ be the graphon obtained by averaging $g$ on podes, and write $g = h + \Delta h$.
We need to estimate the changes to entropy and $\tau_k$ in
going from $g$ to $h$, as was done in \Cref{sec:averaging} for $k=3$.
The change in entropy follows exactly as in \Cref{lem:avg-entropy-change-better-2}:
\[
    S(g) \le S(h) - C(e) \|\Delta h\|_2^2,
\]
and the inequality is sharp if and only if the non-zero values of $\Delta h$ are close to $1-2e$ on $C_1 \times C_1$
and $2e-1$ elsewhere.

To bound the change in $\tau_k$, note that $h$ (being bipodal) has rank two, and that its eigenvalues
are approximately $\lambda_1(h) \approx e$ and $\lambda_2(h) \approx -\delta$. Since $\Delta h$ integrates to zero on each pode, $\inr{w}{T_{\Delta h} w} = 0$
for every non-trivial eigenfunction $w$ of $h$.
It follows that the eigenvalues of $g = h + \Delta h$ are
\begin{align*}
    \lambda_1(g) &= \lambda_1(h) + \frac{|\inr{u_1}{T_{\Delta h} u_2}|^2}{e}(1 + O(\delta)) \ge \lambda_1(h) \\
    \lambda_2(g) &= \lambda_2(h) - \frac{|\inr{u_1}{T_{\Delta h} u_2}|^2}{\delta}(1 + O(\delta)) \ge \lambda_2(h) - \frac{\|\Delta h\|_2^2}{2\delta}(1 + O(\delta)),
\end{align*}
where $u_1$ and $u_2$ are the non-trivial (normalized) eigenfunctions of $h$.
Moreover, the second inequality is sharp if and only if $\Delta h(x,y) = u_2(x) T_{\Delta h} u_2(y) + u_2(y) T_{\Delta h} u_2(x) + w(x,y)$ for some remainder $w$ with
$\|w\|_2 = o(\|\Delta h\|_2)$.
All other eigenvalues of $h$ are zero, and so all other eigenvalues of $g$ are bounded in absolute value by $\|\Delta h\|_2 = O(\delta^{3/2})$, and also the sum of their squares is at most $\|\Delta h\|_2^2$.
In particular, $\sum_{i \ge 3} |\lambda_i(g)|^k \le \|\Delta h\|_2^{k-2} \sum_i |\lambda_i(g)|^2 \le \|\Delta h\|_2^k$.
Since $\tau_k(h) = \lambda_1^k(h) + \lambda_2^k(h)$ and $\tau_k(h) = \sum_i \lambda_i^k(g)$, we have
\begin{align*}
    \tau_k(h)
    &\le \lambda_1^k(g) + \left(\lambda_2(g) + \frac{\|\Delta h\|_2^2}{2\delta}(1 + O(\delta))\right)^k + \sum_{i \ge 3} |\lambda_i(g)|^k \\
    & \le \lambda_1^k(g) + \lambda_2^k(g) - k \frac{\delta^{k-2} \|\Delta h\|_2^2}{2}(1 + O(\delta)) + \sum_{i \ge 3} |\lambda_i(g)|^k \\
    & \le \tau_k(g) - k \frac{\delta^{k-2} \|\Delta h\|_2^2}{2}(1 + O(\delta)),
\end{align*}
and the bound is sharp if and only if $\Delta h \approx u_2(x) T_{\Delta h} u_2(y) + u_2(y) T_{\Delta h} u_2(x)$.
This is the analogue of \Cref{lem:avg-triangle-change} for general $k$.

\smallskip

\noindent{\bf Step 5:} When considering bipodal graphons,
we can compute the two eigenvalues of $g$ from the 
trace and the Hilbert-Schmidt norm: 
\begin{eqnarray*}
\lambda_1 + \lambda_2 & = & e + c \Delta a + (1-c)\Delta b \cr 
& = & e + \frac{d \Delta a}{1-c} - 2 \mu c \cr 
\lambda_1^2 + \lambda_2^2 & = & c^2 a^2 + 2c(1-c)d^2 + (1-c)^2b^2 \cr 
& = & e^2 + c^2 \Delta a^2 + 2c(1-c)\Delta d^2 + (1-c)^2 \Delta b^2\cr
& = & e^2 + \left ( \frac{c\Delta a}{1-c}\right)^2 -4c\mu
\left ( \frac{c\Delta a}{1-c}\right ) + 2\mu^2(c+c^2).
\end{eqnarray*}
From this we compute the two eigenvalues
\[
\lambda_{1,2} = \frac12 \left ( 
e + \frac{c\Delta a}{1-c} - 2\mu c \pm 
\left ( e - \frac{c\Delta a}{1-c} + 2\mu c \right ) 
\sqrt{1 + \frac{4\mu^2c(1-c)}{\left ( e - \frac{c\Delta a}{1-c} + 2\mu c \right ) ^2}}\right ).
\]
These work out to 
\begin{eqnarray}
\lambda_1 & = & e + \frac{\mu^2c(1-c)}{e+2\mu c - \frac{c\Delta a}{1-c}} + O(\mu^4\delta^2) \label{eq:k-cycle-lambda1} \\
\lambda_2 & = & \frac{c\Delta a}{1-c} - 2\mu c - 
\frac{\mu^2c(1-c)}{e+2\mu c - \frac{c\Delta a}{1-c}} + O(\mu^4\delta^2) \label{eq:k-cycle-lambda2}
\end{eqnarray}
Note that if $\mu=0$, then our eigenvalues are exactly $e$ and 
$\frac{c \Delta a}{1-c}$, so we have 
$e^k-\delta^k = \lambda_1^k+\lambda_2^k = e^k + \left ( 
\frac{c\Delta a}{1-c}\right )^k$, so $\frac{c\Delta a}{1-c}=-\delta$. We are quantifying how much they differ from
these values when $\mu$ is nonzero. However, we know that 
$\mu = O(\delta^{k/2})$, so these corrections are small. 

Setting the full expressions for 
$\lambda_1^k+\lambda_2^k$ equal to $e^k-\delta^k$, and using 
the fact that $c = {\delta}/({2e-1})+O(\delta^2)$, 
gives
\[
\left ( \frac{c\Delta a}{1-c}\right)^k = -\delta^k 
- \frac{ke^{k-2}\mu^2\delta}{2e-1} + \frac{2k\mu \delta^k}{2e-1}
+ O(\mu^2\delta^2, \mu\delta^{k+1}). 
\]
Taking $k$th roots, we then have 
\begin{equation}\label{ceq1-kgon}
\frac{c \Delta a}{1-c} = -\delta + \frac{2 \mu \delta}{2e-1}
- \frac{\mu^2 e^{k-2}}{(2e-1)\delta^{k-2}} + O(\mu\delta^2, \mu^2
\delta^{3-k}),
\end{equation}
which in turn implies that 
\begin{equation}\label{ceq2-kgon} 
c = \frac{\delta + \frac{\mu^2 e^{k-2}\delta^{2-k} - 2\mu\delta}{2e-1}}{2e-1+\delta-a_1} + O(\mu\delta^2,\mu^2\delta^{3-k}).
\end{equation}

We now expand the entropy, using Taylor series around $e$ and
$1-e$, exactly as in Section \ref{sec:entropy}. As before, 
the coefficient of $H'(e)$ is $c^2(2e-1-2a_1)$. Substituting for
$c$ using (\ref{ceq2-kgon}), the leading order 
contribution of $\mu$ to our $H'(e)$ term  becomes
$$
\frac{2\mu^2 e^{k-2}\delta^{3-k}
-4 \mu \delta^2}{(2e-1)^2} H'(e). 
$$
The coefficient of $(1/2) H''(e)$ is 
$$ (2c-c^2) \left( \frac{c\Delta a}{1-c}\right)^2 - 4\mu c^2 \Delta a + c^2a_1^2,$$
exactly as before. The leading contribution of $\mu$ is the 
second term. Finally, all contributions of $\mu$ to the coefficients of higher derivatives of $H$ are higher order. Thus,
to leading order, the contribution of $\mu$ to the entropy is 
\begin{equation}\label{eq:mu-entropy}
\frac{2\mu \delta^2 H''(e)}{2e-1} - \frac{4\mu\delta^2 H'(e)}{(2e-1)^2} + \frac{2\mu^2 e^{k-2} H'(e)}{\delta^{k-3}(2e-1)^2} = \frac{4 \mu e^{k-2}\delta^{3-k}H'(e) - \delta^2 \nu}{(2e-1)^2}. 
\end{equation}
Setting the derivative with respect to $\mu$ equal to zero then
gives
\[
\mu = \frac{\nu \delta^{k-1}}{e^{k-2} H'(e)}.
\]

That is, $\mu = O(\delta^{k-1})$ and only contributes to
the entropy at order $\delta^{k+1}$. For all computations at lower
order, and in particular for computing $a$, $b$, $c$, and $d$ 
through order $\delta^{k-2}$, we can simply set $\mu=0$. That
is, to this order the computations for the edge-$k$-cycle model
are identical to the limiting problem where $\Delta d$ and 
${-c\Delta a}/({1-c})$ are set exactly equal to $\delta$. 
In that limiting problem, $a = 1-e-\delta + O(\delta^2)$. 
This then determines $c$ and $b$. The entropy then follows from
(\ref{eq:Ssubtotal}), or equivalently from 
(\ref{eq:Stotal}) with $\mu$ set equal to zero. 

Finally, we consider the Hessian of the entropy. The 
coefficient of $\mu^2$ in expression (\ref{eq:mu-entropy}) is 
large and negative, much larger than when $k=3$. However, the
linear term is the same as when $k=3$, as are the contributions
to $S$ that don't involve $\mu$. The upshot is that 
$\partial^2_{\mu\mu}S$ is more negative than when $k=3$, while
$\partial^2_{\mu a}S$ and $\partial^2_{aa}S$ are essentially the
same. Thus the Hessian is negative definite and any optimizing bipodal
graphon is unique.

\noindent{\bf Step 6:}
Starting from the bipodal graphon obtained from averaging in Step 4,
we can parametrize it in terms of $c$, $\Delta a$, and $\mu$ as in Step 5.
Then we construct a perturbation by increasing $c$ while holding $\Delta a$
and $\mu$ constant. From~\eqref{eq:k-cycle-lambda1} and~\eqref{eq:k-cycle-lambda2},
we see that
\begin{align}
    \frac{d\lambda_1}{dc} &= O(\mu^2), \\
    \frac{d\lambda_2}{dc} &= \Delta a + O(\delta).
\end{align}
It follows that
\[
    \frac{d\tau_k}{dc} = k \lambda_1^{k-1} \frac{d \lambda_1}{dc} + k \lambda_2^{k-1} \frac{d \lambda_2}{dc} =
    - k\delta^{k-1} (2e-1) + o(\delta^{k-1}),
\]
where the last equality follows from $\Delta a \approx 1-2e$, $\lambda_2 \approx -\delta$, and $\mu^2 = O(\delta^k)$.

On the other hand, the change in entropy is ${dS}/{dc} = [{2\delta}/({2e-1})]D(1-e)(1 + o(1))$, exactly as
in \Cref{sec:improvement}. That is, starting from the averaged graphon $h$ we can trade off entropy for $k$-cycles at the rate
\[
    \frac{dS}{d\tau_k} = -(1 + o(1)) \frac{2 D(1-e)}{k \delta^{k-2} (2e-1)^2} = - (1 + o(1))\frac{2 C(e)}{k \delta^{k-2}}.
\]
The rest of the proof of Theorem \ref{thm:kgon} for $\tau_k<e^k$ follows exactly as in the $k=3$ case: if the perturbation
of the averaged graphon $h$ does not contradict the optimality of $g$, both the $k$-cycle change and entropy change inequalities
of Step 4 must be sharp. It follows that $\Delta h$ approximately takes the prescribed values and that
$\Delta h \approx u_2(x) T_{\Delta h} u_2(y) + u_2(y) T_{\Delta h} u_2(x)$; but as in \Cref{sec:improvement} these two
properties contradict the definitions of $C_1$ and $C_2$ in \Cref{prop:approx-B11}.
The rest of the proof of Theorem \ref{thm:kgon} for $\tau_k<e^k$ continues as in the case $k=3$.

\medskip

We now turn to $\tau_k>e^k$. 
A graph is said to be {\em $2$-star-like} if all of its vertices 
have degree 1 or 2. In particular, all $k$-cycles are 
2-star-like. Theorem 1.1 of \cite{KRRS2} then states that 
the optimizing graphon for $\tau_k$ slightly above $e^3$ is 
bipodal with the parameters $(a,b,c,d)$ 
taking the approximate forms
indicated in Theorem \ref{thm:kgon}, only with errors that are 
$o(1)$ or $o(\Delta \tau)$ instead of the $O(\Delta \tau)$
or $O(\Delta \tau^2)$ errors claimed in Theorem \ref{thm:kgon}.

All that remains is to sharpen the estimates and compute the
entropy. We follow the same procedure as in 
Section \ref{sec:f11}, only with the expressions (\ref{eq:DeltaTau1}) and (\ref{eq:DeltaTau2}) for 
$\Delta \tau$ replaced with 
\begin{eqnarray*}
\Delta \tau_k & = & \left(\frac{c}{1-c}\right) ke^{k-2}\mu^2
+ ke^{k-3}c^2 \mu^2 (\Delta a + \Delta b - 2 \Delta d) \cr 
& & + ke^{k-4}c^2\mu^2\left (c(\Delta a-\Delta d)^2 + (1-c)(\Delta d
-\Delta b)^2 \right ) \cr 
&&+ \frac{k(k-5)e^{k-4}c^2\mu^4}{2(1-c)^2} + O(c^3).
\end{eqnarray*}
The first three terms are the multiples of $\langle 1, T_{\delta g^2} 1
\rangle$, 
$\langle 1, T_{\delta g^3} 1 \rangle$ and $\langle 1, T_{\delta g^4} 1
\rangle$, respectively, 
discussed in Step 1 of the proof for $\tau<e^k$. 
The last term involves the product of two factors of 
$\langle 1, T_{\delta g^2} 1 \rangle$ and only occurs when $k \ge 7$. 
All other
terms in the expansion of $\Delta \tau_k$, 
including $\Tr(\delta g^k)$, are of higher order. 

The upshot is that the calculation is the same as for triangles,
only with a coefficient of $ke^{k-2}$ instead of $3e$ in
the leading term, and with different $O(c^2)$ terms.
Adjusting the $O(c^2)$ terms does not affect the computation of $(a,b,c,d)$
to the order specified in Theorem \ref{thm:kgon}. 
The change from $3e$ to $ke^{k-2}$ in the leading term 
does change  the $O(\Delta \tau)$
terms in the expansions of $c$ and $b$, but does not affect
the leading expressions for $d$ or $a$, or the fact that 
the errors are indeed $O(\Delta \tau)^2)$ for $b$ and $c$
and are $O(\Delta \tau)$ for $a$ and $d$. Plugging these
values of $(a,b,c,d)$ into the formula for the entropy then
yields the estimate (\ref{eq:entropy-kgon}). 

\subsection*{Data availability statement:}
Data sharing not applicable to this article as no datasets were generated or analysed during the current study.

\end{document}